\documentclass[a4paper,11pt]{amsart}

\usepackage{amssymb}
\usepackage{tikz}

\usepackage{bbm}

\usepackage[hmargin=3.5cm,foot=0.7cm]{geometry}
\usepackage[utf8x]{inputenc}

\usepackage[english]{babel}

\usepackage{tikz}
\usetikzlibrary{matrix,arrows}
\usepackage{enumitem}

\usepackage[initials]{amsrefs}

\newtheorem{theorem}{Theorem}[section]
\newtheorem*{theorem*}{Theorem}

\theoremstyle{plain}
\newtheorem{corollary}[theorem]{Corollary}
\newtheorem{lemma}[theorem]{Lemma}
\newtheorem{proposition}[theorem]{Proposition}
\newtheorem{conjecture}[theorem]{Conjecture}
\newcounter{mt}

\newtheorem{MainTheorem}[mt]{Theorem}

\newtheorem*{lemma*}{Lemma}
\newtheorem*{question*}{Question}

\theoremstyle{definition}
\newtheorem{definition}[theorem]{Definition}
\newtheorem{example}[theorem]{Example}
\newtheorem{remark}[theorem]{Remark}

\newcommand{\transv}{\mathrel{\text{\tpitchfork}}}
\makeatletter
\newcommand{\tpitchfork}{%
  \vbox{
    \baselineskip\z@skip
    \lineskip-.52ex
    \lineskiplimit\maxdimen
    \m@th
    \ialign{##\crcr\hidewidth\smash{$-$}\hidewidth\crcr$\pitchfork$\crcr}
  }%
}
\makeatother

\newcommand{\PP}{\mathbb{P}}
\newcommand{\RR}{\mathbb{R}}

\newcommand{\calK}{\mathcal{K}}

\newcommand{\calP}{\mathcal{P}}

\newcommand{\calV}{\mathcal{V}}

\newcommand \lcur{[\![}
\newcommand \rcur{]\!]}

\newcommand{\dt}{\left.\frac{d}{dt}\right|_{t=0}}

\DeclareMathOperator{\Val}{Val}
\DeclareMathOperator{\SO}{SO}

\DeclareMathOperator{\vol}{vol}

\DeclareMathOperator{\supp}{supp}

\DeclareMathOperator{\Grass}{Gr}
\DeclareMathOperator{\AGrass}{AGr}
\DeclareMathOperator{\Sym}{Sym}
\DeclareMathOperator{\sign}{sign}

\DeclareMathOperator{\Image}{Image}

\DeclareMathOperator{\Klain}{Kl}

\DeclareMathOperator{\Ker}{Ker}
\DeclareMathOperator{\Dens}{Dens}

\DeclareMathOperator{\nc}{nc}

\DeclareMathOperator{\codim}{codim}
\DeclareMathOperator{\rank}{rank}

\newcommand{\pder}[2][]{\frac{\partial#1}{\partial#2}}

\def\note#1{\ifvmode\leavevmode\fi\vadjust{\vbox to0pt{\vss
			\hbox to 0pt{\hskip\hsize\hskip1em
				\vbox{\hsize3.5cm\small\raggedright\pretolerance10000
					\noindent #1\hfill}\hss}\vbox to8pt{\vfil}\vss}}}

\title[]{The Weyl principle on the Finsler frontier} 
\author{Dmitry Faifman}
\author{Thomas Wannerer}
\email{dmitry.faifman@umontreal.ca}
\email{thomas.wannerer@uni-jena.de}
\address{Fakult\"at f\"ur Mathematik und Informatik, Friedrich-Schiller-Universit\"at Jena, 07743 Jena, Germany}
\address{Centre de recherches math\'ematiques, Universit\'e de Montr\'eal, Pavillon Andr\'e-Aisenstadt,
	2920, Chemin de la tour, bur. 5357 Montréal (Québec) H3T 1J4 Canada}

\thanks{DF partially supported by NSERC Discovery Grant.}
\thanks{TW supported by DFG grant WA 3510/1-1.}
\date{\today}

\begin{document}

	\begin{abstract}
			Any Riemannian manifold has a canonical collection of valuations (finitely additive measures) attached to it, known as the intrinsic volumes or Lipschitz-Killing valuations. They date back to the remarkable discovery of H. Weyl that the coefficients of the tube volume polynomial are intrinsic invariants of the metric. As a consequence, the intrinsic volumes behave naturally under isometric immersions. This phenomenon,  subsequently observed in a number of different geometric settings, is commonly referred to as the Weyl principle.
			In general normed spaces, the Holmes-Thompson intrinsic volumes naturally extend the Euclidean intrinsic volumes.
			The purpose of this note is to investigate the applicability  of the Weyl principle to Finsler manifolds. We show that while in general the Weyl principle fails, a weak form of the principle unexpectedly persists in certain settings.
\end{abstract}
\maketitle
\section{Introduction}
The relation between the intrinsic and extrinsic geometric properties of submanifolds of a finite-dimensional normed space is not well understood, see, e.g., \cites{burago-ivanov,ivanov, alvarez_problems}.  It is  not even clear which intrinsic and extrinsic quantities can be related to each other: D.~Burago and S.~Ivanov \cite{burago-ivanov} proved the striking  result that
 while geodesics on Finslerian saddle surfaces exhibit the same global behaviour as in the Riemannian case, there seem to be no special local properties of saddle surfaces that imply these global results, as every abstract Finsler surface can be locally embedded as a saddle surface in some $4$-dimensional normed space.

In convex geometry, the central global invariants of a convex body $K$ in Euclidean space $\RR^n$ are its \emph{intrinsic volumes}. They arise as the (suitably normalized) coefficients in Steiner's formula for the volume of an $\varepsilon$-neighborhood of $K$. The intrinsic volumes of $K\subset \RR^m\subset \RR^n$  do not depend on the ambient space, i.e. are the same whether computed in $\RR^m$ or in $\RR^n$. This fact was generalized to compact smooth submanifolds $M$ of $\RR^n$  by H.~Weyl \cite{weyl},  who expressed the intrinsic volumes in terms of polynomial invariants of the Riemannian curvature tensor of $M$. As a function on $\calK(\RR^n)$, the space of convex bodies in $\RR^n$, the $k$\textsuperscript{th} intrinsic volume $\mu_k$ possesses a strikingly simple characterization: $\mu_k$ is the unique even, $k$-homogeneous, translation-invariant,  continuous valuation, i.e., satisfies 
$$\mu_k(K\cap L)= \mu_k(K)+ \mu_k(L)-\mu_k(K\cap L)$$
whenever $K\cup L$ is convex, with the property that 
$$\mu_k|_E = \vol_E$$
for every $k$-dimensional linear subspace $E\subset \RR^n$, where $\vol_E$ denotes the usual Lebesgue measure on $E$.
This follows at once from the Klain embedding theorem \cites{klain_short,alesker_mcmullen} 

The Holmes-Thompson intrinsic volumes naturally extend the notion of the Euclidean intrinsic volumes to general smooth Minkowski spaces, i.e. finite-dimensional normed spaces with a smooth unit ball $B_F$ of positive gaussian curvature. 
There are several natural ways to normalize the Lebesgue measure on a normed space $V$, see e.g. \cite{alvarez-thompson}, or the book by A.C.~Thompson \cite{Thompson:MG}. 
From the perspective of integral geometry, as well as symplectic geometry, the natural definition is the \emph{Holmes-Thompson volume}, given by setting \[\vol^{HT}(B_F)=\frac{1}{\omega_n}\vol_{2n}(B_F\times B_F^\circ )\] where $B_F^\circ\subset V^*$ is the dual convex body,  $\omega_n$ the volume of the Euclidean unit ball, and $\vol_{2n}$ the Liouville volume for the standard symplectic form on $V\times V^*$. The  normalizing factor  $1/\omega_n$ is chosen so that $\vol^{HT}$ extends the standard Euclidean definition of volume. If $E\subset V$ is a linear subspace, it inherits a norm and therefore the corresponding Holmes-Thompson Lebesgue measure $\vol^{HT}_E$. More generally, given a $k$-dimensional submanifold $X\subset V$, we get the Holmes-Thompson volume measure $\vol^{HT}_X$ on $X$, since an absolutely continuous measure on a manifold is canonically identified with a continuous choice of a Lebesgue measure on its tangent spaces.
It was shown by J.C.~\'Alvarez Paiva and E.~Fernandes \cites{alvarez-fernandes} and A.~Bernig \cite{bernig_HT} that the $k$-dimensional Holmes-Thompson volume can be extended to a valuation on $\calK(V)$. Namely,  one can define the $k$\textsuperscript{th} \emph{Holmes-Thompson intrinsic volume} as the unique even, $k$-homogeneous, translation-invariant, continuous valuation satisfying 
$$\mu_k^F|_E = \vol^{HT}_E$$
for every $k$-dimensional linear subspace $E\subset V$.
The uniqueness of the extension of $\vol^{HT}_k$ follows as in the Euclidean case from the  Klain embedding theorem. As a consequence of this uniqueness, the Holmes-Thompson intrinsic volumes are intrinsic in the following sense: If $K\subset U\subset V$ is a convex body contained in a linear subspace $U\subset V$ with the induced norm, then the value of  $k$th Holmes-Thompson intrinsic volume  on $K$ is the same whether computed in $(V,F)$ or $(U,F|_U)$.  We will refer to this property as the \emph{linear Weyl principle}. As their Euclidean counterparts, the Holmes-Thompson intrinsic volumes may be evaluated on much more general objects than convex bodies, in particular on compact smooth submanifolds with boundary (see Section~\ref{sec:background}).
 
A guiding idea in valuation theory, usually referred to as the \emph{Weyl principle}, has recently emerged: when restricted to a subspace $X$, valuations  on an
ambient space $M$ may often be reconstructed from the basic geometry of $X$
induced by the immersion into $M$. 
The prototypical application is Weyl's theorem described above, but more recently several other instances have  surfaced:  immersions of contact and dual Heisenberg manifolds and restriction of their canonical valuations \cite{faifman_contact};  restriction of the invariant valuations on complex space forms  to totally real submanifolds \cite{bfs}*{Lemma 4.4}; and  immersions of pseudo-Riemannian manifolds and the restriction of their canonical valuations \cite{BFaS}.

Approaching the question of intrinsic and extrinsic geometric properties of submanifolds of a finite-dimensional normed space from an integral geometric perspective, J.H.G.~Fu put forward the conjecture \cite{alesker_fu_barcelona}*{p.~107}  that the Weyl principle also applies to the  Holmes-Thompson intrinsic volumes. More precisely, let $V$ be an $n$-dimensional real vector space, and let $F\colon V\to \RR$ be a smooth norm on $V$. The pullback of a valuation $\mu$ on $V$ under a smooth embedding $e\colon M\to V$ is defined by 
$$ e^* \mu(A)= \mu(e(A)),$$
where $A\subset M$ is a compact submanifold with corners (see Section~\ref{sec:background}).  This definition extends naturally to smooth immersions.
The conjecture of J.H.G.~Fu can now be stated as follows.
\begin{conjecture}[{\cite{alesker_fu_barcelona}*{p.~107}}]\label{conj:fu} Let $e_j\colon M\to V_j$, $j=1,2$ be smooth immersions of a smooth manifold $M$ into the normed spaces $(V_j,F_j)$. If $e_1^*F_1 = e_2^*F_2$, then
	$$e_1^* \mu_k^{F_1} = e_2^* \mu_k^{F_2}. $$
\end{conjecture}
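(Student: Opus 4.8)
The plan is to reduce the conjecture to an explicit local formula for the pullback valuation and then to analyze whether the resulting data on the cosphere bundle of $M$ is determined by the pullback norm alone. For $0<k<n$ the valuation $\mu_k^F$ is smooth, even and translation-invariant, hence by Alesker's theory it is represented by integration of a translation-invariant differential form $\omega_k^F$ on the cosphere bundle $S^*V$ against the conormal cycle,
$$ \mu_k^F(K)=\int_{\nc(K)}\omega_k^F , $$
the bulk term being absent for degree reasons when $k<n$. A convenient model for $\omega_k^F$, or for a Rumin primitive of it, is the \'Alvarez Paiva--Fernandes and Bernig symplectic description of the Holmes-Thompson valuations in terms of the canonical symplectic form on $T^*V$ and the dual unit ball $B_F^\circ$. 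For a smooth immersion $e\colon M^m\to V$ and a compact domain with corners $A\subseteq M$ one then has $e^*\mu_k^F(A)=\int_{\nc(e(A))}\omega_k^F$, with the usual interpretation of $\nc$ for immersions.

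The first real step is a fibre integration along the conormal directions, of the kind used in the recent analysis of the Weyl principle for contact and pseudo-Riemannian manifolds. The conormal cycle $\nc(e(A))\subset S^*V$ fibres, over the $M$-interior of $A$, over the unit conormal bundle of $M$ in $V$ with $(n-m-1)$-sphere fibres, together with the corresponding contributions over $\partial A$; pushing $\omega_k^F$ forward along these fibres produces a form $\Omega_k^F$ on $S^*M$ that represents $e^*\mu_k^F$. Carrying out this push-forward, I expect $\Omega_k^F$ to come out as a universal polynomial in the second fundamental form $\mathrm{II}$ of $e$ whose coefficients are curvature-type integrals over the conormal sphere built from finitely many derivatives of the ambient co-norm $F^*$ in the conormal directions. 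The conjecture then becomes the statement that $\Omega_k^{F_1}$ and $\Omega_k^{F_2}$ represent the same valuation on $M$ whenever $e_1^*F_1=e_2^*F_2$.

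At this point it pays to separate the cheap part from the essential one. The hypothesis $e_1^*F_1=e_2^*F_2$ fixes the norms $F_j$ on the tangent planes $e_{j*}T_yM$, and hence, through the induced identifications of $T_yM$ and of the dual conormal spheres, it forces the Klain sections of the two valuations to agree; this is precisely the linear Weyl principle applied tangent plane by tangent plane. The real content of the conjecture is that the remaining, non-Klain part of $\Omega_k^F$, which carries the dependence on $\mathrm{II}$ and on the transverse jets of $F^*$, is also intrinsic. In the Riemannian case this is exactly Weyl's theorem: the Gauss equation rewrites the relevant polynomial expressions in $\mathrm{II}$ as polynomials in the intrinsic curvature tensor, while the conormal-sphere integrals collapse to the universal constants that make the Lipschitz-Killing valuations intrinsic. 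For a general smooth norm there is no visible analogue of the Gauss equation and, decisively, the conormal-sphere averages are now taken against the anisotropic measure induced by $F^*$ rather than against the rotation-invariant one.

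This last mismatch is where I expect the argument to run aground, and the strategy is to meet it head on in the smallest nontrivial configuration: the restriction of $\mu_k^F$ to a hypersurface, or to a surface, where $\mathrm{II}$ is a single quadratic form and $\Omega_k^F$ can be written out in closed form. Either one exhibits the intrinsic rewriting there, which would strongly support the conjecture and reveal the general mechanism, or one produces two immersions of a fixed $M$ into (possibly different) normed spaces with equal pullback norms but manifestly different $\Omega_k^F$. In view of the absent Gauss equation and the anisotropy of the conormal averaging I expect the latter, so that the conjecture as stated fails; the same computation should, however, single out the particular combination of $\mathrm{II}$ and the conormal jets of $F^*$ that does survive, which is the natural candidate for a weak form of the Weyl principle.
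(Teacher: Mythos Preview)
Your fibre-integration strategy and your expectation that the conjecture fails are both correct and broadly aligned with the paper. The genuine gap is your choice of test configuration: surfaces and hypersurfaces are exactly where the conjecture does \emph{not} fail. For $\dim M\le 2$ it holds outright, since $\mu_m^F|_M$ is the Holmes--Thompson volume and $\mu_{m-1}^F(X)=\tfrac12\vol^{HT}(\partial X)$, both manifestly intrinsic; and in codimensions $1$ and $2$ the paper proves the weak Weyl principle unconditionally while leaving the full conjecture open there. Counterexamples first appear for $\dim M\ge 3$ and large codimension (the smallest is $M^3\subset\RR^6$). In codimension one the second fundamental form is scalar-valued and the anisotropy you identify cannot be decoupled from the tangential data, so your proposed experiment would come back positive and mislead you.

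Two further points where the paper's argument differs from your outline. First, the paper reduces everything to $k=1$ via $\mu_k^F=c_k(\mu_1^F)^k$ in the Alesker algebra, which commutes with restriction; this is a substantial simplification you do not invoke. Second, the actual mechanism of failure is sharper than ``anisotropic conormal averaging'': after writing the defining current of $\mu_1^F$ as $c_n\,\mathcal C^{-1}(F)\,\alpha\wedge\vol_{S^{n-1}}$ and pushing forward, the coefficient at $(p,u)\in SM$ involves $\partial F/\partial N$ for $N\in\Image h_p$, whereas the data $F|_M$ only controls such derivatives for $N$ in the smaller space $\Image h_p(u,\cdot)$, which equals the normal part of the image of the tautological map $\theta\colon SM\to S^{n-1}$ at $(p,u)$. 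When $\Image h_p\supsetneq\Image h_p(u,\cdot)$ one can perturb $F$ in a direction $h_p(e_1,e_1)$ transverse to $\theta(SM)$, keeping $F|_M$ fixed; an inverse hemispherical transform argument then shows the pulled-back valuation genuinely changes. This rank comparison forces $\dim\mathcal O_p^2M>2m$ and is invisible in codimension $\le 2$, which is why your test cases cannot detect it.
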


When $\dim M\leq 2$, the conjecture is easily seen to hold (Theorem~\ref{thm:dim2}). The first unknown case is $\dim M=3$, $\dim V=4$.

D.~Burago and S.~Ivanov proved \cite{burago_ivanov_nash} a Burstin-Janet-Cartan theorem  in the Finsler setting, namely that every Finsler manifold can be locally isometrically embedded in a normed space. Thus a positive answer to the conjecture would allow to define the Holmes-Thompson valuations on arbitrary Finsler manifolds.
	
\subsection{Main results}
 In loose terms, the Finslerian setting allows an extra freedom compared with the Riemannian setting. Namely, we may vary not only the immersion, but also the ambient normed space itself, so long as the restricted norm is unchanged.

We refute Conjecture \ref{conj:fu} in general, while simultaneously proving that several weakened versions of the Weyl principle persist in the Finsler setting.  We find that in some settings, keeping the  immersion fixed, the restricted Holmes-Thompson intrinsic volumes are independent of the ambient norm. To give precise statements, let us introduce some terminology. All norms are assumed to have a smooth and positively curved unit ball.

\begin{definition}Let $M\subset V$ be an $m$-dimensional manifold without boundary, immersed in an $n$-dimensional linear space $V$. We say that $M$ satisfies the \emph{weak Weyl principle (WWP) at $p\in M$} if whenever $F, F'$ are two norms on $V$ that coincide on $\{T_xM:x\in U\}$ for some neighborhood $U$ of $p$, it also holds that $\mu^F_k|_U=\mu^{F'}_k|_U$, for all $k\geq 1$. We say that $M\subset V$ satisfies \emph{WWP} if it holds for all $p\in M$.

\end{definition}

Our first main result states that the weak Weyl principle holds unconditionally for small codimension, and furthermore holds generically when the codimension is not too large, as follows.
 \begin{MainTheorem}\label{thm:special_dimension}
	Fix a manifold $M$, assume $\dim M=m$ and $\dim V=n$. If $n\leq m+2$ then WWP holds for any immersion $e:M\looparrowright V$. If $m+3\leq n\leq 2m$, then WWP holds for a dense residual set of immersions $M\looparrowright V$ in the Whitney topology.
\end{MainTheorem}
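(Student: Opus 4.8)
The plan is to combine an integral-geometric formula for the restriction of $\mu_k^F$ — one that isolates exactly how the answer depends on $F$ — with a transversality argument, and to reduce everything to a statement about a dense subset of points of $M$. I would begin with two soft reductions. First, for a fixed neighbourhood $U\ni p$ the set of admissible norms agreeing with a given $F$ on the cone $\bigcup_{y\in U}T_yM$ is convex. Second, two smooth valuations on $M$ agreeing on a dense open subset of $U$ agree on $U$, since smooth valuations are represented by differential forms. Consequently it suffices to establish WWP at a dense set of points: if WWP holds at a dense set, then for any $F,F'$ agreeing on $\{T_yM:y\in U\}$ the valuations $e^*\mu_k^F$ and $e^*\mu_k^{F'}$ agree near each such point, hence on a dense open subset of $U$, hence on $U$.

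Next, the integral-geometric input. By \'Alvarez Paiva--Fernandes \cite{alvarez-fernandes} and Bernig \cite{bernig_HT}, $\mu_k^F$ has a Crofton representation $\mu_k^F(K)=\int_{\Grass_{n-k}(V)}\bigl(\int_{V/E}\chi(K\cap(E+y))\,dy\bigr)\phi^F(E)\,dE$, in which the inner template is $F$-independent and $\phi^F$ is the inverse cosine transform of the Holmes--Thompson Klain function $E_0\mapsto$ (density of $\vol^{HT}_{E_0}$) on $\Grass_k(V)$. Pulled back to a compact $A\subset M$, the inner template becomes a smooth valuation on $M$ determined by the $2$-jet of $e$ along $A$ (its tangent planes and second fundamental form $\mathrm{II}$). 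Transposing through the cosine transform, one obtains $e^*\mu_k^F(A)=\int_{\Grass_k(V)}(\text{density of }\vol^{HT}_{E_0})\,\Psi_A(E_0)\,dE_0$ with $\Psi_A$ an $F$-independent signed density on $\Grass_k(V)$ built from the extrinsic geometry of $e|_A$. Since $\vol^{HT}_{E_0}$ depends only on $F|_{E_0}$, and $F|_{E_0}=F'|_{E_0}$ whenever $E_0\subset T_yM$ for some $y\in U$, this yields the key criterion: \emph{WWP holds at $p$ provided $\operatorname{supp}\Psi_A\subset\overline{\Gamma_U}$ for all small $U\ni p$ and all compact $A\subset U$}, where $\Gamma_U=\{E_0\in\Grass_k(V):E_0\subset T_yM\text{ for some }y\in U\}$.

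The criterion is automatic once $\overline{\Gamma_U}=\Grass_k(V)$, and $\Gamma_U$ is dense in $\Grass_k(V)$ as soon as the evaluation map $TM\to V$, $(x,v)\mapsto De_x(v)$, is a submersion somewhere over $U$ — in which case already $F|_{\bigcup_{y\in U}T_yM}$ determines $F$ by continuity and there is nothing to prove. Call such a point \emph{ample}; ampleness is an open condition on the $2$-jet of $e$, and, the domain of the evaluation map having dimension $2m$, it can occur only when $n\le 2m$. If $x$ fails to be ample on an open set $U_0$, then $\bigcup_{y\in U_0}T_yM$ lies in a closed cone spanning a proper subspace $W\subsetneq V$; as each $T_yM$ is linear this forces $T_yM\subset W$ for all $y\in U_0$, so $e(U_0)$ lies in an affine translate of $W$ and the linear Weyl principle recalled in the introduction reduces $\mu_k^F|_{U_0}$ to $(W,F|_W)$ — an induction on $n$. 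What is left is the hard case: $\Gamma_U$ not dense while $M$ is not flattened into a proper subspace (think of space curves, or low-rank configurations with $\bigcup T_yM$ of measure zero yet not contained in a subspace), where one must show directly that the curvature terms entering the template, hence $\Psi_A$, do not carry its support outside $\overline{\Gamma_U}$. This is precisely where the codimension enters: for $n\le m+2$ the second fundamental form $\Sym^2 T_xM\to V/T_xM$ is valued in a space of dimension $\le 2$, which rigidly caps the curvature corrections and lets one check, by a direct rank analysis of the contractions $v\mapsto\mathrm{II}_x(v,\cdot)$, that on a dense subset of $M$ every point is either ample or flattened.

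For $m+3\le n\le 2m$ the argument is instead that the non-ample $2$-jets form a closed semialgebraic stratum of the $2$-jet bundle of fibrewise codimension exceeding $m$ exactly in the range $n\le 2m$, so Thom's jet transversality theorem produces a dense residual set of immersions in the Whitney topology, all of whose points are ample, and the dense-subset reduction of the first step then gives WWP everywhere on $M$. The main obstacle is the hard case of the previous paragraph: understanding how the extrinsic second fundamental form, entering via the Crofton template, interacts with the (inverse) cosine transform so as to confine $\operatorname{supp}\Psi_A$ to $\overline{\Gamma_U}$ whenever this is not already forced by ampleness or by the linear Weyl principle; a secondary technical point is the exact computation of the fibrewise codimension of the non-ample jet stratum, which is what pins down the numerical threshold $n\le 2m$.
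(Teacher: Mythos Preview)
Your transversality argument for $m+3\leq n\leq 2m$ is essentially the paper's (Lemma~\ref{lem:generic_perturbation} plus Proposition~\ref{prop:weyl_equivalent_to_h}), modulo the overclaim that \emph{all} points become ample; one only gets ampleness on a dense open subset, and your density reduction then closes the gap.

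The real problem is codimension two. Your dichotomy ``on a dense subset every point is ample or flattened'' is false: take $M=\{(x_1,\dots,x_m,f(x_1),g(x_1))\}\subset\RR^{m+2}$ for generic $f,g$. Here $h_p$ has rank one everywhere, so $\rank h_p(u,\cdot)\leq 1<2$ and no point is ample; yet the tangent planes span $\RR^{m+2}$, so $M$ lies in no hyperplane. This is precisely the second alternative of Lemma~\ref{lem:codimension2}. The paper does not handle it by any support argument. Instead it reduces to $\mu_1^F$ via the Alesker product (Proposition~\ref{prop:k=1_is_enough}), computes the coefficients $A^k_{\sigma\tau}$ of the defining current of $\mu_1^F|_M$ explicitly as integrals involving $k\times k$ minors $\det h^N_{\sigma\tau^c}$ (eq.~\eqref{eq:general_coefficient}), and observes that when $\rank h_p=1$ these minors vanish for $k\geq 2$; the surviving $A^0$ and $A^1_{\sigma\tau}$ are then shown to be intrinsic by separate, direct arguments (Corollary~\ref{cor:a0_intrinsic}, Lemma~\ref{lem:first_coefficients_determined}). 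Your support criterion cannot substitute for this: $\Psi_A$ is obtained from the Crofton template by the \emph{inverse} cosine transform, a non-local operator that does not shrink supports, so $\supp\Psi_A\subset\overline{\Gamma_U}$ will in practice only hold when $\overline{\Gamma_U}=\Grass_k(V)$, i.e., in the ample case you already have. The mechanism in the hard codimension-two case is not support confinement but algebraic vanishing of the higher-order curvature contributions.
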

Here by a residual set we mean the countable intersection of open dense sets.

Our second main theorem  shows that WWP fails for generic submanifolds of large codimension. We henceforth make use of an auxiliary Euclidean structure on $V=\RR^n$, and write $S(V)=S^{n-1}$ for the unit sphere.  We get an induced Riemannian structure on $M$, and define the tautological map $\theta:SM\to S^{n-1}$ by $\theta(x,v)=v$, where $SM$ denotes the unit tangent bundle.

\begin{definition}\label{def:directionally_regular}
	A manifold $M\subset V$ is \emph{directionally regular} at $(p,u)\in SM$ if the differential $d_{(p,u)}\theta\colon T_{(p,u)} SM \to T_u S^{n-1}$ has full rank, namely $\min(2m-1, n-1)$.
\end{definition}

Recall that the  \emph{$2$\textsuperscript{nd} osculating space} $\mathcal O_p^2M\subset T_p V$  at $p\in M$ is spanned by the velocity and acceleration vectors of all curves at $p$. The Euclidean orthogonal complement of $T_pM$ in $\mathcal O_p^2M$ is the \emph{$1$\textsuperscript{st} normal space} $\mathcal N_p^1M$ (see, e.g., \cite{spivak4}).
\begin{MainTheorem}\label{thm:false_in_general}
	Assume $m\geq 3$, and $M^m\subset V$ is directionally regular at $(p,u)\in SM$. Then WWP for $M$ fails at $p$ if $\dim \mathcal O^2_p M>2m$.
\end{MainTheorem}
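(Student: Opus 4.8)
The plan is to exhibit two smooth norms $F,F'$ on $V$ that agree on $\bigcup_{x\in U}T_xM$ for some neighborhood $U$ of $p$, yet satisfy $\mu_k^F|_U\neq\mu_k^{F'}|_U$ for some $k$; necessarily $1\le k\le m-1$, since $\mu_m^F|_M=\vol^{HT}_M$ depends only on $F|_{TM}$ and $\mu_k^F|_M=0$ for $k>m$. Observe that $\bigcup_{x\in U}T_xM=\operatorname{cone}\bigl(\theta(SU)\bigr)$ because $F$ is $1$-homogeneous, so the task is to perturb $F|_{S^{n-1}}$ off the submanifold $\theta(SU)\subset S^{n-1}$ without changing any $e^*\mu_k^F|_U$ --- and to prove this is impossible. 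Everything rests on a local formula for $e^*\mu_k^F$ near $p$.

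\emph{Step 1 (restriction formula).} First I would express $\mu_k^F$ ($k<n$) through its normal-cycle form on $S^*V$ --- or, equivalently for these even valuations, through a (possibly signed) Crofton measure $\nu_k^F$ on $\AGrass_{n-k}(V)$ whose disintegration over $\Grass_{n-k}(V)$ is an inverse cosine transform of the Klain density $E\mapsto\vol^{HT}_E$ on $\Grass_k(V)$. For domains $A\subset M^m$ one then has $e^*\mu_k^F(A)=\int\chi\bigl(e(A)\cap\bar E\bigr)\,d\nu_k^F(\bar E)$, and expanding this valuation on $M$ into homogeneous components, the degree-$k$ part is governed by $F|_{TM}$ alone (it is part of the intrinsic Holmes--Thompson structure of $M$), whereas each component of degree $j<k$ is a curvature measure whose integrand at $p$ is a universal expression pairing the second fundamental form $h_p$ of $M$ with the $2$-jet of $F$ at points $v\in S_pM$. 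Modelling $M$ near $p$ by the paraboloid $w\mapsto w+\tfrac12 h_p(w,w)$ inside $\mathcal O^2_pM=T_pM\oplus\mathcal N^1_pM$ makes visible that this integrand sees $F$ on a neighborhood of $S_pM$ within $\mathcal O^2_pM$; in particular it involves derivatives of $F$ in the normal directions $\mathcal N^1_pM$, not merely along $\operatorname{cone}(\theta(SM))$ --- equivalently, in the Crofton picture it samples $\nu_k^F$ near $(n-k)$-planes $\bar E$ that are tangent to $M$ at $p$ to second order, which need not lie in any $T_xM$.

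\emph{Step 2 (a transverse direction).} Since $\dim\mathcal O^2_pM>2m$ forces $n>2m$, directional regularity at $(p,u)$ means $d_{(p,u)}\theta$ is injective, i.e.\ $w\mapsto h_p(w,u)$ is injective on $T_pM$. Hence $\theta$ immerses a neighborhood of $(p,u)$ in $SM$, the cone $\operatorname{cone}(\theta(SU))=\bigcup_{x\in U}T_xM$ is exactly $2m$-dimensional, and $d_u\theta\bigl(T_{(p,u)}SM\bigr)\cap N_pM=\{h_p(w,u):w\in T_pM\}$ has dimension $m$. As $\dim\mathcal N^1_pM=\dim\mathcal O^2_pM-m\ge m+1$, there exists $\eta\in\mathcal N^1_pM$ with $\eta\notin\{h_p(w,u):w\in T_pM\}$; such $\eta$ is transverse to $\theta(SM)$ at $u$, so $\operatorname{cone}(\theta(SU))$ is a proper subcone of $\mathcal O^2_pM$. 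Writing $\eta=\sum_{i,j}c_{ij}h_p(e_i,e_j)$ in a basis of $T_pM$ and substituting into the integrand of Step 1, one checks that the transverse $\eta$-jet of $F$ at $u$ enters with a coefficient that is a nonzero function of $(h_p,u,\eta)$ and that --- when $m\ge 3$ --- is not annihilated by the remaining integration over the fibre $S_pM$ and over a neighborhood of $p$ in $M$. For $m=2$ the analogous coefficient does average to zero, in accordance with Conjecture~\ref{conj:fu}, hence WWP, being valid in that dimension by Theorem~\ref{thm:dim2} (note that the hypotheses of the present theorem are already satisfiable for generic surfaces in $\RR^5$); this is precisely why $m\ge3$ cannot be dropped.

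\emph{Step 3 (conclusion, and the obstacle).} Shrinking $U$ so that $\theta(SU)$ is embedded near $u$, choose $\psi\in C^\infty(S^{n-1})$ supported in a small ball about $u$, with $\psi$ (and, should the formula of Step 1 require it, also $d\psi$) vanishing on $\theta(SU)$ but with the transverse $\eta$-jet of Step 2 nonzero at $u$; let $F'_\varepsilon$ be the norm obtained from $F$ by perturbing $\partial B_F$ by the normal graph $\varepsilon\psi$. For $|\varepsilon|$ small $F'_\varepsilon$ is again a smooth norm with positively curved unit ball, and $F'_\varepsilon=F$ on $\operatorname{cone}(\theta(SU))=\bigcup_{x\in U}T_xM$. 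By Steps 1--2, $\tfrac{d}{d\varepsilon}\big|_{\varepsilon=0}\bigl(e^*\mu_k^{F'_\varepsilon}-e^*\mu_k^F\bigr)$ is a nonzero valuation near $p$ for a suitable $k\le m-1$, so $\mu_k^{F'_\varepsilon}|_U\neq\mu_k^F|_U$ for small $\varepsilon\neq0$, and WWP for $M$ fails at $p$. The hard part is Step 1 together with the non-vanishing claim of Step 2: making the restriction formula explicit enough to isolate the coefficient of the transverse jet of $F$ at $u$, and then proving that this coefficient survives the averaging precisely for $m\ge3$ --- a statement which is false in low dimensions (compare Theorems~\ref{thm:dim2} and~\ref{thm:special_dimension}) and therefore genuinely requires all three hypotheses $m\ge3$, directional regularity at $(p,u)$, and $\dim\mathcal O^2_pM>2m$ together.
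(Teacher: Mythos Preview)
Your overall strategy coincides with the paper's: locate a normal direction transverse to $\theta(SM)$ at $u$, perturb $F$ there while fixing $F|_{\theta(SU)}$, and show the restricted valuation changes. The dimension count in Step~2 producing such a transverse $\eta\in\mathcal N^1_pM$ is correct and matches the paper. However, the heart of the matter---your assertion that the transverse jet of $F$ at $u$ ``enters with a nonzero coefficient that is not annihilated by the remaining integration over the fibre $S_pM$''---is stated without any mechanism, and you yourself flag it as ``the hard part.'' This is precisely where the paper's technical work lies, and your picture of a local coefficient at $(p,u)$ surviving an average over $S_pM$ is not how it actually works.

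What the paper does is compute the defining current $T(\mu_1^F|_M)$ explicitly and obtain the closed formula
\[
A^1_{\sigma\tau}(e_0)=\tfrac12\,H^{-1}\Bigl(u'\mapsto\frac{\partial F}{\partial h_{\sigma(1),\tau^c}}(u')\Bigr)(e_0),
\]
where $H$ is the hemispherical transform on $S_pM$. One then chooses $e_1\perp u$ with $h_{11}=h_p(e_1,e_1)\notin\Image h_p(u,\bullet)$ and $e_0\perp u,e_1$ (this last choice is exactly where $m\ge 3$ enters---not an averaging cancellation for $m=2$ as you suggest), and proves a separate lemma: for $e_0\perp u$, one can alter $H^{-1}f(e_0)$ by an odd perturbation of $f$ supported only near $\{u,-u\}$. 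Thus the perturbation of $F$ near $\pm u$ changes $A^1_{\sigma\tau}$ at a \emph{different} fiber point $e_0\perp u$, via the nonlocality of the inverse hemispherical transform; it is not a pointwise pairing at $(p,u)$. Without identifying this transform and its nonlocality, Step~2 remains an unverified hope. Two minor points: your perturbation must be antipodally symmetrized (supported near $\{u,-u\}$, not just near $u$) to keep $F$ a norm; and it suffices to treat $k=1$, as the paper does.
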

Since the  $2$\textsuperscript{nd} osculating space is generically of dimension $m+\min(\frac{m(m+1)}{2}, \codim M)$, WWP generically fails when $\codim M>m\geq 3$. In particular, Conjecture \ref{conj:fu} fails for $\dim M\geq 3$.

Finally, we observe that a variant of the weak Weyl principle holds in general if we restrict not only the norm but also its derivatives, as follows.

\begin{MainTheorem}\label{thm:jet_weyl}
$\mu_k^F|_M$ is determined by the values $\frac{\partial^j F}{\partial v^j} (u)$, for all $(p,u)\in SM$ and $v\in \mathcal N^1_pM\subset T_u S^{n-1}$, for all $j\leq m-1$.
\end{MainTheorem}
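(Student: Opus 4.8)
The plan is to evaluate $\mu_k^F|_M$ through the normal cycle — which forces the dependence on $M$ to be of second order, hence to factor through $\mathcal N^1M$ — and then to unwind how the representing form of $\mu_k^F$ depends on $F$. Fix the auxiliary Euclidean structure. Following Section~\ref{sec:background} and \cites{alvarez-fernandes,bernig_HT}, $\mu_k^F$ is a smooth valuation, so $\mu_k^F(A)=\int_{N(A)}\omega_k^F$ for a translation-invariant $(n-1)$-form $\omega_k^F$ on $V\times S^{n-1}$; since $\mu_k^F$ is $k$-homogeneous, $\omega_k^F$ may be taken of bidegree $(k,n-1-k)$ in the horizontal/vertical splitting of $T(V\times S^{n-1})$, and its fiber integral over $S(E^\perp)$ recovers the Klain function of $\mu_k^F$, namely $E\mapsto c_k\vol_E(P_EB_F^\circ)$ on $\Grass_k(V)$, where $P_E$ is orthogonal projection onto $E$. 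Pulling back by $e$, the valuation $\mu_k^F|_M$ is represented by the density on $M$ obtained from $\omega_k^F$ by restriction to the unit normal bundle $S(\nu_VM)$ followed by fiber integration over the normal spheres $S(\nu_p^VM)$, together with the $(m-1)$-form on $SM$ obtained the same way from the normal-cone caps $\{\cos\theta\,u+\sin\theta\,\eta:\theta\in[0,\tfrac\pi2],\ \eta\in S(\nu_p^VM)\}$ over $SM$; it therefore suffices to analyse these fiber integrals (the cases $k=0$, the Euler characteristic, and $k\ge m$ being immediate).

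Two observations localise the computation. First, the tangent space of $S(\nu_VM)$ at $(p,\nu)$ is spanned by vectors covering $T_pM$, the tangential component of such a lift being $-S_\nu$ applied to the covered vector, where the shape operator $S_\nu\colon T_pM\to T_pM$ satisfies $\langle S_\nu w,w'\rangle=\langle II(w,w'),\nu\rangle$ with $II$ the second fundamental form, together with the normal connection and the vertical sphere $T_\nu S(\nu_p^VM)$; hence $S(\nu_VM)$, with its tangent bundle, depends on $M$ only through its $2$-jet, i.e.\ through $T_pM$ and $II_p$, whose image is exactly $\mathcal N^1_pM$. In particular the usual ``Weyl cancellations'' of the higher Taylor coefficients of $M$ are built in, and no higher osculating space appears. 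Second, in a form of bidegree $(k,n-1-k)$ the $k$ horizontal legs span a $k$-plane $W\subset T_pM$, and along $W$ the form $\omega_k^F$ records only $\vol_W(P_WB_F^\circ)$, a function of the section $B_F\cap W\subset B_F\cap T_pM$, hence of $F|_{T_pM}$, i.e.\ of the values $F(u)$, $(p,u)\in SM$; the remaining $m-k$ lifts contribute their vertical parts $-S_\nu(\cdot)\in T_pM$, and the last $n-m-1$ legs contribute directions in $\nu^\perp\cap\nu_p^VM$.

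Now one carries out the normal-sphere integration as in Weyl's tube computation. Each $S_\nu$-factor reads off the $\mathcal N^1_pM$-component of $\nu$ via $\langle S_\nu w,w'\rangle=\langle II(w,w'),\nu\rangle$; expanding the $B_F^\circ$-dependent part of $\omega_k^F$ in the $\mathcal N^1_pM$-directions and integrating over $S(\nu_p^VM)$, one is left with a finite sum of terms, each a contraction of a power $II_p^{\otimes j}$, with $0\le j\le m-k$, against the $j$-th derivative of $F$ at the relevant tangent direction $u\in S(T_pM)$ in the $\mathcal N^1_pM$-directions, times data of $F|_{T_pM}$; all projections of $B_F^\circ$ that occur are onto subspaces of $T_pM\oplus\mathcal N^1_pM=\mathcal O^2_pM$, so nothing else of $F$ enters. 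Since $k\ge1$ we have $j\le m-k\le m-1$, and the same bookkeeping governs the normal-cone-cap integrals that produce the $(m-1)$-form on $SM$. This yields the assertion.

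The crux — and the main obstacle — is the penultimate step: one must make $\omega_k^F$ explicit enough to see that, after the normal-sphere integration and the expansion in the $\mathcal N^1_pM$-directions, the global convex geometry of $B_F^\circ$ collapses precisely onto $F|_{T_pM}$ and the derivatives $\partial_v^{\,j}F(u)$ with $v\in\mathcal N^1_pM$, $j\le m-1$, and onto nothing else; keeping track of the order of differentiation through Weyl's computation so as to land on the sharp bound $m-1$ rather than $m$ is exactly where the restriction $k\ge1$ is used.
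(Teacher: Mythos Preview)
Your framework is right in broad strokes --- represent $\mu_k^F$ by a form on the sphere bundle, restrict to $M$ via the normal bundle, observe that only the $2$-jet of the immersion (hence $II_p$ and $\mathcal N^1_pM$) enters --- and this is indeed how the paper proceeds. But the argument has a genuine gap at precisely the point you flag as ``the crux''.

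The problem is the sentence ``the $k$ horizontal legs span a $k$-plane $W\subset T_pM$, and along $W$ the form $\omega_k^F$ records only $\vol_W(P_WB_F^\circ)$, a function of \dots\ $F|_{T_pM}$''. This is not correct for any concrete representative of $\mu_k^F$. The Klain function $E\mapsto c_k\vol_E(P_EB_F^\circ)$ is the \emph{fiber integral} of $\omega_k^F$ over $S(E^\perp)$, not the pointwise value; the form itself at a point $(p,\nu)\in V\times S^{n-1}$ depends on $F$ through a nonlocal integral transform. Concretely, the paper shows (Lemma~\ref{lem:Tmu1}) that the defining current of $\mu_1^F$ is $c_n\,\mathcal C^{-1}(F)\,\alpha\wedge\vol_{S^{n-1}}$, so what actually appears in the fiber integrals \eqref{eq:general_coefficient} is the inverse cosine transform $\mathcal C^{-1}F$ evaluated along the normal sphere, not $F$ itself. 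There is no a priori reason this should collapse to finitely many normal derivatives of $F$, and your ``expansion in the $\mathcal N^1_pM$-directions'' does not explain how it does.

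The paper resolves this in two moves you do not make. First, it reduces to $k=1$ via the Alesker product (Proposition~\ref{prop:k=1_is_enough}): since $\mu_k^F=c_k(\mu_1^F)^k$ and the product commutes with restriction, it suffices to treat $\mu_1^F|_M$. Second --- and this is the real work --- Proposition~\ref{prop:fiber_simiplification} shows by an inductive harmonic-analysis argument that the fiber integrals $I_k^{n,m}(u,N;\mathcal C^{-1}F)$ are controlled by the normal derivatives $\partial^j F/\partial N^j$ on $S_pM$ for $j\le k$. The induction step requires inverting a specific $\SO(m)$-equivariant operator $T=\sum (-1)^{r'}a_{k-2r'}R_D^{k-2r'}$ on $C^\infty_\epsilon(S^{m-1})$; this is Lemma~\ref{lem:alternating_sum_invertible}, proved by computing the signs of Legendre-polynomial derivatives and Sobolev estimates on eigenvalues. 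Since the coefficients $A^k_{\sigma\tau}$ of $T(\mu_1^F|_M)$ range over $0\le k\le m-1$, one obtains the bound $j\le m-1$. None of this analytic content is present in your sketch, and without it the passage from $\mathcal C^{-1}F$ back to jets of $F$ is unaccounted for.
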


	According to \cite{gu}, every $m$-dimensional Finsler manifold can be locally isometrically embedded in a $2m$-dimensional normed space, see also \cite{shen} and the references therein. In this situation, WWP holds by Theorem~\ref{thm:special_dimension} for a generic submanifold, but fails in general except in codimensions $1$ and $2$. 
	Moreover, we give an explicit example of a submanifold $M^3\subset \RR^6$ for which WWP fails (Example \ref{exm:M3R6}), which is the lowest dimensional setting when this can happen. Thus the genericity assumption in Theorem \ref{thm:special_dimension} is essential, and one cannot define Holmes-Thompson intrinsic volumes on general Finsler manifolds.

Let us comment on the choice of the Holmes-Thompson definition of volume for the generalization of the Euclidean intrinsic volumes to smooth Minkowski spaces.
Given a definition of volume $\calV$ for normed spaces, one may ask if  $k$-homogeneous valuations can be functorially assigned to a normed space $(V,F)$ for each $0\leq k\leq \dim V$, such that 
$\mu^\calV_k|_E= \calV_{(E,F|_E)}$  for each $k$-dimensional linear subspace $E\subset V$. By the Klain embedding theorem, $\mu_k^\calV$ would satisfy the linear Weyl principle.
According to J.C.~\'Alvarez Paiva \cite{alvarez_problems}, the three most commonly used definitions of volume in Finsler Geometry are: Busemann, Holmes-Thompson, and Benson (Gromov mass*). 
To date, only the Holmes-Thompson definition has been shown to be extendable as a family of valuations.  Moreover, the Holmes-Thompson intrinsic volumes form an algebra under the Alesker product \cite{bernig_HT}, and no other definition of volume would have this property. This follows at once by noting that all definitions of volume coincide for $1$-dimensional spaces, and by the Klain embedding theorem, the corresponding $1$-homogeneous valuation must coincide for all definitions of volume. This makes the Holmes-Thompson definition the most natural from the perspective of integral geometry (see also \cites{AlvarezFernandes:Crofton,SchneiderWieacker:IGMS,Ludwig:Areas} for further evidence).

 It is worth noting that the Weyl principle will be seen to fail already for the $1$-homogeneous valuations. Thus the Weyl principle would fail in the Finsler setting for any definition of volume which can be extended to a family of valuations.

\subsection{Plan of the paper.}
In Section~\ref{sec:background}, we recall the notions of valuation theory that we 
need and carry out the simple reduction to the 1-homegeneous case.
In Section~\ref{sec:current_computation}, we compute the defining current of the first Holmes-Thompson intrinsic volume. Then in Section~\ref{sec:fiber_integral}, we compute explicitly the restriction to a submanifold, which is given by certain fiber integrals. We then utilize classical harmonic analysis on the sphere to simplify those integrals.
Finally in Section~\ref{sec:main_proof} we utilize the previous computation to prove the main theorems, and construct some examples demonstrating the sharpness of our results. Section~\ref{sec:prolbems} is devoted to open problems and conjectures.

\section{Background and preparation}\label{sec:background}

\subsection{Some integral transforms}
We write $C^N_\epsilon(S^{n-1})$ for the space of $C^N$-smooth functions on the sphere  of parity $\epsilon=\pm$ with respect to the antipodal map.

We will use the following $\SO(n)$-equivariant integral operators:
\begin{enumerate}
	\item The cosine transform $\mathcal C:C^\infty(S^{n-1})\to C^\infty(S^{n-1})$ given by
	\[\mathcal C(f)(v)=\int_{S^{n-1}}|\langle u,v\rangle | f(u)du  \]
	\item The hemispherical transform $H:C^\infty(S^{n-1})\to C^\infty(S^{n-1})$ given by
	\[ H(f)(v)=\int_{u\in S^{n-1}: \langle u,v\rangle \geq 0}f(u) du \]
	
\end{enumerate}
When ambiguity can arise, we write $\mathcal C_n$ instead of $\mathcal C$.
It is clear that $\mathcal C$ vanishes on odd functions, and $\mathcal C$ is well-known to be invertible on the even functions, with a continuous inverse $\mathcal C^{-1}:C^\infty_+(S^{n-1})\to C^\infty_+(S^{n-1})$, see, e.g., \cite{groemer}.
In contrast, $H$ has rank one on the even functions, and is continuously invertible on the odd functions \cite{rubin}.

The $s$th Sobolev norm on $S^{m-1}$ is given by
$$\|f\|_{H^s}= \left(\sum_{{n\geq 0}} (1+n)^{2s} \|f_n\|^2_{L^2}\right)^{1/2},$$
where $f=\sum_{n}f_n$  is the decomposition into spherical harmonics, and the Sobolev space 
$H^s(S^{m-1})$ is the completion of $C^\infty(S^{m-1})$ with respect to this norm (see, e.g., \cite{Garrett}). The $C^k$-norm on $S^{m-1}$ is defined inductively by
$$ \|f\|_{C^k}= \|f\|_{C^0} + \sup_{i} \| X_i f\|_{C^{k-1}},$$
where $(X_i)$ is a basis of the Lie algebra $\mathfrak{so}(n)$ and $Xf(u)=\dt f(e^{tX}u)$ for each $X\in \mathfrak{so}(n)$. 
By \cite{groemer}*{Eq.~3.6.4}, there is a continuous embedding
\begin{equation}\label{eq:CkHs} C^{2k}(S^{m-1})\subset  H^s(S^{m-1}) \qquad \text{for } 2k> s+ \frac{n}{2};\end{equation}
By the Sobolev embedding theorem,
\begin{equation}\label{eq:sobolev} H^s(S^{m-1})\subset C^k(S^{n-1})\qquad \text{for } s> k+ \frac{n-1}{2}.\end{equation}
In particular, $\bigcap_{s>0} H^s(S^{m-1})=C^\infty(S^{m-1})$.

\subsection{Valuations} As a general reference for valuation theory we recommend  the books by  R.~Schneider \cite{Schneider:BM}*{Chapter 6} and S.~Alesker~\cite{Alesker:Book}.

We write $\Dens(V)$ for the one-dimensional space of Lebesgue measures on the finite-dimensional real vector space $V$. The even $k$-homogeneous valuations on $V$ are denoted $\Val_k^+(V)$.
The Klain map $\Klain:\Val_k^+(V)\to \Gamma(\Grass_k(V), \Dens(E))$ is given by $\Klain_\phi=\phi|_E$. The latter is a Lebesgue measure on $E$ by Hadwiger's theorem. By a theorem of Klain, $\Klain$ is injective.

Put $\mathbb P_M=\mathbb P_+(T^*M)$ for the co-sphere bundle of $M$, which is the oriented projectivization of the cotangent bundle. It is equipped with a natural contact structure, and we call a form $\omega\in \Omega(\mathbb P_M)$ \emph{vertical} if its restriction to any contact hyperplane vanishes.
Given a Riemannian structure on $M$, there is a natural choice of contact form $\alpha$ given by $\alpha_{x,\xi}=\langle \xi,d\pi(\bullet)\rangle$, where $\pi\colon \PP_M\to M$ is the natural projection, under the identification $\mathbb P_M=SM$.

Let us recall more precisely the notion of valuations on manifolds. For simplicity, we assume $M$ is oriented. The family of test bodies $\mathcal P(M)$ can be chosen in several ways; we take them to be the compact submanifolds with corners. For $X\in\mathcal P(M)$, its conormal cycle is $\nc(X)\subset \mathbb P_M$. It generalizes both the normal cycle of a convex set in a linear space and the conormal bundle of a submanifold, 
see, e.g. \cites{Fu:IGR, FPR:WDC, Fu:Subanalytic}

\begin{definition}
	The space of smooth valuations $\mathcal V^\infty(M)$ on an oriented manifold $M^m$ consists of functions $\phi:\mathcal P(M)\to \RR$ of the form
	\[ \phi(X)=\int_X\mu+\int_{\nc(X)}\omega \]
	where $\mu\in \Omega^m( M)$  and $\omega\in\Omega^{m-1}(\mathbb P_M)$.
\end{definition}

It was established in \cite{bernig-brocker} that a smooth valuation $\phi\in\mathcal V^\infty(M)$ is uniquely determined by its \emph{defining currents} $(C,T)\in  C^\infty(M)\times \Omega^{m}(\mathbb P_M)$, which can be any pair of forms subject to the conditions
\begin{enumerate}
	\item $T$ is closed and vertical.
	\item $\pi_*T=dC$, where $\pi:\mathbb P_M\to M$ is the projection.
\end{enumerate}
If $\phi\in \mathcal V^\infty(M)$ is represented by $(\omega,\mu)$ then $T=a^*(D\omega+\pi^*\mu)$, $C=\pi_*\omega$ where $a$ is the fiberwise antipodal map, and $D$ the Rumin differential operator. We write $\phi=[[\mu,\omega]]=[(C,T)]$, $T=T(\phi)$, $C=C(\phi)$.

Every immersion $e:M\looparrowright N$ of smooth manifolds induces a  pullback map  $e^*:\mathcal V^\infty(N)\to \mathcal V^\infty(M)$,  which for embeddings is given by
$$e^*\phi (X)= \phi(e(X)), \qquad X\in \calP(M).$$ 
 On the level of differential forms and in the special case $e:M\looparrowright V$
it can be  described as follows (see \cite{alesker_integral}). 
Let $N^*M$ denote the conormal bundle and consider the following natural maps:  the projection $q: \mathbb P_+(N^*M)\to M$; the fiberwise inclusion $\theta: \mathbb P_+(N^*M)\looparrowright \mathbb P_V$; the map
$\beta:\widetilde{ \mathbb P_V\times_V M}\to \mathbb P_M$, extending the projection $\mathbb P_V\times_V M \setminus N^*M\to \mathbb P_M$ to $\widetilde{ \mathbb P_V\times_V M}$, which is the oriented blow-up along $N^*M$; and $\bar\alpha:\widetilde{ \mathbb P_V\times_V M}\to \mathbb P_V$, the composition of the blow-up map with the natural map $ \mathbb P_V\times_V M\to \mathbb P_V$. 
If $\phi\in \mathcal V^\infty(V)$ is represented by $(\mu,\omega)\in  \Omega^n(M)\times\Omega^{m-1}(\mathbb P_M)$, 
then $e^*\phi=[[\mu', \omega']]$ where
\begin{equation}\label{eq:restriction_valuation}
\omega'=\beta_*\bar\alpha^*\omega, \quad \mu'=q_*\theta^*\omega.
\end{equation} 
Alternatively, if $\phi=[(C,T)]$, then $e^*\phi=[(C',T')]$ where
\begin{equation}\label{eq:restriction_valuation_CT}
T'=\beta_*\bar\alpha^*T, \quad C'=e^*C.
\end{equation}

\subsection{The trivial cases}

Here we verify Conjecture~\ref{conj:fu} for $\dim M\leq 2$ and show that in general it is enough to prove it for the first intrinsic volume. 
In the following, $V$ is a finite-dimensional real vector space.

\begin{lemma}
 Let $\varphi\in \Val^+_k(V)$ be a smooth valuation.  If $X\in \calP(V)$ is a compact $k$-dimensional submanifold with corners, then 
 $$\varphi(X)= \int_{X} \Klain_\varphi(T_xM).$$
\end{lemma}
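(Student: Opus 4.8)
The plan is to recognize $\varphi(X)$ as the pullback valuation $e^{*}\varphi$ evaluated on $X$, where $e\colon X\hookrightarrow V$ is the inclusion, and to identify $e^{*}\varphi$ with the smooth measure on $X$ whose density at $x$ is $\Klain_{\varphi}(T_{x}X)$. If $k=\dim V$ the statement is immediate: $\varphi$ is then a Lebesgue measure on $V$, which by definition of the Klain map equals $\Klain_{\varphi}(V)$. So we may assume $1\le k<\dim V$.

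Consider $e^{*}\varphi\in\mathcal V^{\infty}(X)$, a smooth valuation on the $k$-dimensional manifold $X$. I would use three standard facts from the structure theory of smooth valuations on manifolds (see \cite{alesker_integral} and references therein): $\mathcal V^{\infty}$ carries a canonical decreasing filtration by homogeneity degree; pullback along an immersion respects this filtration; and on a $k$-dimensional manifold the degree-$k$ filtration piece consists precisely of the smooth signed measures, the associated graded at a point $x$ being $\Dens(T_{x}X)$. Since $\varphi$ is homogeneous of degree $k$ it lies in the degree-$k$ piece over $V$, hence $e^{*}\varphi$ lies in the degree-$k$ piece over $X$; thus $e^{*}\varphi(A)=\int_{A}\nu$ for a unique smooth density $\nu$ on $X$ and all $A\in\calP(X)$, and in particular $\varphi(X)=\int_{X}\nu$. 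Its density $\nu_{x}$ is the image of $e^{*}\varphi$ in the associated graded at $x$, which by functoriality of the associated graded under pullback equals the restriction of the translation-invariant valuation $\varphi$ along $d_{x}e\colon T_{x}X\hookrightarrow V$, i.e.\ the restriction of $\varphi$ to the $k$-plane $T_{x}X\subset V$; by Klain's theorem this is $\Klain_{\varphi}(T_{x}X)$. Hence $\nu_{x}=\Klain_{\varphi}(T_{x}X)$ and $\varphi(X)=\int_{X}\Klain_{\varphi}(T_{x}X)$.

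The step needing the most care is the claim that no component of $e^{*}\varphi$ of degree below $k$ survives, which above is delegated to the filtration compatibility of the pullback; without it one would also have to evaluate those lower components on $X$. Alternatively one can argue directly from the restriction formulas \eqref{eq:restriction_valuation} and \eqref{eq:restriction_valuation_CT}: writing $\varphi=[[0,\omega]]$ with $\omega$ the translation-invariant form on $\mathbb P_{V}$ representing $\varphi$, one checks $\pi_{*}\omega=0$ for degree reasons, so $C(\varphi)=0$ and $C(e^{*}\varphi)=e^{*}C(\varphi)=0$; the form $q_{*}\theta^{*}\omega$ is then a top-degree form on $X$ whose value at $x$ is a fibre integral of $\omega$ over the normal sphere $\mathbb P_{+}(N^{*}_{x}X)$, involving only $\omega$ along $T_{x}X$ and its normal directions, hence equal to the fibre integral that computes the Klain density of $\varphi$ at $T_{x}X$; finally one verifies that $T(e^{*}\varphi)=\beta_{*}\bar\alpha^{*}T(\varphi)$ agrees with $a^{*}\pi^{*}\big(\Klain_{\varphi}(T_{\bullet}X)\big)$, confirming that $e^{*}\varphi$ is exactly this measure.
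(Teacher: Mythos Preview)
Your main argument via the Alesker filtration is correct and is the standard route to this statement; the paper itself only cites \cite{SolanesWannerer:Spheres}*{Lemma 3.7(ii)}, whose proof proceeds along essentially the same lines. One small correction to your alternative sketch: the claim that $\pi_*\omega=0$ ``for degree reasons'' is not right, since $\omega$ is an $(n-1)$-form on a bundle with $(n-1)$-dimensional fibers, so $\pi_*\omega$ is a well-defined function on $V$; the correct reason $C(\varphi)=0$ is that a translation-invariant $k$-homogeneous valuation with $k\geq 1$ vanishes on points.
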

\begin{proof}
 Similar to the proof of Lemma~3.7(ii) in \cite{SolanesWannerer:Spheres}.
\end{proof}

The following is a direct consequence of the above lemma and the definitions.

\begin{corollary}\label{cor:klain} Let $V$ be a normed space. 
 If $X\in \calP(V)$ is a compact $k$-dimensional manifold with corners, then 
 $\mu_k^F(X)$ equals the Holmes-Thompson volume of $(X, F|_X)$. If $M\subset V$ is a $k$-dimensional immersed submanifold, then $\mu_k^F|_M$ is the Holmes-Thompson volume measure of $(M, F|_M)$.
\end{corollary}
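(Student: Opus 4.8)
The plan is to read the statement off from the preceding lemma together with the defining property of the Holmes--Thompson intrinsic volumes. Recall that $\mu_k^F\in\Val_k^+(V)$ is by definition the unique even, $k$-homogeneous, translation-invariant, continuous valuation with $\mu_k^F|_E=\vol^{HT}_E$ for every $k$-dimensional linear subspace $E\subset V$, where $\vol^{HT}_E$ is the Holmes--Thompson Lebesgue measure of the normed space $(E,F|_E)$. In the language of the Klain map this reads $\Klain_{\mu_k^F}(E)=\vol^{HT}_E$ for all $E\in\Grass_k(V)$.

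First I would apply the lemma to $\varphi=\mu_k^F$ and to a compact $k$-dimensional submanifold with corners $X\in\calP(V)$, which gives
\[ \mu_k^F(X)=\int_X \Klain_{\mu_k^F}(T_xX)=\int_X \vol^{HT}_{T_xX}. \]
Since an absolutely continuous measure on $X$ is the same datum as a continuous choice of a Lebesgue measure on the tangent spaces of $X$, the integrand $x\mapsto \vol^{HT}_{T_xX}$ is precisely the Holmes--Thompson volume density of $(X,F|_X)$, so the right-hand side is by definition the Holmes--Thompson volume of $(X,F|_X)$. This settles the first assertion.

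For the second assertion, let $e\colon M\looparrowright V$ be the immersion, with $\dim M=k$, and consider $e^*\mu_k^F\in\mathcal V^\infty(M)$. I would compare it with the valuation given by integration against the density $\vol^{HT}_{(M,F|_M)}$. On any sufficiently small compact submanifold with corners $A\subset M$ the map $e|_A$ is an embedding, and since $\dim A=\dim M$ we have $T_xA=T_xM$ for $x\in A$; applying the first assertion to $X=e(A)$ yields $e^*\mu_k^F(A)=\mu_k^F(e(A))=\int_A \vol^{HT}_{T_xM}$. As a smooth valuation on $M$ is represented by differential forms on $M$ and on $\mathbb P_M$, and forms are determined by their pointwise values, agreement on arbitrarily small test bodies through every point of $M$ forces the representing forms to coincide; hence $e^*\mu_k^F$ is integration against $\vol^{HT}_{(M,F|_M)}$, i.e. $\mu_k^F|_M$ is the Holmes--Thompson volume measure of $(M,F|_M)$.

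Once the lemma is in hand the argument is essentially bookkeeping; the only point requiring a little care is this last step --- passing from the global identity for embedded test bodies to the identification of the restricted valuation with a measure in the immersed case --- which is handled by the locality of the representing forms together with the fact that an immersion is locally an embedding. No genuine obstacle arises.
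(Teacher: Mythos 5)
Your argument is correct and is essentially the paper's: the paper derives this corollary as a direct consequence of the preceding lemma (evaluating the Klain section $\Klain_{\mu_k^F}(E)=\vol^{HT}_E$ along tangent spaces) together with the definitions, which is exactly what you do. The extra care you take with the immersed case (locality of the representing forms plus the fact that an immersion is locally an embedding) just spells out what the paper leaves implicit.
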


\begin{lemma}\label{lemma:topology}
 Consider a smooth valuation $\varphi\in\Val^+_{k}(V)$. If $X\subset V$ is a compact $(k+1)$-dimensional submanifold with boundary, then 
 $$\varphi(X) = \frac 1 2 \varphi(\partial X).$$
\end{lemma}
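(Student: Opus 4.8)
The plan is to use the explicit description of a smooth valuation via its defining currents $(C,T)$ together with Stokes' theorem. Since $\varphi\in\Val^+_k(V)$ is a smooth translation-invariant valuation on a linear space, write $\varphi = [[\mu,\omega]]$ with $\mu\in\Omega^n(V)$ and $\omega\in\Omega^{n-1}(\mathbb P_V)$. Because $\varphi$ is $k$-homogeneous with $k < n$, the top-degree part $\mu$ contributes nothing: $\mu = 0$ may be assumed (a $k$-homogeneous valuation with $k<\dim V$ has no smooth density part), so $\varphi(Y)=\int_{\nc(Y)}\omega$ for every test body $Y$. First I would record the two facts about conormal cycles that drive the computation: for a compact $(k+1)$-dimensional submanifold $X$ with boundary, the conormal cycle $\nc(X)$ is supported over $X$ and, away from $\partial X$, is the full cosphere fiber $\mathbb P_+(N^*_xX)$ over each interior point; and the boundary of $X$ enters $\nc(X)$ through the conormal cycle of $\partial X$ itself, with the relation (up to orientation) $\partial\,\nc(X) = \pm\,\nc(\partial X)$ when $X$ is a manifold with boundary. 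This is the key structural input and is standard in the theory of normal/conormal cycles (e.g.\ \cite{Fu:IGR, FPR:WDC}).

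Next, the homogeneity degree is used to see that the "interior" contribution vanishes. Since $\dim X = k+1$, at an interior point $x$ the conormal fiber $\mathbb P_+(N^*_xX)$ has dimension $n-k-2$, while the part of $\nc(X)$ lying over the interior is an $(n-1)$-cycle that fibers over the $(k+1)$-dimensional interior of $X$ with these $(n-k-2)$-dimensional fibers. The valuation $\varphi$ being $k$-homogeneous forces $\omega$ to behave in a way that makes $\int$ over this interior piece vanish — concretely, one invokes that the restriction of $\varphi$ to a $(k+1)$-dimensional submanifold is already determined by the Klain function, which is $k$-homogeneous and hence has no $(k+1)$-homogeneous component, so the "bulk" integral over the interior is zero. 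I would phrase this cleanly by testing against convex bodies: for a $(k+1)$-dimensional convex body $K$ in a $(k+1)$-plane $E$, $\varphi(K) = \tfrac12\varphi(\partial K)$ holds since $\varphi|_E$ is a $k$-homogeneous even valuation on $E$, hence (by the Euler relation / the structure of $\Val_k(E)$ with $k = \dim E - 1$) satisfies exactly this doubling relation on convex bodies; then extend from the convex case to general compact $(k+1)$-submanifolds with boundary by the usual localization of smooth valuations.

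Finally I would assemble: decompose $X$ (or approximate it) so that the identity reduces to the convex model case in each chart, use additivity of $\varphi$ and of $\nc(\cdot)$ under the inclusion–exclusion on a polyhedral-type decomposition, and pass to the limit using continuity of the smooth valuation in the relevant topology. The main obstacle I anticipate is the bookkeeping of orientations and the precise claim that $\partial\,\nc(X)$ relates to $\nc(\partial X)$ with the factor that produces exactly $\tfrac12$ rather than $1$: the factor $\tfrac12$ comes from the fact that the conormal cycle of a manifold-with-boundary "closes up" the boundary conormal cycle symmetrically on the two sides, so half of $\nc(\partial X)$ is already accounted for by the smooth part of the cycle over $\partial X$. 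Making this rigorous — rather than hand-waving via the convex case — is where the real work lies; the cleanest route is probably to avoid currents entirely and argue purely on convex bodies via the Euler–type relation for top-codimension valuations, then invoke the density of such configurations, which is exactly the strategy suggested by the citation to \cite{SolanesWannerer:Spheres} in the preceding lemma.
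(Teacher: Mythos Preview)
Your central structural claim is false: the conormal cycle of any compact submanifold with corners is a \emph{closed} Legendrian current, so $\partial\,\nc(X)=0$, not $\pm\nc(\partial X)$. Stokes' theorem on $\nc(X)$ therefore yields nothing. The genuine relation between $\nc(X)$ and $\nc(\partial X)$ is antipodal rather than a boundary relation: over a point of $\partial X$ the cycle $\nc(X)$ fills exactly one hemisphere of the conormal sphere of $\partial X$, the other hemisphere being $a_*\nc(X)$ for the fiberwise antipodal map $a$. Extracting the factor $\tfrac12$ thus comes down to knowing how $\varphi$ behaves under $a$, and that is precisely what the Euler--Verdier involution $\sigma$ on $\mathcal V^\infty(V)$ encodes.

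The paper's proof uses exactly this and is two lines: an even, $k$-homogeneous, translation-invariant valuation has Euler--Verdier eigenvalue $(-1)^k$, i.e.\ $\sigma\varphi=(-1)^k\varphi$; then \cite{alesker4}*{Lemma 4.1.1}, which relates $\varphi(X)$, $(\sigma\varphi)(X)$ and $\varphi(\partial X)$ for compact manifolds with boundary, gives the identity at once. Your convex-body route is the same computation in disguise --- for full-dimensional $K$ in a $(k{+}1)$-plane one has $\nc(\partial K)=\nc(K)\pm a_*\nc(K)$ and the $\sigma$-eigenvalue produces the factor $2$ --- but the proposed extension ``by additivity and density'' to arbitrary $(k{+}1)$-manifolds with boundary is not an argument: such manifolds are not approximated by convex bodies, and any honest localization simply reproduces the hemisphere bookkeeping above, which is Alesker's lemma.
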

\begin{proof}
Note that $\phi$ has Euler-Verdier eigenvalue equal to $(-1)^k$. Now apply \cite{alesker4}*{Lemma 4.1.1}.
\end{proof}

\begin{corollary}\label{cor:codimension_one} Let $(V,F)$ be a normed space and let $M\subset V$ be an immersed smooth submanifold of dimension $k+1$. 
 Then the restriction of $\mu_k^F$ to $M$ is intrinsically defined. For every compact submanifold with corners $X\subset M$, $\mu_k^F(X)=\frac12 \vol^{HT}_{k}(\partial X, F|_{\partial X})$.
\end{corollary}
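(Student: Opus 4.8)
The plan is to combine Lemma~\ref{lemma:topology} with Corollary~\ref{cor:klain}. Recall that $\mu_k^F$ is a smooth, even, $k$-homogeneous valuation on $V$, and let $X\subset M$ be a compact submanifold with corners. If $\dim X\le k$ the value $\mu_k^F(X)$ is already covered by Corollary~\ref{cor:klain}: it is the Holmes-Thompson volume of $(X,F|_X)$ when $\dim X=k$, and it is $0$ when $\dim X<k$ (a $k$-homogeneous valuation vanishes on submanifolds of dimension $<k$); in either case it depends only on $F$ restricted to the tangent planes of $X\subset M$. So the substance of the statement lies in the top-dimensional case $\dim X=k+1=\dim M$, which I treat next.

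In that case $X$ is, in particular, a compact $(k+1)$-dimensional submanifold with corners of $V$, so Lemma~\ref{lemma:topology} gives
\[
\mu_k^F(X)=\tfrac12\,\mu_k^F(\partial X).
\]
Now $\partial X\in\calP(V)$ is a compact $k$-dimensional manifold with corners lying in $M$, so by Corollary~\ref{cor:klain} we get $\mu_k^F(\partial X)=\vol^{HT}_k(\partial X,F|_{\partial X})$. Since $T_x(\partial X)\subset T_xM$ for every $x\in\partial X$, the Finsler metric $F|_{\partial X}$ is the restriction of $F|_M$, and the Holmes-Thompson volume measure of $(\partial X,F|_{\partial X})$ is built solely from the values of $F$ on the tangent $k$-planes of $\partial X$. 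This yields the displayed formula and shows that $\mu_k^F|_M$ depends only on $(M,F|_M)$. For an immersion $e\colon M\looparrowright V$ one runs the same argument for $e^*\mu_k^F$, using the pullback formalism \eqref{eq:restriction_valuation_CT}.

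The only point requiring care is that Lemma~\ref{lemma:topology} is stated for submanifolds with smooth boundary, whereas $X$ may have corners, so that $\partial X$ is only piecewise smooth. I would resolve this by observing that the proof of Lemma~\ref{lemma:topology} rests on the Euler-Verdier eigenvalue identity of \cite{alesker4}*{Lemma 4.1.1}, which holds for every test body in $\calP(V)$ --- the conormal cycle is designed to accommodate corners --- and therefore applies to $X$ directly. Alternatively, one can exhaust $X$ from inside by $(k+1)$-submanifolds with smooth boundary and pass to the limit, using continuity of $\mu_k^F$ and of the Holmes-Thompson volume. This is the only place where any work is needed, and it is routine.
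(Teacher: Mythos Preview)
Your argument is correct and follows the same strategy as the paper: combine Lemma~\ref{lemma:topology} with Corollary~\ref{cor:klain}, and handle the passage from smooth boundary to corners by approximation. The paper phrases the intrinsic-ness conclusion slightly differently, invoking \cite{bernig-brocker}*{Theorem~1} to say that a smooth valuation is already determined by its values on submanifolds with \emph{smooth} boundary (so once those values are intrinsic, the whole valuation is), and then treats the corners formula by the same approximation you mention; your direct computation achieves the same end.
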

\begin{proof}
 For $X$ a submanifold with boundary, this follows from Lemma~\ref{lemma:topology} and Corollary~\ref{cor:klain}. 
 As  the proof of \cite{bernig-brocker}*{Theorem 1} shows that a smooth valuation is determined by its values on smooth submanifolds with boundary, we conclude that  $\mu_k^F|_M$ is intrinsically defined. For the last statement, we may use continuity and approximate a manifold with corners by a sequence of manifolds with boundary.
\end{proof}

In particular, the Weyl principle holds for Finsler surfaces.

\begin{theorem}\label{thm:dim2}
 Conjecture~\ref{conj:fu} holds for $\dim M\leq 2$. 
\end{theorem}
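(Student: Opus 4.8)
The plan is to reduce Conjecture~\ref{conj:fu} for $\dim M\leq 2$ to the cases already handled by the corollaries above. First I would recall from the introduction that it suffices to treat $\mu_k^F$ for $k\geq 1$ (the case $k=0$ being the Euler characteristic, which is a topological invariant and in particular independent of the norm). Since $\dim M\leq 2$, only $k=1$ and $k=2$ can occur, and the case $k=\dim M$ is immediate: by Corollary~\ref{cor:klain}, $\mu_k^F|_M$ is just the Holmes-Thompson volume measure of $(M,F|_M)$, which depends only on the pulled-back norm $e^*F$. So the only substantive case is $k=1$, $\dim M=2$.

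For $k=1$ and $\dim M=2$, the submanifold $M$ has dimension $k+1$, so Corollary~\ref{cor:codimension_one} applies directly: the restriction $\mu_1^F|_M$ is intrinsically defined, and on a compact submanifold with corners $X\subset M$ one has $\mu_1^F(X)=\tfrac12\vol_1^{HT}(\partial X, F|_{\partial X})$. Since $\partial X\subset M$ and the norm $F|_{\partial X}$ is determined by $F|_M=e^*F$, this value depends only on $e^*F$. Concretely, if $e_1^*F_1=e_2^*F_2$, then for every $X$ the two sides $e_j^*\mu_1^{F_j}(X)$ agree because both equal $\tfrac12$ times the Holmes-Thompson length of $\partial X$ measured with the common pulled-back norm. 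I would phrase this for embeddings first and then note that the immersion case follows, since pullback along an immersion is locally the pullback along an embedding and all the relevant quantities are local.

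The remaining bookkeeping is just to assemble these observations: I would state that for any smooth immersions $e_j\colon M\to V_j$ with $e_1^*F_1=e_2^*F_2$ and any $X\in\calP(M)$, we have $e_1^*\mu_k^{F_1}(X)=e_2^*\mu_k^{F_2}(X)$ for $k=0$ (topological), $k=\dim M$ (Corollary~\ref{cor:klain}), and $k=1<\dim M=2$ (Corollary~\ref{cor:codimension_one}), which exhausts all cases when $\dim M\leq 2$.

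I do not expect a genuine obstacle here; the content has been front-loaded into Lemma~\ref{lemma:topology} and the two corollaries. The only point requiring a little care is making sure the codimension-one argument is genuinely \emph{local} so that it applies to immersions and not merely embeddings, and that approximating manifolds with corners by manifolds with boundary (as in the proof of Corollary~\ref{cor:codimension_one}) is compatible with fixing the metric data on $M$ — both are routine. Thus the proof is essentially a one-paragraph deduction citing Corollaries~\ref{cor:klain} and~\ref{cor:codimension_one}.
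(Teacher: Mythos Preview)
Your proposal is correct and takes essentially the same approach as the paper: the paper's proof is a one-sentence citation of Corollaries~\ref{cor:klain} and~\ref{cor:codimension_one}, and you have simply unpacked this by treating the cases $k=0$, $k=\dim M$, and $k=1<\dim M=2$ separately. Your additional remarks on locality for immersions are reasonable but not strictly needed, since the corollaries are already stated for immersed submanifolds.
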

\begin{proof}
 This is an immediate consequence of Corollaries~\ref{cor:klain} and \ref{cor:codimension_one}.
\end{proof}

Finally, we  observe that it suffices to prove the Weyl principle for the first intrinsic volume.

\begin{proposition}\label{prop:k=1_is_enough}
	Let $M\subset V$ be a submanifold. If $\mu^F_1|_M$ is intrinsically defined, then so are all $\mu_k^F|_M$ for all $k\geq 2$.
\end{proposition}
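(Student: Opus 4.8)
The plan is to exploit the multiplicative structure of the Lipschitz–Killing / Holmes–Thompson valuations under the Alesker product. Recall from \cite{bernig_HT} that on a smooth Minkowski space $(V,F)$ the Holmes–Thompson intrinsic volumes $\mu_k^F$ are polynomial in $\mu_1^F$ with respect to the Alesker product; more precisely, there is a universal relation expressing $\mu_k^F$ as a constant multiple of $(\mu_1^F)^{\cdot k}$ modulo lower-order product terms, or — better for our purposes — the span of $\{(\mu_1^F)^{\cdot j} : 0 \le j \le n\}$ is exactly the Lipschitz–Killing subalgebra, and in particular each $\mu_k^F$ lies in the subalgebra generated by $\mu_1^F$ and the Euler characteristic $\chi = \mu_0^F$. (The Euler characteristic is independent of $F$.) Thus it suffices to show that the Alesker product is compatible with restriction to $M$, i.e. that pullback under the immersion $e\colon M \looparrowright V$ is an algebra homomorphism $e^*\colon \mathcal V^\infty(V) \to \mathcal V^\infty(M)$.

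First I would record that $e^*$ is indeed multiplicative: this is a standard fact about the Alesker product (see \cite{alesker_integral}, \cite{Alesker:Book}), namely that for any immersion of smooth manifolds the pullback respects the product of smooth valuations, $e^*(\phi \cdot \psi) = e^*\phi \cdot e^*\psi$, and sends the Euler characteristic of $V$ to the Euler characteristic of $M$. Granting this, write $\mu_k^F = P_k(\mu_1^F, \chi)$ for the universal polynomial $P_k$ furnished by \cite{bernig_HT}. Applying $e^*$ gives
\[
e^*\mu_k^F = P_k\bigl(e^*\mu_1^F,\, e^*\chi\bigr) = P_k\bigl(\mu_1^F|_M,\, \chi_M\bigr).
\]
By hypothesis $\mu_1^F|_M$ depends only on the induced norm $F|_M$ (that is what "intrinsically defined" means here — it is unchanged if we replace $(V,F)$ by another normed space inducing the same norm on $TM$), and $\chi_M$ is a topological invariant of $M$ alone. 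Hence the right-hand side depends only on $F|_M$, which is exactly the assertion that $\mu_k^F|_M$ is intrinsically defined for all $k \ge 1$.

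The one point that needs care — and the main potential obstacle — is verifying that the universal polynomial relation of \cite{bernig_HT} has coefficients that are themselves independent of $F$, so that the identity $\mu_k^F = P_k(\mu_1^F,\chi)$ is genuinely uniform in $F$; if instead the coefficients could depend on $F$ in a way not controlled by $F|_M$, the argument would collapse. I expect this to be fine: the relation comes from the structure of the Lipschitz–Killing algebra, which in \cite{bernig_HT} is shown to be isomorphic, as a graded algebra, to a fixed polynomial ring independent of the Minkowski metric, with the isomorphism class pinned down by the linear Weyl principle (the Klain functions of $(\mu_1^F)^{\cdot k}$ are computed universally). A secondary check is that one is allowed to work with the product on $\mathcal V^\infty(V)$ rather than on the dense subalgebra of polynomial-smooth translation-invariant valuations — but since all $\mu_k^F$ are smooth and translation-invariant this is covered by the standard theory. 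With these verifications in place the proof is complete.
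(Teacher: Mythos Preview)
Your proposal is correct and follows essentially the same approach as the paper: express $\mu_k^F$ as a universal polynomial in $\mu_1^F$ via \cite{bernig_HT} (the paper states it simply as $\mu_k^F = c_k(\mu_1^F)^k$ with universal constants $c_k$), and then invoke that pullback under an immersion is an algebra homomorphism for the Alesker product. Your additional caution about the universality of the coefficients and the inclusion of $\chi$ is harmless and does not change the argument.
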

\begin{proof}
	This follows at once by applying the Alesker product. It holds that $\mu_k^F=c_k(\mu_1^F)^k$ for certain universal coefficients $c_k$ \cite{bernig_HT}, and it remains to recall that the Alesker product commutes with restrictions.
	\end{proof}

\section{The defining current of the first intrinsic volume} \label{sec:current_computation}
For the remainder of the paper, we fix an auxiliary Euclidean structure on $V=\RR^n$, and let $S^{n-1}$ be the corresponding unit sphere.

\begin{lemma}\label{lem:Tmu1} It holds that 
\begin{equation}\label{eq:linear_current}T(\mu_1^F)= c_n\mathcal  C^{-1}(F)  \, \alpha \wedge  \vol_{S^{n-1}}
, \qquad C(\mu_1^F)=0,\end{equation} where  $c_n$ is a constant depending only on $n$, and $\alpha$ is the contact form.
\end{lemma}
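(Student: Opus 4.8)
The goal is to identify the defining pair $(C,T)$ of the valuation $\mu_1^F\in\Val_1^+(\RR^n)$, where $F$ is viewed as an even function on the sphere via $F|_{S^{n-1}}$ and $\mathcal C$ is the cosine transform. The natural strategy is to pin down $\mu_1^F$ by its Klain function and then exhibit a translation-invariant, $1$-homogeneous valuation with the claimed current whose Klain function matches. First I would recall that for a $1$-dimensional subspace $E=\RR v\subset\RR^n$ (with $v\in S^{n-1}$) the Holmes-Thompson line element is $\vol_E^{HT}$, and compute its density: since all notions of volume agree in dimension $1$, $\vol_E^{HT}$ is the Lebesgue measure on $E$ normalized by the length of the segment $\{t v : F(tv)\le 1\}$, i.e. proportional to $\tfrac{1}{F(v)}$ times the Euclidean length element. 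Hence the Klain function of $\mu_1^F$ on $\Grass_1(\RR^n)=\PP_+(\RR^n)$ is, up to a dimensional constant, $1/F$. This is the target invariant the candidate current must reproduce.

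Next I would write down, for an arbitrary even $g\in C^\infty_+(S^{n-1})$, the translation-invariant valuation $\psi_g$ whose defining current is $T_g := g\,\alpha\wedge\vol_{S^{n-1}}$, $C_g:=0$ (one checks directly that this pair satisfies the two conditions of Bernig--Bröcker: $T_g$ is vertical since it contains the factor $\alpha$ wedged with a top vertical form, it is closed for degree reasons on $\PP_{\RR^n}=S\RR^n$ with the flat contact structure, and $\pi_* T_g=0=dC_g$). The assignment $g\mapsto \psi_g$ is linear and $\SO(n)$-equivariant, and by Klain injectivity it is determined by the Klain function $\Kl_{\psi_g}$. Evaluating $\psi_g$ on a segment in direction $v$ reduces, via the conormal cycle of the segment and the explicit form $T_g$, to an integral of $g$ over the great subsphere $\{u\in S^{n-1}: \langle u,v\rangle=0\}$ — equivalently, after the standard manipulation, to a hemispherical-type or cosine-type integral of $g$ against $|\langle u,v\rangle|$. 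The precise computation shows $\Kl_{\psi_g}(v)$ is a constant multiple of $(\mathcal C g)(v)$.

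Combining the two computations: we want $\Kl_{\psi_g}=\Kl_{\mu_1^F}$, i.e. $c\,(\mathcal C g)(v)=c'/F(v)$; since $\mathcal C$ is invertible on even functions (\cite{groemer}), this forces $g=c_n\,\mathcal C^{-1}(1/F)$ for the appropriate dimensional constant $c_n$, and then $\mu_1^F=\psi_g$, which is exactly \eqref{eq:linear_current}. I would remark that $F$ in the statement should be read as $1/F|_{S^{n-1}}$ or adjust notation to match the paper's convention (this is the only place a reader might stumble).

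\textbf{Main obstacle.} The routine-looking but genuinely delicate step is the explicit evaluation of $\psi_g$ on a segment: one must compute the conormal cycle $\nc$ of a compact $1$-dimensional piece, pair it against $T_g=g\,\alpha\wedge\vol_{S^{n-1}}$, and correctly track all the geometric normalizations (the factor coming from $\vol_{S^{n-1}}$ on the fiber sphere, the contribution of $\alpha$, and the $1/\omega_n$ in the Holmes-Thompson normalization) to land on the cosine transform with the right constant — equivalently, to reconcile with the $\Klain$-map computation of \cite{SolanesWannerer:Spheres}. An alternative that sidesteps the segment computation is to invoke the \'Alvarez Paiva--Fernandes / Bernig description of $\mu_1^F$ via its Crofton measure: the Holmes-Thompson $\mu_1^F$ has a Crofton density on the space of hyperplanes given in terms of the dual body $B_F^\circ$, and translating that Crofton representation into a defining current again yields $\mathcal C^{-1}(1/F)$ against $\alpha\wedge\vol_{S^{n-1}}$. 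Either route reduces the statement to the invertibility of $\mathcal C$ on even functions plus bookkeeping of constants; the bookkeeping is where all the real care is needed.
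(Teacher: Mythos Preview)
Your computation of the Klain function of $\mu_1^F$ is wrong, and this is not a notational issue as you suggest. On a line $E=\RR v$ with $v\in S^{n-1}$, the unit ball of the induced norm is $\{tv:|t|\le 1/F(v)\}$, which has Euclidean length $2/F(v)$. The Holmes--Thompson volume is the Lebesgue measure for which this ball has measure $\omega_1=2$; hence $\vol_E^{HT}=F(v)\cdot\vol_E^{\mathrm{Eucl}}$, and the Klain section of $\mu_1^F$ is $F$, not $1/F$. The paper states this explicitly (``As the Klain section of $\mu_1^F$ is $F$\ldots''), and the formula $T(\mu_1^F)=c_n\,\mathcal C^{-1}(F)\,\alpha\wedge\vol_{S^{n-1}}$ is consistent with it. Once you correct this, your Klain-matching argument goes through and yields the right answer.

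Your approach differs from the paper's. The paper does not evaluate on segments at all: it starts from the Crofton representation $\mu_1^F(K)=\int_{\AGrass_{n-1}(V)}\chi(K\cap E)\,dm(E)$ with $m=\mathcal C^{-1}F$, rewrites this via Alesker--Poincar\'e duality as $\mu_1^F=\int_{\AGrass_{n-1}(V)}\chi_E\,dm(E)$ in $\mathcal V^{-\infty}(V)$, and then reads off the current directly as the $m$-weighted integral of the normal cycles $\nc(u^\perp+tu)=\lcur(u^\perp+tu)\times\{\pm u\}\rcur$. This is precisely the ``alternative'' you mention at the end, and it is cleaner than your primary route: it avoids having to pass from $(C,T)$ back to a representative $(\mu,\omega)$ in order to evaluate on a test body, and it avoids the fiber-integral computation you flag as the main obstacle (which, incidentally, is the $m=1$ case of the paper's Lemma on $A^0$). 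Your Klain-matching route is a legitimate alternative, but the Crofton/duality argument is what actually proves the lemma with minimal bookkeeping.
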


\begin{proof} Write the cosine transform in a $\mathrm{GL}(V)$-equivariant form (see \cite{alesker_bernstein}):
	$$\mathcal C\colon C^\infty (\Grass_{n-1}(V), \Dens(V/E)\otimes |\omega|) \to C^\infty( \Grass_1(V), \Dens(E)),$$
where $|\omega|=\Dens(T_E\Grass_{n-1}(V))$ is the line bundle of smooth densities on $\Grass_{n-1}(V)$. 
	As the Klain section of $\mu_1^F$ is $F$, it holds that $$\mu_1^F(K)=\int_{\AGrass_{n-1}(V)} \chi(K\cap E)dm(E)$$ for $K\in\mathcal K(V)$, where $m=\mathcal C^{-1}F$ is the Crofton measure of $\mu_1^F$, and we use the natural identification between smooth, translation-invariant measures on the affine Grassmannian $\AGrass_{n-1}(V)$ with $C^\infty (\Grass_{n-1}(V), \Dens(V/E)\otimes |\omega|)$. 
	
	Recall that the generalized valuations $\mathcal V^{-\infty}(V)$ are the dual space to the smooth, compactly supported valuations $\mathcal V_c^\infty(V)$ equipped with the weak topology, see \cite{alesker4}. The representation of valuations through its pair of defining currents $(C,T)$ extends naturally to generalized valuations, except $C$ is now a distribution and $T$ a current, see \cite{alesker_bernig}. Note that any (not necessarily compact) submanifold with corners $X\subset V$ defines an element $\chi_X\in\mathcal V^{-\infty}(V)$ through the evaluation map, and has $C(\chi_X)=\mathbbm 1_X$, $T(X)= \nc(X)$. 
	
	By Alesker-Poincare duality we find that for any $\psi \in \mathcal V^\infty_c(V)$, $$\langle \mu_1^F,\psi\rangle=\int_{\AGrass_1(V)} \psi(E) dm(E).$$ That is, $\mu_1^F=\int_{\AGrass_1(V)} \chi_E dm(E)$, implying \[T(\mu^F_1)=\int_{\AGrass_1(V)} T(\chi_E)dm(E)=\int_{\AGrass_1(V)} \nc(E)dm(E).\]
Using the Euclidean structure to identify $m = C^{-1} F$ with an element of $C^\infty(S^{n-1})$, 
and noting that $ \nc(u^\perp+tu) = \lcur (u^\perp+tu) \times \{\pm u\} \rcur$,
we conclude that 
  \[\langle T(\mu_1^F),\eta \rangle =  c_n \int_{\RR^n\times S^{n-1}}   C^{-1}(F)      \, \alpha \wedge  \vol_{S^{n-1}} \wedge \eta \]
 for every  $\eta\in \Omega^{n-1}_c(\RR^n\times S^{n-1})$.
\end{proof}

\section{Computing the restriction}\label{sec:fiber_integral}

\subsection{The fiber integrals}\label{subsec:coefficients_integral_expressions}
Let $M\subset (V,F)$ be an immersed submanifold of dimension $m$.  Recall $V=\RR^n$ has a fixed Euclidean structure. 

Fix a point $p\in M$ and local coordinates $(x^0, x^1,\dots,x^{m-1})$ for $M$ such that
$$ \left.\partial_{x^i}\right|_p = e_i, \qquad i=0,\dots,m-1$$
for some  orthonormal basis  $e_0,e_1,\ldots, e_{n-1}$ of $V$.

Let us fix also a point $(p,u)\in SM$. Without loss of generality we may assume that $u=e_0$. We denote by ${\widetilde\partial _{x^i}}$ the horizontal lift of $ \partial _{x^i}$ to $SM$ with respect to the induced Riemannian structure.  
Note that  
\begin{equation}\label{eq:basis4}{\widetilde\partial _{x^0}}, \dots, {\widetilde \partial _{x^{m-1}}}, e_1, \dots,e_{m-1}\end{equation}
form a basis of $T_{(p,u)} SM$. To construct these lifts, let 
$\gamma_i\colon I\to M$ be a smooth curve with $\gamma_i(0)=p$ and $\gamma_i'(0)= \frac{\partial}{\partial x^i}$. 
Then 
$$ { \widetilde\partial _{x^i}} = \dt P_\gamma u(t),$$
where $P_\gamma u$ is the curve in $SM$ defined via parallel transport of $u$ along $\gamma$. 

We will also need the second fundamental form $h\colon T_pM\otimes T_pM\to T_p M^\perp$ of $M$, 
$$h(X,Y)= (\overline\nabla_XY )^\perp,$$
where $\overline\nabla$ denotes the standard connection in  $\RR^n$.  We write 
$h_{ij}= h(e_i,e_j)$, and $h_{ij}^N=\langle h_{ij},N\rangle$.

Let $P^k_{m-1}$ be the set of increasing functions $\sigma:\{1,\dots,k\}\to \{1,\dots,m-1\}$. 
The complement is the obvious map $P^{k}_{m-1}\to  P^{m-1-k}_{m-1}$, $\sigma\mapsto\sigma^c$.

Let $\theta_0,\dots,\theta_{m-1},\omega_{1,0},\dots,\omega_{m-1,0}\in T_{(p,u)}^*SM$ denote the basis dual to \eqref{eq:basis4}. By \eqref{eq:restriction_valuation_CT}, the restrictred valuation $\mu^F_1|_M$ has $T_M^F:=T(\mu^F_1|_M)=\beta_*\bar\alpha^* T(\mu_1^F)$. As it is a vertical $m$-form, there are  coefficients $A^k_{\sigma\tau}$ with $0\leq k\leq m-1$, $\sigma\in P^k_{m-1}$, $\tau\in P^{m-1-k}_{m-1}$ such that
\begin{equation}\label{eq:restriction_current}T_M^F =  \theta_0\wedge \sum_{k=0}^{m-1}\sum_{\sigma\in P^k_{m-1}}\sum_{\tau\in P^{m-1-k}_{m-1}} A^k_{\sigma\tau}\theta_{\sigma (1)}\wedge\dots\wedge\theta_{\sigma (k)}\wedge \omega_{\tau(1),0}\wedge\dots\wedge\omega_{\tau (m-1-k),0}.\end{equation}

 \begin{remark}
		More generally, the coefficients $A^k_{\sigma\tau}$ are naturally functions on the orthonormal frame bundle of $M$, that is, they are determined by the fixed  immersion $M\looparrowright V$, the norm $F$, and the auxiliary Euclidean structure on $V$, which induces a Riemannian structure on $M$.
	\end{remark}
\begin{proposition} It holds for all $k,\sigma,\tau$ that
	\begin{align}\label{eq:general_coefficient}  &A^k_{\sigma\tau}=\epsilon c_n \int_{0}^{\pi/2} \int_{S(T_pM^\perp)}\det h_{\sigma\tau^c}^N(p) \cos^{m-k}\phi  \sin^{n-m-1+k} \phi   \;  \mathcal C^{-1}F(\cos\phi u + \sin\phi N)  \; dN \; d\phi,
	\end{align}
where $c_n$ is as in \eqref{eq:linear_current}, and $\epsilon\in\{\pm1\}$ depends explicitly on $\sigma,\tau$.
\end{proposition}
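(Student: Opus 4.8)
The plan is to compute the current $T_M^F=\beta_*\bar\alpha^*T(\mu_1^F)$ explicitly in the local frame \eqref{eq:basis4}, starting from the formula $T(\mu_1^F)=c_n\,\mathcal C^{-1}(F)\,\alpha\wedge\vol_{S^{n-1}}$ of Lemma~\ref{lem:Tmu1}. First I would describe the geometry of the blow-up $\widetilde{\mathbb P_V\times_V M}$ over the point $(p,u)\in SM$. A point of the exceptional set over $(p,u)$ corresponds to a unit conormal direction $\nu\in S(T_pM^\perp)$ together with an angle recording the ``speed of approach'' to $N^*M$; concretely one parametrizes the relevant stratum of $\bar\alpha$-image by $\xi(\phi)=\cos\phi\,u+\sin\phi\,N$ with $\phi\in[0,\pi/2]$, $N\in S(T_pM^\perp)$, so that $\phi=0$ is the honest cotangent direction $u$ on $M$ and $\phi=\pi/2$ is the conormal sphere. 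Thus the fiber of $\beta$ over $(p,u)$ is (a bundle over) $[0,\pi/2]\times S(T_pM^\perp)$, and $\beta_*$ is integration over this fiber.

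Next I would pull back $\alpha\wedge\vol_{S^{n-1}}$ under $\bar\alpha$ and express the result in coordinates adapted to the splitting $T_pV=T_pM\oplus T_pM^\perp$. The contact form $\alpha$ pairs $\xi(\phi)$ with $d\pi$, so $\bar\alpha^*\alpha$ restricted to the horizontal directions $\widetilde\partial_{x^i}$ picks out $\cos\phi\,\theta_0+(\text{conormal components})$; since the $\theta_i$ for $i\ge1$ and the vertical $\omega_{j,0}$ span the rest, one gets the leading factor $\theta_0$ in \eqref{eq:restriction_current} together with a $\cos\phi$ from the $u$-component of $\xi$. The volume form $\vol_{S^{n-1}}$ at the point $\xi(\phi)$ decomposes, via the fibration of $S^{n-1}$ near the $(n-m)$-sphere $S(T_pM^\perp)$, into $d\phi$, the spherical volume $dN$ on $S(T_pM^\perp)$ (of dimension $n-m-1$, giving the $\sin^{n-m-1}\phi$ Jacobian up to the correction below), and the ``horizontal'' part along $T_pM/\langle u\rangle$. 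The horizontal part is where the second fundamental form enters: the derivative of the Gauss-type map $x\mapsto$ (parallel-transported $u$, viewed in $S^{n-1}$) along $\widetilde\partial_{x^i}$ is governed by $h_{0i}$, and more generally differentiating $\xi(\phi)=\cos\phi\,u+\sin\phi\,N$ in the base directions produces the matrix $\sin\phi\cdot h^N_{ij}$ acting on the $e_1,\dots,e_{m-1}$ block. Expanding the resulting wedge of $\theta$'s and $\omega$'s, the coefficient of $\theta_{\sigma(1)}\wedge\cdots\wedge\theta_{\sigma(k)}\wedge\omega_{\tau(1),0}\wedge\cdots$ forces a choice of which base indices are ``differentiated'' (contributing an $\omega$ from $\bar\alpha^*d\phi$-type terms after blow-up — these are the $k$ factors $\theta_{\sigma(j)}$ surviving) versus which feed into the $h^N$-block (the remaining $m-1-k$ indices, i.e. $\tau^c$ paired against $\tau$), yielding the minor $\det h^N_{\sigma\tau^c}$, with an explicit sign $\epsilon(\sigma,\tau)$ coming from reordering the wedge. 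Counting powers of $\cos\phi$ (one from $\alpha$, plus $m-1-k$ from the cosine-weighted horizontal directions contributing to the $h^N$ block once the $\sin\phi$'s are extracted) and of $\sin\phi$ (one per $h^N$-row, i.e. $m-1-k$, plus the $n-m-1$ from the normal-sphere Jacobian, plus $k$ from the exceptional-divisor directions) gives precisely $\cos^{m-k}\phi\,\sin^{n-m-1+k}\phi$. Finally $\beta_*$ integrates out $\phi$ and $N$ against $\mathcal C^{-1}F(\cos\phi\,u+\sin\phi\,N)$, producing \eqref{eq:general_coefficient}.

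The main obstacle I expect is bookkeeping: correctly tracking the oriented blow-up along $N^*M$ so that the boundary contributions at $\phi=0$ and $\phi=\pi/2$ behave well (the integrand must extend smoothly across the exceptional divisor, which is why the powers $\sin^{n-m-1+k}\phi$ with $n-m-1+k\ge0$ appear naturally), and pinning down the sign $\epsilon(\sigma,\tau)$ — the exponents and the measure-theoretic content are essentially forced by homogeneity and $\SO(T_pM^\perp)$-equivariance once the blow-up picture is set up, but the combinatorial sign requires care with the dual basis $\theta_0,\dots,\theta_{m-1},\omega_{1,0},\dots,\omega_{m-1,0}$ and the chosen orientations of the fibers of $\beta$. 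A secondary point is justifying that only the first normal space $\mathcal N^1_pM$ (where $h^N$ lives) enters at this order, which is immediate here since $h$ is the only second-order invariant appearing in $\bar\alpha^*$ of a $1$-jet of the contact form, but will matter for consistency with Theorem~\ref{thm:jet_weyl}.
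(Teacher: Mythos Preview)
Your overall strategy matches the paper's: parametrize the $\beta$-fiber over $(p,u)$ by $(\phi,N)\in[0,\pi/2]\times S(T_pM^\perp)$, pull back $c_n\,\mathcal C^{-1}F\cdot\alpha\wedge\vol_{S^{n-1}}$, evaluate against the frame dual to \eqref{eq:basis4}, and integrate. However, your bookkeeping is garbled, and the paper sidesteps a complication you emphasize.

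First, the paper observes that since $T_M^F$ is a \emph{smooth} form, the blow-up along $N^*M$ may be ignored entirely: one simply computes the honest fiber integral for $\beta\colon SV|_M\setminus TM^\perp\to SM$. Your discussion of tracking the exceptional divisor and boundary behavior at $\phi=0,\pi/2$ is therefore unnecessary, and there is no separate ``$k$ from the exceptional-divisor directions'' contribution to the power of $\sin\phi$.

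Second, you have the roles of $\sigma$ and $\tau$ reversed. In the paper's proof one lifts $\widetilde{\partial_{x^i}}$ to $\widehat{\partial_{x^i}}=(e_i,v_i)\in T_{(p,\cos\phi u+\sin\phi N)}SV|_M$ with
\[
v_i=\cos\phi\,h_{0i}+\sin\phi\Bigl(-\sum_{j} h^N_{ij}e_j+\nabla^\perp_{e_i}N\Bigr),
\]
and lifts the vertical $e_{\tau(j)}$ to $\hat e_{\tau(j)}=(0,\cos\phi\,e_{\tau(j)})$. Thus it is the $k$ horizontal vectors $\widehat{\partial_{x^{\sigma(j)}}}$ (dual to $\theta_{\sigma(j)}$) whose tangential second components $-\sin\phi\,h^N_{\sigma(j),\tau^c(l)}$ fill the $k\times k$ block $h^N_{\sigma\tau^c}$, contributing $\sin^k\phi$; and it is the $m-1-k$ vertical vectors $\hat e_{\tau(j)}$ (dual to $\omega_{\tau(j),0}$) that contribute $\cos^{m-1-k}\phi$. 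With the $\cos\phi$ from $\alpha(\widehat{\partial_{x^0}})$ and the Jacobian $\sin^{n-m-1}\phi\,dN\,d\phi$, one obtains exactly $\cos^{m-k}\phi\,\sin^{n-m-1+k}\phi$. Your own count of $\sin$-powers, $(m-1-k)+(n-m-1)+k=n-2$, does not equal $n-m-1+k$ except when $k=m-1$; the discrepancy reflects both the swap of $k\leftrightarrow m-1-k$ in the $h^N$-block and the spurious blow-up term. Likewise ``$\tau^c$ paired against $\tau$'' cannot yield the square minor $\det h^N_{\sigma\tau^c}$, which is indexed by $\sigma$ and $\tau^c$, each of size $k$.
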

\begin{proof}
	As the form $T_M^F$ is smooth, we may ignore the blow-up construction of the restriction.
	Thus by Lemma \ref{lem:Tmu1}, we have to  compute the pushforward
	of $ \mathcal C^{-1}F\, \alpha \wedge  \vol_{S^{n-1}}$ under the map $\beta\colon SV|_M\setminus TM^\perp\to SM$.
		
	Choose a smooth local section $N$ of the normal sphere bundle $S(TM^\perp)$.
	We define elements of $T_{p,\cos \phi u + \sin\phi N_p} SV|_M$ by
	\begin{align*} {\widehat\partial_{x^i}} & = \dt(\gamma_i(t),  \cos\phi P_{\gamma_i} u + \sin \phi N\circ\gamma_i), \qquad i=0,\dots,m-1\\ 
		\hat e_i & = \dt (p, \cos\phi(\cos t u + \sin t e_i) + \sin \phi N) = (0, \cos\phi e_i),\qquad i=1,\dots,m-1,
	\end{align*}
where $\gamma_i\colon I\to M$ is a smooth curve with $\gamma_i(0)=p$ and $\gamma_i'(0)= \frac{\partial}{\partial x^i}$.	
	We may express the second coordinate $v_i$ of $ {\widehat\partial_{x^i}}=(e_i,v_i)$ explicitly in terms of the second fundamental form, namely
	\begin{equation}\label{eq:comp_lift} v_i = \cos\phi h_{0i} + \sin\phi \left(  -\sum_{j=0}^{m-1}h_{ij}^N e_j + \nabla^\perp _{e_i} N\right)
	\end{equation}
	where $\nabla^\perp$ denotes the normal connection.
 	Indeed, if $\overline \nabla$ denotes the standard connection of the ambient space $V$, then for $0\leq i\leq m-1$
	$$\dt \langle P_{\gamma_i} u, e_j\rangle = \langle \overline\nabla_{e_i} P_{\gamma_i} u, e_j\rangle 
	=\langle \nabla_{e_i} P_{\gamma_i} u, e_j\rangle =0, \qquad j=0,\dots,m-1,$$
since for this range of indices,  $e_j$ is tangent to $M$ at $p$. Since $e_m,\ldots,e_{n-1}\in T_pM^\perp$, by the definition of the second fundamental form, we have 
	$$\dt \langle P_{\gamma_i} u, e_j\rangle = \langle \overline\nabla_{e_i} P_{\gamma_i} u, e_j\rangle 
	=  \langle h_{0i},e_j\rangle , \qquad j=m,\ldots, n-1.$$

	To compute the constants $A^k_{\sigma\tau}$, we first note that 
	$$e_1,\dots,e_{m-1}, -\sin\phi u + \cos\phi N, w_1,\ldots, w_{n-m-1}$$ is an orthonormal basis of $ T_{\cos\phi u + \sin\phi N} S^{n-1}$ if $N,w_1,\ldots, w_{n-m-1}$ is an orthonormal basis of $T_pM^\perp$, and  
	$$ \alpha( {\widehat\partial_{x^i}}) = \langle \cos\phi u+\sin\phi N, e_i\rangle= \begin{cases}
	\cos\phi, &  i=0,\\
	0, & i=1,\dots,m-1                                                            \end{cases}$$
	
	We integrate along the fiber, parametrized by $\gamma(\phi,N)=(p, \cos\phi u+\sin \phi N)$ so that $\frac{\partial}{\partial\phi}\gamma(\phi,N)=-\sin\phi u +\cos\phi N$.
	First, we compute
	
	\begin{align*}  &B(\phi,N):=\alpha \wedge  \vol_{S^{n-1}}({\widehat\partial_{x^0}}, (\widehat\partial_{x^{\sigma(j)}})_{j=1}^k,(\hat e_{\tau(j)})_{j=1}^{m-1-k}, \pder{\phi}\gamma(\phi,N),  w_1,\ldots,  w_{n-m-1}) 
\\&=\cos\phi\cdot \cos^{m-1-k}\phi \cdot \vol_{S^{m-1}}((v_{\sigma( j)})_{j=1}^k,( e_{\tau(j)})_{j=1}^{m-1-k})
\\&=(-1)^k\cos^{m-k}\phi\sin^k\phi\cdot \vol_{S^{m-1}}( \sum_{j=0}^{m-1} h^N_{\sigma( 1),j}e_j,\dots,\sum_{j=0}^{m-1} h^N_{\sigma (k),j}  e_j, (e_{\tau( j)})_{j=1}^{m-1-k})
\\&=\epsilon \cos^{m-k}\phi\sin^k \phi\det h^N_{\sigma\tau^c} ,
 \end{align*}
for some $\epsilon\in\{\pm1\}$.
	 This yields
		\begin{align*}A^k_{\sigma\tau} & =c_n\beta_*\bar\alpha^*( \mathcal C^{-1}F\cdot \alpha \wedge  \vol_{S^{n-1}} )({\widetilde\partial_{x^0}}, {\widetilde \partial _{x^{\sigma( 1)}}},\dots,{\widetilde\partial_{x^{\sigma( k)}}},  e_{\tau(1)},\dots, e_{\tau(m-k-1)})\\
		&= c_n \int_{0}^{\pi/2} \int_{S(T_pM^\perp)}  \sin^{n-m-1} \phi B(\phi,N) \;  \mathcal C^{-1}F(\cos\phi u + \sin\phi N)\;dN\; d\phi  \\
		& = \epsilon c_n \int_{0}^{\pi/2} \int_{S(T_pM^\perp)}\det h_{\sigma\tau^c}^N(p) \cos^{m-k}\phi  \sin^{n-m-1+k} \phi   \;  \mathcal C^{-1}F(\cos\phi u + \sin\phi N)  \; dN \; d\phi.
	\end{align*}
\end{proof}
The exact constant $c_n$ and sign $\epsilon$ will play no role in the following, and we omit them.
In particular, we write
	
\begin{align*} A^0 = \int_{0}^{\pi/2} \int_{S(T_pM^\perp)}  \cos^m  \phi\sin^{n-m-1}\phi  \;  \mathcal C^{-1}F(\cos\phi u + \sin\phi N) \; dN \; d\phi,
\end{align*}

\begin{align*} A^1_{\sigma\tau} =  \langle h_{\sigma(1),\tau^c}, \int_{0}^{\pi/2} \int_{S(T_pM^\perp)} \cos^{m-1}\phi  \sin^{n-m} \phi    N    \mathcal C^{-1}F(\cos\phi u + \sin\phi N)  dN  d\phi \rangle .
\end{align*}

\subsection{Simplifying the fiber integrals}
Fix $S^{m-1}\subset S^{n-1}$. For almost every $v\in S^{n-1}$ we may write  $v= \cos \phi u + \sin\phi N$ with $u\in S^{m-1}$, $N\in S^{n-m-1}$, $\phi\in[0,\frac\pi 2]$ in a unique way.
For $f\in C^\infty(S^{n-1})$, $u\in S^{m-1}$ and $N\in S^{n-m-1}$ we will write \[ \frac{\partial ^kf }{\partial N^k}(u):=\left.\frac{d^k}{d\phi^k}\right|_{\phi=0}f(\cos\phi u+\sin\phi N).\]
More generally, if $\tilde N=\lambda N$ with $\lambda\in\RR$, we write 
\begin{equation}\label{eq:directional_derivative} \frac{\partial ^k f}{\partial \tilde N^k}(u)=\lambda^k\frac{\partial ^kf }{\partial  N^k}(u).\end{equation}

The coefficient $A^0$ is straightforward to simplify.
\begin{lemma}\label{lem:a0_coeff}
	$A^0|_{S_pM}=\mathcal C_m^{-1}(F|_{S_pM})$.
	\end{lemma}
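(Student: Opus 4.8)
The plan is to recognize the integral defining $A^0$ as the composition of $\mathcal C^{-1}_n F$ with an explicit averaging operator over the fibers of the map $v \mapsto u$, restricted to $S_pM = S^{m-1}$, and then to identify this averaging operator with the cosine transform $\mathcal C_n$ followed by $\mathcal C_m^{-1}$ in a way that collapses to $\mathcal C_m^{-1}(F|_{S_pM})$.

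First I would set $g = \mathcal C_n^{-1} F \in C^\infty_+(S^{n-1})$, so that $\mathcal C_n g = F$, and rewrite
$$A^0(u) = \int_0^{\pi/2}\int_{S^{n-m-1}} \cos^m\phi \, \sin^{n-m-1}\phi \; g(\cos\phi\, u + \sin\phi\, N)\, dN\, d\phi,$$
for $u \in S^{m-1}$. The key observation is that this is, up to the known normalization constants absorbed earlier, precisely the integral expressing the value at $u$ of the $m$-dimensional cosine transform of the restriction to $S^{m-1}$ of a suitably weighted integral of $g$; more precisely, one uses the coarea-type decomposition of the round measure on $S^{n-1}$ adapted to the splitting $\RR^n = \RR^m \oplus \RR^{n-m}$, under which $du_{S^{n-1}} = \cos^{?}\phi \sin^{n-m-1}\phi\, d\phi\, du_{S^{m-1}}\, dN_{S^{n-m-1}}$ with the appropriate exponent. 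The extra factor $\cos^m\phi$ present in $A^0$ beyond the Jacobian is exactly $|\langle v, u\rangle|$ raised to... — here one must check that the total $\phi$-weight matches $|\langle v,u\rangle| \cdot (\text{Jacobian})$ when $u \in S^{m-1}$; since $\langle \cos\phi\, u + \sin\phi\, N, u\rangle = \cos\phi$ for $u\in S^{m-1}\perp N$, the factor $\cos^m\phi$ decomposes as $\cos\phi \cdot \cos^{m-1}\phi$, and one recognizes $\cos^{m-1}\phi$ as (part of) the Jacobian of the fibration $S^{n-1}\dashrightarrow S^{m-1}$, $v\mapsto v/|v_{\RR^m}|$.

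Thus I would argue: the inner integral $\int_{S^{n-m-1}} g(\cos\phi\, u+\sin\phi\, N)\,dN$, together with the $\phi$-integration against the Jacobian weight, is exactly the pushforward of the measure $g\, du_{S^{n-1}}$ under the partially-defined projection $\pi\colon S^{n-1}\dashrightarrow S^{m-1}$; call the resulting density $G \in C^\infty(S^{m-1})$. The remaining factor $\cos\phi = |\langle v, u\rangle|$ shows that what we actually compute is $\int_{S^{m-1}} |\langle w, u\rangle|\, G(w)\, dw = \mathcal C_m(G)(u)$ — wait, this requires that the fiber pushforward of $|\langle v,u\rangle| g\, du_{S^{n-1}}$ equals $\mathcal C_m(G)$, which in turn rests on the standard fact (the "projection formula" for the cosine transform, see e.g. the proof that $\mathcal C$ commutes with the natural maps between Grassmannians, as recalled via the $\mathrm{GL}(V)$-equivariant form in Lemma \ref{lem:Tmu1}) that $\mathcal C_n$ intertwines the inclusion $S^{m-1}\hookrightarrow S^{n-1}$ with $\mathcal C_m$: concretely, $\big(\mathcal C_n h\big)\big|_{S^{m-1}} = \mathcal C_m\big(\text{fiber-integral of } h\big)$ for $h\in C^\infty(S^{n-1})$. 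Applying this with $h = g$ gives $\mathcal C_m(G) = (\mathcal C_n g)|_{S^{m-1}} = F|_{S_pM}$, hence $A^0|_{S_pM} = G = \mathcal C_m^{-1}(F|_{S_pM})$, which is the claim.

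The main obstacle will be pinning down the precise form of the cosine-transform projection identity and checking that the Jacobian exponents in the spherical fibration $S^{n-1}\dashrightarrow S^{m-1}$ combine with the weights $\cos^m\phi\,\sin^{n-m-1}\phi$ to reproduce exactly $|\langle v, u\rangle|\, d\mu_{\text{fib}}$ with no leftover power of $\cos\phi$ or $\sin\phi$; this is the computation that forces the exponent $m$ (rather than $m-1$ or $m+1$) on $\cos\phi$ and is the reason the lemma holds on the nose. Once the bookkeeping of these exponents and of the constant $c_n$ (which the paper has agreed to suppress) is settled, the rest is the formal manipulation above.
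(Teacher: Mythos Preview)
Your strategy is exactly the paper's: show $\mathcal C_m A^0 = F|_{S_pM}$ by combining the spherical Jacobian $dv = \cos^{m-1}\phi\,\sin^{n-m-1}\phi\,du\,d\phi\,dN$ with the identity $|\langle u',v\rangle| = |\langle u',u\rangle|\cos\phi$ for $u'\in S^{m-1}$.

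However, your formulation via the intermediate quantity $G$ contains a bookkeeping slip. The plain fiber pushforward $G$ of $g\,dv$ under $\pi$ carries the weight $\cos^{m-1}\phi$, whereas $A^0$ carries $\cos^m\phi$, so $A^0\neq G$. Correspondingly, the projection identity you write, $(\mathcal C_n h)|_{S^{m-1}} = \mathcal C_m(\text{fiber integral of }h)$, is off by one power of $\cos\phi$: the correct right-hand side is $\mathcal C_m$ applied to the fiber integral of $|\langle v,\cdot\rangle|\,h$, which for $h=g$ is precisely $A^0$, not $G$. Your two errors compensate in the final line, but neither intermediate claim holds as stated.

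The paper sidesteps $G$ and computes $\mathcal C_m A^0(u')$ directly: the kernel $|\langle u',u\rangle|$ from $\mathcal C_m$ absorbs the extra $\cos\phi$ in $A^0$ to produce $|\langle u',v\rangle|$, the remaining $\cos^{m-1}\phi\,\sin^{n-m-1}\phi\,du\,dN\,d\phi$ is exactly $dv$, and one reads off $\mathcal C_m A^0(u') = \int_{S^{n-1}}|\langle u',v\rangle|\,g(v)\,dv = \mathcal C_n g(u') = F(u')$. This is the clean version of what you are reaching for.
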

\begin{proof}
Take $u'\in S_pM$ and compute
\begin{align*}& \mathcal C_mA^0(u')\\&=\int_0^{\pi/2} \int_{S_pM}\int_{S(T_pM^\perp)} |\langle u',u\rangle|\cos^m\phi\sin^{n-m-1}\phi \mathcal C_n^{-1}F(\cos \phi u+\sin\phi N) dN dud\phi 
\\&= \int_0^{\pi/2}\int_{S_pM}\int_{S(T_pM^\perp)}\cos^{m-1}\phi\sin^{n-m-1}\phi|\langle u',v\rangle|\mathcal C_n^{-1}F(v)  dN dud\phi, 
\end{align*}
where $v=\cos \phi u+\sin\phi N$ is the parametrization $v:  S^{m-1}\times [0,\pi/2]\times  S^{n-m-1}\to S^{n-1}$.
Since $dv = \cos^{m-1}\phi\sin^{n-m-1}\phi  dN dud\phi$, we conclude that 
\[\mathcal C_mA^0(u') = \mathcal C_n(\mathcal C_n^{-1}F)(u')=F(u').\qedhere \]

	\end{proof}

\begin{corollary}\label{cor:a0_intrinsic}
	The coefficient $A^0$ is determined by $F|_M$.
\end{corollary}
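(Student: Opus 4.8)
The plan is to read off Corollary~\ref{cor:a0_intrinsic} directly from Lemma~\ref{lem:a0_coeff}: the content is simply that the right-hand side of the identity $A^0|_{S_pM} = \mathcal C_m^{-1}(F|_{S_pM})$ refers only to data intrinsic to $(M,F|_M)$ together with the fixed auxiliary Euclidean structure on $V$ (which induces the Riemannian metric on $M$). So the ``proof'' is really a matter of parsing the lemma.

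First I would observe that $A^0$, by its explicit integral formula, is a well-defined function on the unit tangent bundle $SM$ and not merely on the orthonormal frame bundle of $M$: the integrand involves only the base point $(p,u)\in SM$ and an auxiliary variable $N$ ranging over the entire normal unit sphere $S(T_pM^\perp)$, so no choice of frame is needed (equivalently, in the notation of the Proposition, $A^0 = A^0_{\emptyset,\mathrm{id}}$ and the determinant of second fundamental forms appearing in the general coefficient is the empty determinant $1$). Since $SM$ is the disjoint union of its fibers $S_pM$ over $p\in M$, it therefore suffices to determine $A^0$ on each fiber $S_pM$.

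Next I would invoke Lemma~\ref{lem:a0_coeff}, which identifies $A^0$ on $S_pM = S(T_pM)\cong S^{m-1}$ with $\mathcal C_m^{-1}(F|_{S_pM})$. Here $F|_{S_pM}$ is the ambient norm $F$ evaluated on the Euclidean-unit vectors of $T_pM$; read through the Euclidean identification $T_pM\cong\RR^m$ this is exactly the restricted Finsler norm $F|_{T_pM}$, hence a piece of the datum $F|_M$. Since $F$ is even, $F|_{S_pM}$ is an even function and the inverse cosine transform applies; moreover $\mathcal C_m^{-1}$ depends only on $m$ and on the Euclidean inner product on $T_pM$, and not on the immersion $M\looparrowright V$ nor on the behaviour of $F$ in directions transverse to $M$. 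Combining these points, $A^0|_{S_pM}$ is a function of $F|_{T_pM}$ alone, so $A^0$ is determined by $F|_M$.

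I do not expect a genuine obstacle: all the substance is already in Lemma~\ref{lem:a0_coeff}, and the corollary amounts to recognizing that its right-hand side involves only intrinsic data. The only points requiring a moment's care are the two bookkeeping remarks above --- that $A^0$ descends to a function on $SM$, and that the inverse cosine transform in Lemma~\ref{lem:a0_coeff} is the one on the $(m-1)$-dimensional fiber sphere $S_pM$ rather than on $S^{n-1}$.
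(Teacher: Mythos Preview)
Your proposal is correct and follows exactly the paper's intended reasoning: the paper gives no separate proof for this corollary, treating it as an immediate consequence of Lemma~\ref{lem:a0_coeff}. Your unpacking of why the right-hand side is intrinsic (that $A^0$ descends to $SM$, that $F|_{S_pM}$ is part of the induced Finsler data, and that $\mathcal C_m^{-1}$ uses only the induced Euclidean structure on $T_pM$) is precisely the content being asserted.
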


As a warm-up for the general case, we compute $A^1_{\sigma\tau}$. We write $\partial_\phi|_{\phi_0}$ for $\left.\frac{\partial}{\partial \phi}\right|_{\phi=\phi_0}$.
\begin{lemma}\label{lem:B_integral}
 For every even smooth function $f$ on $S^{n-1}$, $u\in S^{m-1}$, and $N\in S^{n-m-1}$ we have 
 \[
  \int_{0}^{\pi/2} \int_{S^{n-m-1}}  \cos^{m-1}\phi'  \sin^{n-m} \phi' \langle N,N'\rangle  f (\cos \phi u +\sin \phi N')
  \; d N' \; d\phi'\]
  \[  = \frac 12  H^{-1}(  u'\mapsto \partial_\phi|_{0} \mathcal Cf(u',\phi, N) )(u),\]
  where here and in the  following $(u,\phi, N)$ denotes the point $\cos\phi u + \sin\phi N\in S^{n-1}$. 
\end{lemma}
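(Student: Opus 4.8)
The statement concerns the integral appearing in Lemma~\ref{lem:B_integral}, namely an expression of the form
\[
I(u) = \int_0^{\pi/2}\int_{S^{n-m-1}} \cos^{m-1}\phi'\,\sin^{n-m}\phi'\,\langle N,N'\rangle\, f(\cos\phi' u + \sin\phi' N')\;dN'\,d\phi',
\]
and the claim is that $I = \tfrac12 H^{-1}\bigl(u'\mapsto \partial_\phi|_0 \mathcal C f(u',\phi,N)\bigr)$. The plan is to verify this by applying the hemispherical transform $H$ to both sides and checking the resulting identity, since $H$ is injective (indeed invertible) on odd functions and $I$ is manifestly odd in the variable $N$ (and, one checks, $u\mapsto I(u)$ is an odd function on $S^{m-1}$ because $\langle N, N'\rangle$ together with the even function $f$ and the parametrization conspire so that replacing $u\mapsto -u$ flips the sign). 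So the whole problem reduces to computing $H(I)$ and comparing it with $\tfrac12 \bigl(u'\mapsto \partial_\phi|_0 \mathcal C f(u',\phi,N)\bigr)$.

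\textbf{Key steps.} First I would write out $H(I)(u) = \int_{u''\in S^{m-1}: \langle u,u''\rangle\ge 0} I(u'')\,du''$ and interchange the order of integration so that the $u''$-integral is innermost. Second, for fixed $\phi'$ and $N'$, I would recognize the inner integral $\int_{\langle u,u''\rangle\ge 0} f(\cos\phi' u'' + \sin\phi' N')\,du''$ as a partial hemispherical-type integral over the embedded $S^{m-1}\subset S^{n-1}$; the key geometric observation is that $\cos\phi' u'' + \sin\phi' N'$ traces out a spherical cap region, and combining this with the remaining $\int_0^{\pi/2}\int_{S^{n-m-1}} \cos^{m-1}\phi'\sin^{n-m}\phi'\,\langle N,N'\rangle\,(\cdots)\,dN'\,d\phi'$ should, via the change of variables $v = \cos\phi' u'' + \sin\phi' N'$ (with Jacobian $\cos^{m-1}\phi'\sin^{n-m-1}\phi'$, as used in Lemma~\ref{lem:a0_coeff}), reassemble into a weighted integral over $S^{n-1}$ against $f$. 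The extra factor of $\sin\phi' = \langle v, (\text{normal direction}) \rangle$-type weight should produce precisely the kernel $|\langle \cdot,\cdot\rangle|$ of the cosine transform $\mathcal C_n$, differentiated once in the $\phi$-direction — this is where the $\partial_\phi|_0 \mathcal C f$ on the right-hand side comes from, and where the factor $\tfrac12$ appears (half the sphere). Third, I would carefully track that differentiating $\mathcal C f(u',\phi,N) = \int_{S^{n-1}}|\langle \cos\phi u' + \sin\phi N, v\rangle|\,f(v)\,dv$ at $\phi=0$ brings down $\partial_\phi|_0|\langle\cos\phi u' + \sin\phi N, v\rangle| = \operatorname{sgn}\langle u',v\rangle\langle N,v\rangle$ (valid for a.e.\ $v$), which is exactly the odd kernel matching the $\langle N, N'\rangle$ factor and the half-sphere restriction after decomposing $v = \cos\phi' u'' + \sin\phi' N'$.

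\textbf{Main obstacle.} The delicate point is the bookkeeping of the change of variables between the "fiber-bundle" coordinates $(u'',\phi',N')$ on $S^{n-1}$ and the flat decomposition needed to see the cosine-transform kernel emerge, together with justifying the differentiation under the integral sign at $\phi=0$ despite the non-smoothness of $|\langle\cdot,\cdot\rangle|$ along its zero set (this is standard — the singular set has measure zero and $f$ is smooth — but should be acknowledged). A secondary subtlety is pinning down the constant: one must confirm the weight $\sin^{n-m}\phi'$ (rather than $\sin^{n-m-1}\phi'$) correctly accounts for one factor of $\sin\phi'$ being "used up" as the derivative-in-$\phi$ direction, leaving the natural measure $dv$ on $S^{n-1}$, and that the hemisphere restriction $\langle u,u''\rangle\ge 0$ transforms into the $\operatorname{sgn}\langle u',v\rangle$ appearing after differentiation, yielding exactly the factor $\tfrac12$. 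Once the change of variables is set up correctly, the identity should fall out by inspection, and invertibility of $H$ on odd functions (cited from \cite{rubin}) closes the argument.
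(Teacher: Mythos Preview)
Your proposal is correct and follows essentially the same route as the paper. Both arguments reduce to the identity $\partial_\phi|_0\,\mathcal C f(u,\phi,N)=2H(I)(u)$ obtained by writing $\mathcal Cf$ in the fiber coordinates $(u'',\phi',N')$, differentiating at $\phi=0$, and recognising the hemispherical integral; the only cosmetic difference is that the paper splits $|K|f$ as $g_\phi(\mathbbm 1_{K\geq 0}-\mathbbm 1_{K\leq 0})$ and applies the product rule (noting the boundary term vanishes because $g_0=0$ on $\{K(0)=0\}$), whereas you differentiate $|\langle w(\phi),v\rangle|$ directly to get $\operatorname{sgn}\langle u,v\rangle\langle N,v\rangle$ --- these are two ways of phrasing the same computation.
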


\begin{proof}
 The cosine transform can be written as 
 \begin{align*} &\mathcal Cf(u,\phi,N)\\&= \int_{S^{m-1}}\int_{S^{n-m-1}}\int_{0}^{\pi/2} \cos^{m-1}\phi' \sin^{n-m-1}\phi' 
 |K(u',\phi',N')| f(u',\phi',N') \; d\phi' dN' du',\end{align*}
 where 
 $$K(u',\phi',N'):= K(u,\phi,N,u',\phi',N')  :=  \cos\phi \cos\phi' \langle u,u'\rangle + 
 \sin\phi \sin\phi' \langle N,N'\rangle .$$
 Put $g_{\phi, u, N}(u',\phi',N')=\cos^{m-1}\phi' \sin^{n-m-1}\phi' 
 K(u,\phi,N,u',\phi',N') f(u',\phi',N')$. We typically have $u, N$ fixed, and omit them from the notation.
Let us  compute 
 \begin{align*} \partial_\phi|_{0}  \mathcal Cf(u,\phi,N)&= \partial_{\phi}|_{0}
 \left( \int_{(u',\phi',N'):K(\phi)\geq 0 } g_{\phi} -\int_{(u',\phi',N'):K(\phi)\leq 0 } g_{\phi}\right)\\ 
 &= 2\partial_{\phi}|_{0}
  \int_{(u',\phi',N'):K(\phi)\geq 0 } g_{\phi}.\end{align*}
  where the last  equality follows by the symmetry  $(u',\phi',N')\mapsto (-u',\phi',-N')$, and the assumption that $f$  be even.

 For any distribution $\eta$ on the sphere, $\langle \eta, g_\phi\rangle $ is by \cite{Hormander:ALPDO}*{Theorem 2.1.3} 
 a smooth  function of $\phi$ and $\partial_\phi \langle  \eta, g_\phi\rangle =\langle  \eta, \partial_\phi  g_\phi\rangle$.  The indicator function $\mathbbm 1_{K(\phi)\geq 0}$ regarded as a distribution on  $S^{n-1}$ depending on the parameter $\phi$ is  differentiable at $\phi=0$ and its derivative is  a measure supported on $\{(u',\phi',N'): K(0)=0\}$,  see Eq.~\eqref{eq:der_phi_pos} below.    By the chain rule, 
 \begin{equation}\label{eq:der_phi2}\partial_{\phi}|_{0}
 \int_{K(\phi)\geq 0 } g_\phi =   \partial_{\phi}|_{0} \langle g_\phi,\mathbbm 1_{K(\phi)\geq 0}\rangle
 =\langle \partial_{\phi}|_{0}g_\phi, \mathbbm 1_{K(0)\geq 0}\rangle + \langle g_{0}, \partial_{\phi}|_{0} \mathbbm 1_{K(\phi)\geq 0}\rangle
 \end{equation}

Since $g_0$ vanishes whenever $K(0)$ vanishes, the second summand is zero.

 We conclude that 
 \begin{align*}\partial_\phi|_0 & \mathcal   Cf(u,\phi,N) =  2\int_{K(0)\geq 0 } \partial_\phi|_0  g_\phi\\
 & = \int_{\langle u,u'\rangle\geq 0 } \int_{S^{n-m-1}}\int_{0}^{\pi/2} \cos^{m-1}\phi' \sin^{n-m}\phi' 
 \langle N,N'\rangle f(u',\phi',N') \; d\phi' dN' du'\\
 &= 2 H_{u'}\left(\int_{S^{n-m-1}}\int_{0}^{\pi/2} \cos^{m-1}\phi' \sin^{n-m}\phi' 
 \langle N,N'\rangle f(u',\phi',N') \; d\phi' dN'  \right)(u), \end{align*} where $H_{u'}$ denotes the hemispherical transform with respect to $u'\in S^{m-1}$.
 Seeing that the argument of the hemispherical transform is odd in $u'$, we may apply the inverse to conclude the proof.
\end{proof}

Putting $\tilde N=h_{\sigma(1),\tau^c}$ and recalling $u=e_0$, we conclude, using notation \eqref{eq:directional_derivative}, that 
\begin{equation}\label{eq:a1_coefficient}A^1_{\sigma\tau}(u)=\frac12 H^{-1}(u'\mapsto \frac{\partial F}{\partial h_{\sigma(1),\tau^c}} (u')) (u)\end{equation}

For the general coefficient we do not obtain an explicit expression, but only some qualitative information. We first prove a lemma in harmonic analysis on the sphere.
\begin{definition}
	We define the $\SO(m)$-equivariant transforms \[R_{D}^k, C_D^k:C^\infty(S^{m-1})\to C^{\infty}(S^{m-1})\]  by 
	\[R_D^kf(u)=\int_{u'\perp u} \left.\frac{\partial^k}{\partial t^k}\right|_{t=0} f(\sqrt{1-t^2} u'+t u)du', \qquad C_D^kf(u)=R_D^k(\langle u, \,\bullet \, \rangle f)(u) \]
\end{definition}
\begin{lemma}\label{lem:alternating_sum_invertible}
	Assume $m\geq 3$. Let $a_{k}, a_{k-2},\dots, a_\epsilon\geq 0$ with $a_\epsilon\neq 0$,  $\epsilon\equiv k\mod 2\in\{0,1\}$. Then
	\[T:=a_{k}R_D^k-a_{k-2}R_D^{k-2}+\dots +(-1)^{\lfloor k /2\rfloor} a_\epsilon R_D^\epsilon\] is a bijection on the space $C^\infty_\epsilon(S^{m-1})$ of smooth functions on $S^{m-1}$ of parity $\epsilon$, and $T^{-1}$ is continuous.
	More precisely, there is an explicit constant $K=K(m)>0$ such that for every $q\geq0$, the operator $T^{-1}:C^{q+K}_\epsilon(S^{m-1})\to C^q_\epsilon(S^{m-1})$ is continuous.
\end{lemma}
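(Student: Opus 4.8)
The statement concerns an operator $T = \sum_j (-1)^j a_{k-2j} R_D^{k-2j}$ acting on functions of fixed parity $\epsilon$ on $S^{m-1}$, and the goal is to show it is invertible with a continuous (tame) inverse. The natural strategy is to diagonalize $T$ in the basis of spherical harmonics. Since $T$ is $\SO(m)$-equivariant, Schur's lemma tells us that $T$ acts as a scalar $\lambda_\ell$ on each space $\mathcal{H}_\ell$ of degree-$\ell$ spherical harmonics. So the whole lemma reduces to: (i) computing the multipliers $\lambda_\ell$ of the building blocks $R_D^j$ — or at least enough about their asymptotics in $\ell$; (ii) showing the combined multiplier $\lambda_\ell$ of $T$ never vanishes for $\ell \equiv \epsilon \bmod 2$; and (iii) controlling the growth/decay rate of $\lambda_\ell$ as $\ell \to \infty$ to get the quantitative $C^{q+K} \to C^q$ continuity via the Sobolev embeddings \eqref{eq:CkHs} and \eqref{eq:sobolev}.

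\textbf{Step 1: the multiplier of $R_D^k$.} I would first understand $R_D^0 f(u) = \int_{u' \perp u} f(u')\,du'$ — this is (up to normalization) the classical \emph{spherical Radon transform} restricted to the equator through $u$, also called the Funk transform, whose eigenvalues on $\mathcal{H}_\ell$ are explicitly known (classical Funk--Hecke / Minkowski). Differentiating $k$ times in the direction transverse to the equator and then integrating along the equator amounts, after a short computation, to applying a differential operator along the geodesic flow before restricting; concretely, writing $f = \sum f_\ell$ and using that $f_\ell(\sqrt{1-t^2}u' + tu)$ is a polynomial in $t$ of degree $\ell$ whose coefficients are Gegenbauer-type evaluations, one extracts $\partial_t^k|_{t=0}$ as a finite combination of "lower" spherical operators. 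The upshot I expect is a clean formula: $R_D^k$ acts on $\mathcal{H}_\ell$ by a scalar $r_\ell^{(k)}$ which is a \emph{polynomial in $\ell$ of degree $k$} (with leading term $\pm \ell^k$ up to a positive constant) times the Funk eigenvalue pattern — crucially, $r_\ell^{(k)}$ vanishes when $\ell \not\equiv k \bmod 2$ and is, up to sign $(-1)^{\lfloor k/2 \rfloor}$, of definite sign for the relevant parities. This sign bookkeeping is exactly why the hypotheses put alternating signs $(-1)^j$ in front of the $a_{k-2j}$: the alternation is designed to make all terms of $T$ add up \emph{constructively} on each $\mathcal{H}_\ell$.

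\textbf{Step 2: non-vanishing and growth of the combined multiplier.} With Step 1 in hand, $T$ acts on $\mathcal{H}_\ell$ (for $\ell \equiv \epsilon \bmod 2$) by $\lambda_\ell = \sum_{j} a_{k-2j} |r_\ell^{(k-2j)}|$, a sum of nonnegative terms since the signs have been arranged to cancel. Because $a_\epsilon \neq 0$ and $r_\ell^{(\epsilon)} \neq 0$ for every $\ell \equiv \epsilon$ with $\ell \geq \epsilon$ (here I use $m \geq 3$ to ensure the Funk transform's eigenvalues on the equatorial sphere $S^{m-2}$ do not degenerate — the $m=2$ case is genuinely different), we get $\lambda_\ell \neq 0$ for \emph{all} such $\ell$; in particular $T$ is injective and has dense range on $C^\infty_\epsilon$. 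For the quantitative statement, I need a \emph{two-sided} bound $c_1(1+\ell)^{-N} \le |\lambda_\ell| \le c_2 (1+\ell)^{k}$ for constants depending on $m$ (and the fixed $a_j$), where $N$ is the exponent of polynomial decay of the dominant Funk eigenvalue — a standard fact, $|r_\ell^{(0)}| \asymp \ell^{-(m-2)/2}$ roughly — so that $T^{-1}$ multiplies degree-$\ell$ harmonics by something of size $O((1+\ell)^{N})$. Then $T^{-1}$ is continuous $H^{s+N}_\epsilon \to H^s_\epsilon$ for every $s$, and composing with \eqref{eq:CkHs} and \eqref{eq:sobolev} converts this into $C^{q+K}_\epsilon \to C^q_\epsilon$ with an explicit $K = K(m)$ (roughly $K \sim N + m$, rounded up). Surjectivity onto $C^\infty_\epsilon$ follows since $\bigcap_s H^s = C^\infty$ and $T^{-1}$ preserves each Sobolev scale.

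\textbf{Main obstacle.} The delicate point is Step 1: pinning down the multipliers $r_\ell^{(k)}$ precisely enough to (a) verify the sign pattern that makes the alternating sum constructive, and (b) extract sharp two-sided asymptotics in $\ell$. The differentiation $\partial_t^k|_{t=0}$ composed with equatorial integration does not obviously factor through standard named transforms, so I expect to either set up a recursion relating $R_D^k$ to $R_D^{k-2}$ and the Laplacian (exploiting that $f_\ell$ restricted to a great subsphere decomposes into lower harmonics by branching $\SO(m) \downarrow \SO(m-1)$), or to compute directly with Gegenbauer polynomials using Funk--Hecke. The parity statement ($r_\ell^{(k)} = 0$ unless $\ell \equiv k \bmod 2$) should fall out immediately from the $t \mapsto -t$ symmetry, which is reassuring; the harder half is the definiteness of sign and the exact power of $\ell$. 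Everything after Step 1 is a routine application of Sobolev embeddings already quoted in Section~\ref{sec:background}.
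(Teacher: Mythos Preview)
Your proposal is correct and follows essentially the same route as the paper: diagonalize $T$ on spherical harmonics by $\SO(m)$-equivariance, show the multipliers of $R_D^{k}$ and $R_D^{k+2}$ on each $\mathcal H_\ell$ have opposite signs so the alternating sum is constructive, lower-bound $|\lambda_\ell|$ by the $a_\epsilon R_D^\epsilon$ contribution, and conclude via the Sobolev embeddings \eqref{eq:CkHs}, \eqref{eq:sobolev}. For your ``main obstacle'' the paper's shortcut is to evaluate on the zonal harmonic $P_\ell^m(\langle\,\cdot\,,e_m\rangle)$ at $u=e_m$, which gives the eigenvalue of $R_D^k$ on $\mathcal H_\ell$ as $\vol(S^{m-2})\,(P_\ell^m)^{(k)}(0)$; the sign alternation and the polynomial bound $|1/\lambda_\ell|=O(\ell^{m/2-\epsilon-1})$ then follow directly from standard Gegenbauer identities in Groemer.
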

\begin{proof}

As the transform commutes with $\textrm{SO}(m)$, $R_D^k$ is given by a scalar on the space of spherical harmonics of order $n$, denoted $\mathcal H^m_n\subset C^\infty(S^{m-1})$. Let us first check that the eigenvalues of $R_D^k$, $R_D^{k+2}$ on $\mathcal H^m_n$ have opposite signs for every $k\geq 0$ of parity $\epsilon$, which would immediately imply the injectivity of $T$.
Evaluating both on a zonal harmonic given by the Ledgendre polynomial $P_n^m(\langle u', e_m\rangle)$ with $n\equiv \epsilon\mod 2$ and for $u=e_m$, we see that we  ought to check

\[(P^m_n)^{(k)}(0)(P^m_n)^{(k+2)}(0)<0 \]

It holds for $n\geq r$ that $\frac{d^r}{dt^r}P^m_{n}(t)=cP_{n-r}^{m+2r}(t)$ for some $c>0$ \cite{groemer}*{Lemma 3.3.9}. Taking $r=k,k+2$ and noting that $n'=n-k$ is even, it remains to verify that for any $m'$ and even $n'$ we have
\[P_{n'}^{m'}(0)P_{n'-2}^{m'+4}(0)<0. \]
Now $\sign P_{n'}^{m'}(0)=(-1)^{n'/2}$ according to \cite{groemer}*{Eq.~(3.3.20)}, and correspondingly $\sign P_{n'-2}^{m'+4}(0)=(-1)^{-1+n'/2}$. Thus $T$ is non-zero on every harmonic of parity $\epsilon$ and hence injective.

To prove that $T$ is surjective, let $\lambda_n$ be the eigenvalue of $T$ on $\mathcal H^m_n$, and $r_{n}$ the eigenvalue of $R_D^{ \epsilon}$ on $\mathcal H^m_n$. 
Recall from \cite{groemer}*{Eq.~3.1.15} that $\dim \mathcal H_n^m=:N(m,n)=O(n^{m-2})$ as $n\to \infty$. Letting $c_j$ denote various constants  independent of $n$, 
we have by  \cite{groemer}*{Eq.~(3.4.19)}  for  $l$ even 
$|P^d_l(0)| > c_0 l^{-\frac{d-2}{2}}$ 
and hence by \cite{groemer}*{Lemma 3.3.9} 
\[ |r_n|=c_1 |(P_n^m)^{(\epsilon)}(0)|=c_2\frac{N(m+2 \epsilon,n)}{N(m,n+ \epsilon)} |P_{n- \epsilon}^{m+2 \epsilon}(0)|\geq c_3  n^{-m/2 + \epsilon +1}\]
As the signs in the definition of $T$  are alternating, it holds that $|\lambda_n|\geq |a_\epsilon||r_n|$ and thus 
\begin{equation}\label{eq:growth} |1/ \lambda_n| \leq c_4 n^{m/2 -\epsilon -1}.\end{equation}

Take $g\in C^{\infty}_\epsilon(S^{m-1})$, and let $g=\sum_{n\equiv \epsilon}g_n$ be its decomposition into spherical harmonics.
Set $f:=\sum_{n\equiv\epsilon}\frac{1}{\lambda_n} g_n$. Then  $f\in \bigcap_{s>0} H^s(S^{m-1})=C^\infty(S^{m-1})$ by  \eqref{eq:growth} and $Tf=g$. This proves that $T$ is surjective. 
Moreover, by the Sobolev embedding theorem \eqref{eq:sobolev}, \eqref{eq:growth}, and \eqref{eq:CkHs}  we can for every $q\geq 0$ find $s,q'=q+K(m)$ such that \[\|f\|_{C^q}\leq c_5\|g\|_{H^{s}} \leq c_6\|g\|_{C^{q'}} . \] 
This proves the continuity of $T^{-1}$ and concludes  the proof.
\end{proof}

\begin{proposition}\label{prop:fiber_simiplification}
	For an even function $f\in C^\infty_+(S^{n-1})$, point $u\in S^{m-1}$, and normal vector $N\in S^{n-m-1}$, define
	\begin{align} \label{eq:Inm}
	\begin{split}&	
	 	 I_k^{n,m}(u,N;f):=\\&\int_{0}^{\pi/2} \int_{S^{n-m-1}}  \cos^{m-k}\phi'  \sin^{n-m-1+k} \phi' \langle N,N'\rangle^k  f(\cos\phi u+\sin \phi N')  
	\; d N' \; d\phi' .
	\end{split}
	\end{align}
	Then for any $q\geq 0$, \[\|I_k^{n,m}(\bullet,N;f)\|_{C^q(S^{m-1})}\leq c \sum_{j=0}^k \left\| \frac{\partial ^j }{\partial N^j}\mathcal C_n(f) \right\|_{C^L(S^{m-1})} \] for some constants $c=c(n,m,k,q)>0$, and $L=L(n,m,k,q)\in\mathbb N$.
\end{proposition}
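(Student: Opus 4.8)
The plan is to prove the estimate by strong induction on $k$, the engine being an exact identity expressing the $k$-th ``vertical derivative'' $\frac{\partial^k}{\partial N^k}\mathcal C_n(f)=\partial_\phi^k|_{0}\mathcal C_n f(\cos\phi\,\bullet+\sin\phi\,N)$ as a finite combination of the fiber integrals $I_l^{n,m}(\bullet,N;f)$, $l\le k$. The two base cases are already available: $I_0^{n,m}(\bullet,N;f)$ is independent of $N$ and equals $\mathcal C_m^{-1}(\mathcal C_nf|_{S^{m-1}})$ by (the proof of) Lemma~\ref{lem:a0_coeff}, while $I_1^{n,m}(\bullet,N;f)=\tfrac12 H^{-1}\big(\tfrac{\partial}{\partial N}\mathcal C_n f\big)$ by Lemma~\ref{lem:B_integral}; since $\mathcal C_m^{-1}$ and $H^{-1}$ are continuous in the $C^q$-scale \cites{groemer,rubin}, the claim holds for $k\le 1$. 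Throughout I stay in the range $2\le k\le m$ (with $m\ge3$ as in Lemma~\ref{lem:alternating_sum_invertible}), in which $\cos^{m-k}\phi'$ is integrable and $I_k^{n,m}$ is genuinely defined.

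For the inductive step one computes $\partial_\phi^k|_0\mathcal C_n f(\cos\phi u+\sin\phi N)$ just as in the proof of Lemma~\ref{lem:B_integral}, in the fibered parametrization $w=\cos\phi'u'+\sin\phi'N'$ ($u'\in S^{m-1},N'\in S^{n-m-1},\phi'\in[0,\pi/2]$), where $\mathcal C_n f(\cos\phi u+\sin\phi N)=\int\cos^{m-1}\phi'\sin^{n-m-1}\phi'\,|K(\phi)|\,f$ with $K(\phi)=c_1\cos\phi+c_2\sin\phi$, $c_1=\cos\phi'\langle u,u'\rangle$, $c_2=\sin\phi'\langle N,N'\rangle$. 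Treating $\sgn K(\phi)$ as a $\phi$-family of distributions on the fiber (the argument of Lemma~\ref{lem:B_integral} via \cite{Hormander:ALPDO}*{Theorem 2.1.3}) and repeatedly using $x\,\delta^{(r)}(x)=-r\,\delta^{(r-1)}(x)$, one gets the universal expansion $\partial_\phi^k|_0|K(\phi)|=(\text{function part})+\sum_{0\le j\le k-2,\ j\equiv k\,(2)}\beta_{k,j}\,c_2^{\,j+2}\,\delta^{(j)}(c_1)$, with $\beta_{k,k-2}=2$ and function part $(-1)^{k/2}c_1\sgn c_1$ ($k$ even) or $(-1)^{(k-1)/2}c_2\sgn c_1$ ($k$ odd). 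Integrating, the function part yields $(-1)^{k/2}\mathcal C_n f$ for $k$ even (using $\mathcal C_m(I_0^{n,m}(\bullet,N;f))=\mathcal C_nf|_{S^{m-1}}$ as in Lemma~\ref{lem:a0_coeff}) and $(-1)^{(k-1)/2}\tfrac{\partial}{\partial N}\mathcal C_n f$ for $k$ odd (using Lemma~\ref{lem:B_integral}). For the $\delta^{(j)}(c_1)$ term one rescales $\delta^{(j)}(\cos\phi'\langle u,u'\rangle)=(\cos\phi')^{-(j+1)}\delta^{(j)}(\langle u,u'\rangle)$, evaluates the pairing on $S^{m-1}$ using the coarea/slice formula
\[\big\langle\delta^{(j)}(\langle u,\cdot\rangle),g\big\rangle=(-1)^j\,\partial_s^j\big|_{0}\Big[\rho(s)\!\int_{S^{m-2}}\! g\big(su+\sqrt{1-s^2}\,\bar u'\big)\,d\bar u'\Big]=(-1)^j\!\!\sum_{0\le i\le j,\ i\equiv j\,(2)}\!\!\binom{j}{i}\rho^{(j-i)}(0)\,R_D^i g(u),\]
with $\rho(s)=(1-s^2)^{(m-3)/2}$, and finally uses the crucial cancellation of $\cos\phi'$-powers: the factor $(\cos\phi')^{-(j+1)}$ together with the powers of $\cos\phi'$ produced by the chain rule (moving $u'$ toward $u$ moves $w$ with velocity $(\cos\phi')u$) combine so that $R_D^i$ is applied precisely to $I_{j+2}^{n,m}(\bullet,N;f)$. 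This produces
\[\frac{\partial^k}{\partial N^k}\mathcal C_n(f)=\pm\frac{\partial^{\epsilon}}{\partial N^{\epsilon}}\mathcal C_n(f)+2\,T_{k-2}\big(I_k^{n,m}(\bullet,N;f)\big)+\!\!\sum_{2\le l\le k-2,\ l\equiv k\,(2)}\!\!\beta_{k,l-2}\,T_{l-2}\big(I_l^{n,m}(\bullet,N;f)\big),\]
where $\epsilon=k\bmod 2$ and $T_j:=(-1)^j\sum_{i\le j,\ i\equiv j\,(2)}\binom{j}{i}\rho^{(j-i)}(0)R_D^i$ has leading term $(-1)^j R_D^j$.

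It remains to solve this identity for $I_k^{n,m}(\bullet,N;f)$. Written in the basis $\{R_D^{(k-2)-2\ell}\}$, the coefficients of $T_{k-2}$ are $(-1)^{\ell}\big[(-1)^k\binom{k-2}{2\ell}(2\ell)!\binom{(m-3)/2}{\ell}\big]$; since $\binom{(m-3)/2}{\ell}>0$ for all $\ell$ occurring when $k\le m$, the operator $\pm T_{k-2}$ is exactly of the alternating-sign form of Lemma~\ref{lem:alternating_sum_invertible} with nonzero lowest coefficient, so $T_{k-2}$ is a bijection of $C^\infty_\epsilon(S^{m-1})$ with continuous inverse $T_{k-2}^{-1}\colon C^{q+K(m)}_\epsilon\to C^q_\epsilon$ — applicable since $I_l^{n,m}(\bullet,N;f)$ has parity $(-1)^l=\epsilon$ in $u$. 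Applying $T_{k-2}^{-1}$, using that each $R_D^i$ (hence each $\beta_{k,l-2}T_{l-2}$) is bounded $C^{q'+i}\to C^{q'}$, the trivial estimate for $\tfrac{\partial^\epsilon}{\partial N^\epsilon}\mathcal C_n(f)$, and the induction hypothesis $\|I_l^{n,m}(\bullet,N;f)\|_{C^{L_l}}\le c\sum_{j\le l}\|\tfrac{\partial^j}{\partial N^j}\mathcal C_n(f)\|_{C^{L'_l}}$ for the finitely many $l<k$ present, everything telescopes to $\|I_k^{n,m}(\bullet,N;f)\|_{C^q}\le c\sum_{j=0}^k\|\tfrac{\partial^j}{\partial N^j}\mathcal C_n(f)\|_{C^L}$ for a suitable $L=L(n,m,k,q)$.

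The main obstacle is the exact computation in the second step: rigorously justifying the iterated distributional $\phi$-differentiation of $|K(\phi)|$ on the fiber, and the slice formula for $\langle\delta^{(j)}(\langle u,\cdot\rangle),\cdot\rangle$ — in which the volume factor $(1-s^2)^{(m-3)/2}$ must be expanded and supplies exactly the lower-order $R_D^i$ corrections that put $T_{k-2}$ in the scope of Lemma~\ref{lem:alternating_sum_invertible} — and, above all, the bookkeeping of $\cos\phi'$-powers that makes $R_D^i$ act on $I_{j+2}^{n,m}$ rather than on a mismatched integral. Once the displayed identity is in hand, the inversion and the telescoping of continuity estimates are routine.
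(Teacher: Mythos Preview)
Your outline is correct and lands on the same inductive architecture as the paper: base cases $k\le 1$ via Lemmas~\ref{lem:a0_coeff} and~\ref{lem:B_integral}, an identity expressing $\partial_N^k\mathcal C_n f$ as (top-order operator applied to $I_k^{n,m}$) plus lower $I_l^{n,m}$, and an appeal to Lemma~\ref{lem:alternating_sum_invertible} to invert the top-order piece. The execution, however, is genuinely different from the paper's and worth contrasting.

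The paper differentiates $\mathcal C_n f=2\langle g_\phi,\mathbbm 1_{K(\phi)\ge 0}\rangle$ by a shifted Leibniz rule (using $g_\phi|_{K=0}=0$ and $\partial_\phi^2 g_\phi=-g_\phi$), and then computes $\langle h,\partial_\phi^{l}|_0\mathbbm 1_{K\ge 0}\rangle$ for $h\in\{g_0,g_0'\}$ via an explicit recursion on auxiliary quantities $J^\nu_{r,s,t}$ (Eq.~\eqref{eq:long_recursion}). The identification with $R_D^r I_k^{n,m}$ goes through Eq.~\eqref{eq:particular_h}, and the alternation of signs of the leading coefficients is obtained by a combinatorial path-counting argument in the recursion graph. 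You instead expand $\partial_\phi^k|_0|K(\phi)|$ distributionally as a function part plus $\sum_j \beta_{k,j}\,c_2^{j+2}\,\delta^{(j)}(c_1)$, and evaluate the $\delta^{(j)}(c_1)$-terms by the coarea/slice formula on $S^{m-1}$; the operator $T_{k-2}$ then emerges directly from the Taylor coefficients $\rho^{(2\ell)}(0)=(-1)^\ell(2\ell)!\binom{(m-3)/2}{\ell}$ of the density $\rho(s)=(1-s^2)^{(m-3)/2}$, which makes the alternating-sign verification explicit (and indeed $\binom{(m-3)/2}{\ell}>0$ in the range $\ell\le\lfloor(k-2)/2\rfloor$, $k\le m$). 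What you gain is a conceptually cleaner source for the signs and an exact closed form for the coefficients; what the paper gains is that every step is an ordinary differentiation of an honest integral, so no distributional subtleties beyond the first $\phi$-derivative (already handled in Lemma~\ref{lem:B_integral}) are needed.

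The one place your sketch is thin is precisely the point you flag: the iterated differentiation of $|K(\phi)|$ in $\phi$ viewed as a family of distributions on the fiber, together with the validity of the slice identity for $\langle\delta^{(j)}(\langle u,\cdot\rangle),g\rangle$ when $g$ is only continuous. Both can be made rigorous (e.g.\ via $K(\phi)=r\cos(\phi-\phi_0)$ and the known distributional derivatives of $|\cos\psi|$, or by mollifying and passing to the limit), but they deserve a line or two of justification. Your remark about ``$\cos\phi'$-powers produced by the chain rule'' is a red herring: after the rescaling $\delta^{(j)}(\cos\phi'\langle u,u'\rangle)=(\cos\phi')^{-(j+1)}\delta^{(j)}(\langle u,u'\rangle)$, the remaining $u'$-integral is exactly $R_D^i$ applied to $I_{j+2}^{n,m}(\bullet,N;f)$ as a function on $S^{m-1}$, with no further chain-rule factors.
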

\begin{proof}

	As in the proof of Lemma~\ref{lem:B_integral} we write  $$ K(u,\phi,N,u',\phi',N')  =  \cos\phi \cos\phi' \langle u,u'\rangle + 
	\sin\phi \sin\phi' \langle N,N'\rangle, $$ \[g_\phi(u',\phi',N')=\cos^{m-1}\phi' \sin^{n-m-1}\phi' 
	K(u,\phi,N,u',\phi',N') f(u',\phi',N').\]
	We note that $\partial^2_\phi g_\phi=-g_\phi$, and record
	\[g_0=\cos^m\phi'\sin^{n-m-1}\phi' \langle u,u'\rangle f(u',\phi',N')\]
	\[g_0':=\partial_\phi|_{0}g_\phi=\cos^{m-1}\phi'\sin^{n-m}\phi'\langle N,N'\rangle f(u',\phi',N')\]

As in the proof of Lemma~\ref{lem:B_integral} we view  $\mathbbm 1_{K(\phi)\geq 0}$ as a distribution on  $S^{n-1}$ depending on the parameter $\phi$. The first and  main technical step in the proof is to compute $\partial ^l_\phi \mathbbm 1_{K(\phi)\geq0}$ for any $l>0$.	Note that the  condition $K(\phi)\geq 0$ is equivalent to 
$$\langle u,u'\rangle \geq  -\tan\phi \tan \phi' \langle N, N' \rangle=:a(\phi)=a_{\phi', N'}(\phi).$$
Putting $S^{m-2}_u=u^\perp\cap S^{m-1}$, we have for any function $h\in C^\infty(S^{m-1})$
$$\int_{\langle u,u'\rangle \geq a(\phi)} h(u')\; du'  = \int_{ a(\phi)}^1 \int_{S^{m-2}_u}   h(\sqrt{1-t^2} v ' + t u) \sqrt{1-t^2}^{m-3}\; dv' \; dt.$$
Since for any $\tilde h\in C^\infty(\RR)$
$$ \partial_\phi \int_{a(\phi)}^1 \tilde h(t)\; dt = -\tilde h(a(\phi)) a'(\phi),$$
we conclude that for any smooth function $h(u',\phi',N')$ on $S^{n-1}$ parametrized by $S^{m-1}\times[0,\frac \pi 2]\times S^{n-m-1}$,
\begin{align}\label{eq:der_phi_pos}&  \nonumber\partial_\phi  \int_{u',\phi',N'\colon K(\phi)\geq 0} h(u',\phi',N')  \\&= 
\frac{ 1 }{ \cos^2 \phi} \int_{u',\phi',N'\colon a_\phi(\phi',N')=\langle u,u'\rangle} \tan\phi' \langle N, N'\rangle h(u',\phi',N')
\\&=\frac{ 1 }{ \cos^2 \phi} \int_{v'\in S_u^{m-2},\phi',N'} \tan\phi' \langle N, N'\rangle h(\cos \theta v'+\sin\theta u,\phi',N')\sqrt{1-a(\phi)^2}^{m-3},\nonumber
\end{align}
where we use the auxiliary function $\theta=\theta(\phi',N')$ given by $\sin\theta=a_{\phi',N'}(\phi)$.

Introduce \begin{align*}J^{\nu}_{r,s, t}(u,\phi,N;h):=\tan^t\phi\int_{v',\phi',N'}  \sqrt{1-a_{\phi',N'}(\phi)^2}^s & \langle N,N'\rangle^\nu\tan^\nu\phi'\cdot \\ & \cdot\partial_ z^r|_{a(\phi)}h(\sqrt{1-z^2} v'+z u,\phi',N')dv'dN'd\phi'.\end{align*}
Putting $\phi=0$, we find
\[ J^{\nu}_{r,s, t}|_{\phi=0}=\left\{\begin{array}{cc}J^{\nu}_{r}(u,N;h),& t=0\\0,& t>0\end{array}\right. \]
where \begin{align*}J^{\nu}_{r}(u,N; h)&:=\int_{ u'\perp u,\phi',N'}\langle N, N'\rangle ^\nu\tan^\nu \phi'\;\partial^r_z|_0h(\sqrt{1-z^2} u'+z u) \;du'd\phi'dN' .\end{align*}
It follows  directly from the definitions  that
\begin{align}\label{eq:particular_h}
\begin{split}
J^k_r(u,N;g_0)&=C_D^r[I_k^{n,m}(u',N;f)];\\  
 J^{k-1}_r(u,N; g_0')&=R_D^r[I_k^{n,m}(u',N;f)].
 \end{split}
\end{align}

By eq.~\eqref{eq:der_phi_pos},  we have 
\begin{equation}\label{eq:start_step}\langle \partial_\phi \mathbbm 1_{K(\phi)\geq 0}, h\rangle=J^{1}_{0, m-3, 0}+J^{1}_{0, m-3, 2} \end{equation}
and it is easy to verify that
\begin{equation}\label{eq:long_recursion}\partial_\phi J^{\nu}_{r,s,t}=t(J_{r,s,t-1}^{\nu} + J_{r,s,t+1}^{\nu})-s(J_{r,s-2,t+1}^{\nu+2}+J_{r,s-2,t+3}^{\nu+2})-(J_{r+1,s,t}^{\nu+1}+J_{r+1,s,t+2}^{\nu+1}) \end{equation}
The value of $\langle \partial_\phi^l|_{0}  \mathbbm 1_{K(\phi)\geq 0}, h\rangle$ will thus be a linear combination of various $J^\nu_r(u,N;h)$.
We think of the end formula as the result of $(l-1)$ differentiations applied to each of the two summands in \eqref{eq:start_step}, the sum at each step consisting of summands $J^\nu_{r,s,t}$, splitting into six summands according to \eqref{eq:long_recursion}. In the last step, all summands with $t>0$ vanish. Each sequence of splittings is referred to as a path. The combinatorics is illustrated in the following diagram.

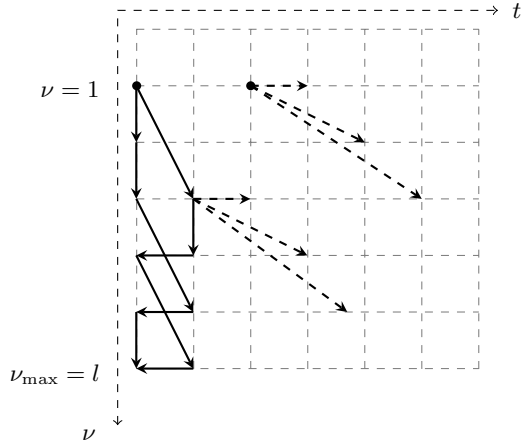
\begin{figure}[h] 
 \centering

\begin{tikzpicture}[scale=2.5]

\draw[step=.3cm, gray, thin, dashed]
(-0.902, -0.902) grid (0.9,0.9);
\draw[black, thin, dashed, <->] (-1,-1.2) -- (-1, 1) --(1,1);

\node[] at (1.1,1) {\footnotesize $t$};
\node[] at (-1.15,-1.25) {\footnotesize $\nu$};

\node[] at (-1.33, -0.93) {\footnotesize $\nu_{\max}=l$};

\node at (-0.90, 0.6) [circle,fill,inner sep=1.2pt]{};
\node[] at (-1.25, 0.58) {\footnotesize $\nu=1$};

\node at (-0.30, 0.6) [circle,fill,inner sep=1.2pt]{};

\draw[black, thick, dashed, ->, >=stealth] (-0.30, 0.6)-- (0,0.6); 
\draw[black, thick, dashed, ->, >=stealth]   (-0.30, 0.6)-- (0.6,0);  
\draw[black, thick, dashed, ->, >=stealth]  (-0.30, 0.6) --  (0.3,0.3); 

\draw[black, thick, dashed, ->, >=stealth] (-0.602,0)-- (-0.302,0); 
\draw[black, thick, dashed, ->, >=stealth]   (-0.602,0)-- (0.208,-0.6);  
\draw[black, thick, dashed, ->, >=stealth]  (-0.602,0) --  (-0.002,-0.3); 
 
\draw[black, thick, ->, >=stealth] (-0.902, 0.6) -- (-0.602,0); 
\draw[black, thick, ->, >=stealth] (-0.602, 0) -- (-0.602,-0.3); 
\draw[black, thick, ->, >=stealth]  (-0.602,-0.3) -- (-0.902, -0.3); 
\draw[black, thick, ->, >=stealth]  (-0.902, -0.3) -- (-0.602,-0.9); 
\draw[black, thick, ->, >=stealth]   (-0.602,-0.9) -- (-0.902,-0.9); 

\draw[black, thick, ->, >=stealth] (-0.902, 0.6) -- (-0.902,0.3); 
\draw[black, thick, ->, >=stealth] (-0.902, 0.3) -- (-0.902,0); 
\draw[black, thick, ->, >=stealth]  (-0.902,0) -- (-0.602, -0.6); 
\draw[black, thick, ->, >=stealth]  (-0.602, -0.6) -- (-0.902,-0.6); 
\draw[black, thick, ->, >=stealth]   (-0.902,-0.6) -- (-0.902,-0.9);

\end{tikzpicture}
\caption{The two dots represent the two summands in \eqref{eq:start_step}.  Each arrow
represents a splitting. The two solid black paths only ever  pass through the first summand in each bracket in \eqref{eq:long_recursion}. The dashed arrows  represent some splittings given by the second summands in each  bracket. }

\end{figure}

It follows immediately from \eqref{eq:long_recursion} that any  $J^{\nu}_r$ appearing in the final linear combination will have $r< \nu$ and $r\equiv \nu +1 \mod 2$. It is also easy to see that the maximal $\nu$ appearing is $\nu_{\max}=l$. We thus write
\begin{equation}\label{eq:bottom_line} \langle \partial_\phi^l|_{0}  \mathbbm 1_{K(\phi)\geq 0}, h\rangle= \sum_{r'=1}^{ \lfloor (l+1)/2 \rfloor} a_{l+1-2r'} J^l_{l+1-2r'}(u,N; h) +  \sum_{ r<\nu<l} a_{\nu,r}J^{\nu}_r(u,N; h) \end{equation}
This completes the first and main technical step in the proof.

We will now prove the proposition by induction on $k$. The statement for $k=0,1$ follows from Lemmas \ref{lem:a0_coeff} and \ref{lem:B_integral}. Suppose that it has been proved for $k'<k$. 
In the following, $h$ is either  $g_0$ or $g_0'$, and $l=k-1$.
By the induction assumption, it holds for all $k'<k$ and all $q'$ that
\[ \|I_{k'}^{n,m}(\bullet,N;f)\|_{C^{q'}(S^{m-1})}\leq c' \sum_{j=0}^{k'} \left\| \frac{\partial ^j }{\partial N^j}Cf \right\|_{C^{L'}(S^{m-1})} \] 
for $c'=c(n,m,k',q')$ and $L'=L(n,m,k',q')$. It follows by \eqref{eq:particular_h} and from the continuity of $R_D^{r}$, $C_D^r$ in the appropriate topologies, that $J^\nu_r(u, N; h)$ for $h=g_0$ and $\nu\leq k-1$, as well as for $h=g_0'$ and  $\nu\leq k-2$, satisfy
\begin{equation}\label{eq:interm_J_bound} \|J^\nu_r(\bullet, N; h)\|_{C^{q'}(S^{m-1})}\leq \tilde c \sum_{j=0}^{k'} \left\| \frac{\partial ^j }{\partial N^j}\mathcal Cf \right\|_{C^{\tilde L}(S^{m-1})} \end{equation}
for certain $\tilde c=\tilde c(n,m,k',q')$ and $\tilde L=\tilde L(n,m,k',q')$. This applies to all summands in the second sum of \eqref{eq:bottom_line}  as we assume $l=k-1$.

To understand the first sum in \eqref{eq:bottom_line}, we can ignore the second summand in each bracket in \eqref{eq:long_recursion}, as well as the second summand in \eqref{eq:start_step}, since any endpoint $J^\nu_r$ of a path of length $l-1$ passing through either of the second summands, that ends with $t=0$, must have $\nu\leq l-2$, which lands it in the second sum of \eqref{eq:bottom_line}.

We thus consider the principal term recursion
\[\partial_\phi J^{\nu}_{r,s,t}\equiv tJ_{r,s,t-1}^{\nu} -sJ_{r,s-2,t+1}^{\nu+2}-J_{r+1,s-1,t}^{\nu+1} \]
Differentiate the first summand in eq.~\eqref{eq:start_step} $l-1$ times using this recursion.  We look at paths that pass $\alpha_j$ times in the $j$th summand of \eqref{eq:long_recursion}, $j=1,2,3$, with $\alpha_1+\alpha_2+\alpha_3=l-1$. We start with $t=0$, and must have $t=0$ in the last step, so that $\alpha_1=\alpha_2$, in particular $\alpha_3\equiv l-1\mod 2$.
The path contributes towards the coefficient $a_r$ of $J^{l}_r$ with $r=\alpha_3$, and the sign of each contribution is $(-1)^{\alpha_2+\alpha_3}= (-1)^{\frac{k-2-r}{2}+r}=(-1)^{\frac{k-2+r}{2}}$. Since  for any $r\leq k-2$ of parity $k \mod 2$  there are non-zero contributions, $\sign a_r=(-1)^{\frac{k-2+r}{2}}$.

Now recall from the proof of Lemma~\ref{lem:B_integral} that by symmetry, \[\mathcal Cf(u,\phi,N)=\langle g_\phi, \mathbbm 1_{K(\phi)\geq 0}\rangle-\langle g_\phi, \mathbbm 1_{K(\phi)\leq 0}\rangle= 2 \langle g_\phi, \mathbbm 1_{K(\phi)\geq 0}\rangle.\] 
Note that $\partial_\phi ^i\mathbbm 1_{K(\phi)\geq 0}$ is by \eqref{eq:der_phi_pos} supported on $\{K(\phi)=0\}$, while $g_\phi$ vanishes on $\{K(\phi)=0\}$. Note also that $\partial_\phi \mathbbm 1_{K(\phi)\geq 0}$ is a Borel measure.
Hence
\[ \frac12\partial_\phi \mathcal  Cf(u,\phi,N)= \langle \partial_\phi g_\phi, \mathbbm 1_{K(\phi)\geq 0}\rangle \]
 and 

\begin{align}\label{eq:binomial_two_types}\frac12\partial_\phi^p\mathcal Cf(u,\phi,N)&=\sum_{i=1}^{p}\binom{p-1}{ i-1}\langle \partial_\phi^{i}|_{0} g_\phi, \partial_\phi ^{p-i}|_{0}\mathbbm 1_{K(\phi)\geq 0}\rangle \nonumber
\\&=\sum_{i=1}^{ \lfloor p/2\rfloor}\binom{p-1}{2i-1}(-1)^i\langle g_0,\partial ^{p-2i}_\phi|_{0}\mathbbm1_{K(\phi)\geq 0}\rangle \\&+ \sum_{i=0}^{ \lfloor(p-1)/2\rfloor}\binom{p-1}{ 2i}(-1)^i\langle g_0',\partial ^{p-2i-1}_\phi|_{0}\mathbbm1_{K(\phi)\geq 0}\rangle\nonumber \end{align}
Rewrite eq.~\eqref{eq:binomial_two_types} with $p=k$ as 
 \begin{align*}\label{eq:binomial_two_types}\langle g_0',\partial ^{k-1}_\phi|_{0}\mathbbm1_{K(\phi)\geq 0}\rangle =\frac12\partial_\phi^ k \mathcal Cf(u,\phi,N) &-\sum_{i=1}^{k/2}\binom{k-1}{ 2i-1}(-1)^i\langle g_0,\partial ^{k-2i}_\phi|_{0}\mathbbm1_{K(\phi)\geq 0}\rangle \\&- \sum_{i=1}^{(k-1)/2}\binom{k-1}{ 2i}(-1)^i\langle g_0',\partial ^{k-2i-1}_\phi|_{0}\mathbbm1_{K(\phi)\geq 0}\rangle\nonumber \end{align*}
By equations \eqref{eq:bottom_line} and \eqref{eq:interm_J_bound}, we see that for some $c_1=c_1(n,m,k,q')$,
\begin{equation}\label{eq:top_order_bound}\|\langle g_0',\partial ^{k-1}_\phi|_{0}\mathbbm1_{K(\phi)\geq 0}\rangle\|_{C^{q'}(S^{m-1})}\leq c_1 \sum_{j=0}^k \left\| \frac{\partial ^j }{\partial N^j}\mathcal Cf \right\|_{C^{\tilde L}(S^{m-1})}. \end{equation}

Now  set $H(u):=I_k^{n,m}(u,N;f)$. Putting $\epsilon=k\mod 2$, we note that $H\in C^\infty(S^{m-1})$ has parity $\epsilon$. Define \[ T= \sum_{r'=1}^{k/2} a_{k-2r'} R_D^{k-2r'}:C^\infty(S^{m-1})\to C^\infty(S^{m-1}),  \] 
with the coefficients from the  first sum of \eqref{eq:bottom_line}.
We have by eq.~\eqref{eq:particular_h} \[ T(H)=\sum_{r'=1}^{k/2} a_{k-2r'} J^{k-1}_{k-2r'}(u, N;  g_0')\] 
and so by eq.~\eqref{eq:bottom_line} with $l=k-1$, together with eqs. \eqref{eq:top_order_bound}  and \eqref{eq:interm_J_bound} we have 
\[  \|T(H)\|_{C^{q'}(S^{m-1})}\leq c_2\sum_{j=0}^{k} \left\| \frac{\partial ^j }{\partial N^j}\mathcal Cf \right\|_{C^{\tilde L}(S^{m-1})}   \]
Since the coefficients $a_{k-2r'}$ have alternating signs, by Lemma \ref{lem:alternating_sum_invertible} we know that $T^{-1}:C_\epsilon^{q'}(S^{m-1})\to  C_\epsilon^q(S^{m-1})$ is continuous for $q'=q+K(m)$. Taking $L(n,m,k,q)=\tilde L(n,m, k-1,q')$ and $c(n,m,k,q)=c_2$ concludes the proof.
\end{proof}

 Observe that we can view the manifold of ordered orthonormal bases in the tangent spaces to $M$ also as the total space of a principal bundle over the sphere bundle, with the  bundle projection  $\pi_0\colon FM\to SM$, $\pi_0(u_0,\ldots, u_{n-1})=u_0$. After a choice of local section $b\colon U\to FM$, the coefficients 
		$A_{\sigma\tau}^k= A_{\sigma\tau}^k(\bullet;b)$ are well-defined functions on $U\subset SM$.

\begin{corollary}\label{cor:coefficients_determined}
	The coefficients $A^k_{\sigma\tau}$ computed in subsection \ref{subsec:coefficients_integral_expressions} are determined at $p\in M$ by the restriction to $S_pM$ of all derivatives $\frac{\partial^iF}{\partial N^i}$, $i\leq k$, in all directions $N\in \Image(h_p)$. Moreover, there are constants $L=L(n,m,k)$ and $c=c(n,m,k)$ such that  for  any choice of local section  $b\colon U\to FM$
	
 	\begin{align*} \|A^k_{\sigma\tau}(\bullet;b)& \|_{C^0(S_pM\cap U)} \\
& \leq c   \sup_{v_1,\dots,v_{2k}\in S_pM}\sum_{j=0}^k \|h_p\|_\infty^{k-j}\left \|\frac{\partial^j F}{\partial (h_p(v_1,v_2)+\dots+h_p(v_{2k-1}, v_{2k}))^j}\right\|_{C^L(S_pM)}  
        	\end{align*}

\end{corollary}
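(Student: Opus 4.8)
The plan is to deduce Corollary~\ref{cor:coefficients_determined} as a reformulation and globalization of Proposition~\ref{prop:fiber_simiplification}, together with the explicit integral formula \eqref{eq:general_coefficient} for $A^k_{\sigma\tau}$. First I would recall that \eqref{eq:general_coefficient} expresses $A^k_{\sigma\tau}(u)$ at the point $(p,u)\in S_pM$ as $\det h^N_{\sigma\tau^c}(p)$ integrated against a fiber integral of $\mathcal C^{-1}F$ over $S(T_pM^\perp)$ with the weight $\cos^{m-k}\phi\sin^{n-m-1+k}\phi$. After choosing an orthonormal frame $b$ so that $T_pM^\perp$ is identified with $\RR^{n-m}$ and $S(T_pM^\perp)$ with $S^{n-m-1}$, the relevant fiber integral is, up to expanding the determinant $\det h^N_{\sigma\tau^c}$ as a polynomial in the matrix entries $h^N_{ij}=\langle h_{ij},N\rangle$, exactly a linear combination (with coefficients that are products of at most $k$ inner products $\langle h_{ij}, N\rangle$) of the integrals $I_j^{n,m}(u,N;f)$ of \eqref{eq:Inm} for $j\le k$, with $f=\mathcal C_n^{-1}F$. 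Concretely, writing $h_p$ as a vector-valued symmetric bilinear form and noting that each factor $\langle h_{ij},N\rangle$ in the expansion has the form $\langle h_p(v,w),N\rangle$ for tangent vectors $v,w$, one sees that $A^k_{\sigma\tau}$ is built from the quantities $I_j^{n,m}(\bullet,N;\mathcal C_n^{-1}F)$ with $N$ ranging over (unit multiples of) vectors in $\Image(h_p)$.

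Next, applying Proposition~\ref{prop:fiber_simiplification} with $f=\mathcal C_n^{-1}F$ — so that $\mathcal C_n(f)=F$ — gives, for each such $j\le k$ and each relevant $N$,
\[ \|I_j^{n,m}(\bullet,N;\mathcal C_n^{-1}F)\|_{C^0(S^{m-1})}\le c\sum_{i=0}^j\Bigl\|\frac{\partial^i}{\partial N^i}F\Bigr\|_{C^L(S^{m-1})}.\]
This already shows that $A^k_{\sigma\tau}$ at $p$ depends only on the restriction of the derivatives $\partial^iF/\partial N^i$, $i\le k$, to $S_pM$, for $N\in\Image(h_p)$: indeed, by Proposition~\ref{prop:fiber_simiplification} the map $f\mapsto I_j^{n,m}(\bullet,N;f)$ factors through the data $(\partial^i\mathcal C_n f/\partial N^i)_{i\le j}$, and $\mathcal C_n f=F$. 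For the qualitative determination statement, that is all that is needed, combined with the expansion of the determinant from the previous paragraph. I would spell out carefully that the directions $N$ that actually occur after expanding $\det h^N_{\sigma\tau^c}$ all lie in $\Image(h_p)$, since each $h_{ij}=h_p(e_i,e_j)$ is in the image of the second fundamental form viewed as a linear map from $\Sym^2 T_pM$.

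For the quantitative estimate, the point is to track how the scalar coefficients in the determinant expansion are controlled. Expanding $\det h^N_{\sigma\tau^c}$ over permutations, each of the $k!$ terms is $\pm\prod_{l=1}^k\langle h_p(e_{\sigma(l)},e_{\tau^c(\rho(l))}),N\rangle$ for a permutation $\rho$; using the homogeneity relation \eqref{eq:directional_derivative}, each such product of $k$ inner products times $I_j^{n,m}(\bullet,N;\cdot)$ can be absorbed into a single directional derivative with respect to the unnormalized vector $h_p(v_1,v_2)+\cdots+h_p(v_{2k-1},v_{2k})$ after a polarization/multilinearity argument, at the cost of a factor bounded by $\|h_p\|_\infty^{k-j}$ for the $j$-th order term. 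Summing over the finitely many terms and over $j\le k$, and using that $S(T_pM^\perp)$ has finite measure so the $N$-integral only contributes a constant, yields the claimed bound with $L=L(n,m,k)$ obtained from the $L$'s of Proposition~\ref{prop:fiber_simiplification} and $c=c(n,m,k)$ absorbing the combinatorial constants and the weight integrals $\int_0^{\pi/2}\cos^{m-k}\phi\sin^{n-m-1+k}\phi\,d\phi$. Finally, the statement about local sections $b\colon U\to FM$ follows since everything above is natural in the frame: the coefficients $A^k_{\sigma\tau}(\bullet;b)$ are, by the Remark after \eqref{eq:restriction_current}, pullbacks under $b$ of well-defined functions on $FM$, and the estimate is $\SO(m)$-equivariant, so it holds uniformly over $S_pM\cap U$. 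The main obstacle I expect is the bookkeeping in the polarization step — correctly matching the monomials $\prod\langle h_p(\cdot,\cdot),N\rangle$ coming from the determinant with single higher-order directional derivatives $\partial^j F/\partial(\sum h_p(v_{2l-1},v_{2l}))^j$ via multilinear expansion, while keeping the power of $\|h_p\|_\infty$ exactly $k-j$ rather than something larger; this is where one must be careful that the supremum over $v_1,\dots,v_{2k}\in S_pM$ in the statement genuinely dominates all the terms produced.
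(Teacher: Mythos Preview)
Your overall strategy matches the paper's: express the determinant appearing in \eqref{eq:general_coefficient} via polarization as a linear combination of pure powers $\langle H,N'\rangle^k$ with $H\in\Image(h_p)$, recognize each resulting integral as $I_k^{n,m}(u,H/|H|;\mathcal C_n^{-1}F)$, and then invoke Proposition~\ref{prop:fiber_simiplification} with $f=\mathcal C_n^{-1}F$ so that $\mathcal C_n f=F$. The paper carries out exactly this, using the identity $2^kk!\,x_1\cdots x_k=\sum_{\epsilon\in\{\pm1\}^k}\epsilon_1\cdots\epsilon_k(\epsilon_1x_1+\cdots+\epsilon_kx_k)^k$ applied to $x_l=\langle h_{\sigma(i_l),\tau^c(l)},N'\rangle$.

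However, your first paragraph is confused and, if taken literally, does not work. The determinant $\det h^{N'}_{\sigma\tau^c}$ is homogeneous of degree exactly $k$ in $N'$, and the $\phi$-weight in \eqref{eq:general_coefficient} is $\cos^{m-k}\phi\sin^{n-m-1+k}\phi$, which matches $I_k^{n,m}$ and no other $I_j^{n,m}$. So the fiber integral is not ``a linear combination of the $I_j^{n,m}$ for $j\le k$ with coefficients that are products of inner products $\langle h_{ij},N\rangle$'': such coefficients would depend on the integration variable $N'$, which is meaningless, and lower $j$'s do not appear at this stage. Only after polarization do you obtain a genuine (constant-coefficient) linear combination of integrals $I_k^{n,m}(u,H_l/|H_l|;\mathcal C_n^{-1}F)$; the sum over $j\le k$ in the final bound arises solely from the estimate in Proposition~\ref{prop:fiber_simiplification}, not from the determinant expansion. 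For the same reason your remarks that ``the $N$-integral only contributes a constant'' and that separate ``weight integrals $\int_0^{\pi/2}\cos^{m-k}\phi\sin^{n-m-1+k}\phi\,d\phi$'' must be absorbed are off: both the $N'$-integral and the $\phi$-integral are already inside $I_k^{n,m}$. The factor $\|h_p\|_\infty^{k-j}$ then comes out cleanly by writing $|H_l|^k\cdot|H_l|^{-j}\,\partial^j_{H_l}F$ via \eqref{eq:directional_derivative} and bounding $|H_l|\le k\|h_p\|_\infty$. Once you correct the first paragraph to reflect this, your third paragraph is the paper's argument.
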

\begin{proof}
	In light of Proposition \ref{prop:fiber_simiplification} and using eq.~\eqref{eq:general_coefficient}, it only remains to express the determinants $\det h^{N'}_{\sigma\tau^c}(p)$ for $N\in S(T_pM^\perp)$ as linear combinations \[\det h^{N'}_{\sigma\tau^c}(p)=\sum c_j \langle H_j,N'\rangle ^k\]  for some $c_j\in\RR$ and $H_j\in \Image(h_p)$.  As  
	$$2^k k! x_1\cdots x_k= \sum_{\epsilon \in \{1,-1\}} \epsilon_1\cdots \epsilon_k (\epsilon_1x_1+\cdots + \epsilon_kx_k)^k,$$
	we see that the products $\langle h_{\sigma(i_1),\tau^c(1)},N'\rangle\cdots \langle h_{\sigma(i_{k}), \tau^c(k)},N'\rangle$ arising in the determinant are given by universal linear combinations of monomials of the form 
	\begin{equation}
	\label{eq:h_linearcomb}
	\langle  \epsilon_1 h_{\sigma(i_1),\tau^c(1)}+\cdots+  \epsilon_k h_{\sigma(i_{k}), \tau^c(k)}, N'\rangle^k	
	\end{equation}
with $\epsilon_j\in\{1,-1\}$. This proves the first part. 
		 Proposition~\ref{prop:fiber_simiplification} with $q=0$ readily gives the desired bound on the norm. 
	\end{proof}

\begin{proof}[Proof of Theorem \ref{thm:jet_weyl}.]
The statement is contained in the  first part of Corollary~\ref{cor:coefficients_determined}. 
\end{proof}

\begin{remark}
	The function $F\in C^\infty(S^{n-1})$ need not be the restriction of a norm; formula \eqref{eq:general_coefficient} and Corollary \ref{cor:coefficients_determined} apply to arbitrary smooth $F$. Observe also that $A^k_{\sigma\tau}$ depends linearly on $F$.
\end{remark}

\section{The weak Weyl principle} \label{sec:main_proof}

\subsection{Geometric preliminaries}
Recall the  tautological map $\theta:SM\to S(V)$, $\theta(x,v)=v$. The second fundamental form of $M$ is $h:\Sym^2(TM)\to TM^\perp$,  $h(X,Y)= (\overline \nabla_X Y)^\perp$.

\begin{lemma}\label{lem:theta_vs_h}
	$\Image(d_{p,u}\theta)=T_u\theta(S_pM)\oplus \Image(h_p(u,\bullet))$.
	\end{lemma}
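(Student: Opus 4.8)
The plan is to unwind the definitions of $\theta$, the unit tangent bundle $SM\subset TM$, and the second fundamental form, and to identify the tangent space $T_{(p,u)}SM$ explicitly. Working in the orthonormal coordinates fixed earlier (so $u=e_0$ and $\partial_{x^i}|_p=e_i$), I would use the basis $\widetilde\partial_{x^0},\dots,\widetilde\partial_{x^{m-1}},e_1,\dots,e_{m-1}$ of $T_{(p,u)}SM$ from \eqref{eq:basis4}: the first $m$ vectors are the horizontal lifts coming from parallel transport, and the last $m-1$ are vertical, tangent to the fiber $S_pM$.

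First I would compute $d_{(p,u)}\theta$ on the vertical vectors $e_i$, $i=1,\dots,m-1$: since $\theta$ restricted to the fiber $S_pM\subset S(V)$ is just the inclusion, $d_{(p,u)}\theta(e_i)=e_i\in T_u\theta(S_pM)$, so the vertical part of the image is exactly $T_u\theta(S_pM)$. Next I would compute $d_{(p,u)}\theta$ on the horizontal lifts $\widetilde\partial_{x^i}$. By the construction recalled before \eqref{eq:basis4}, $\widetilde\partial_{x^i}=\dt P_{\gamma_i}u(t)$ where $P_{\gamma_i}u$ is the parallel transport of $u$ along a curve $\gamma_i$ with $\gamma_i'(0)=\partial_{x^i}$. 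Since $\theta(x,v)=v$, we get $d_{(p,u)}\theta(\widetilde\partial_{x^i})=\dt P_{\gamma_i}u(t)=\overline\nabla_{\partial_{x^i}}u$ viewed in $T_uS(V)$. Decomposing into tangential and normal parts and using that the tangential part is the Riemannian covariant derivative (which vanishes here since $u$ is parallel and the $e_j$, $j<m$, are tangent), exactly as in the computation $\dt\langle P_{\gamma_i}u,e_j\rangle=\langle h_{0i},e_j\rangle$ in the proof of the Proposition in Section~\ref{sec:fiber_integral}, the normal component is $h(u,\partial_{x^i})=h_{0i}$. Thus $d_{(p,u)}\theta(\widetilde\partial_{x^i})\equiv h_p(u,\partial_{x^i})$ modulo $T_u\theta(S_pM)$, and as $i$ ranges over $0,\dots,m-1$ these span $\Image(h_p(u,\bullet))$ modulo the vertical part.

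Putting the two computations together gives $\Image(d_{(p,u)}\theta)=T_u\theta(S_pM)+\Image(h_p(u,\bullet))$. To upgrade the sum to a direct sum I would note that $T_u\theta(S_pM)=u^\perp\cap T_pM$ lies inside $T_pM$, whereas $\Image(h_p(u,\bullet))\subset T_pM^\perp$ by definition of the second fundamental form; since $T_pM$ and $T_pM^\perp$ intersect trivially, the two subspaces meet only in $0$. The main (modest) obstacle is bookkeeping: being careful about which $e_j$ are tangent versus normal, and correctly identifying the tangential part of $\overline\nabla_{\partial_{x^i}}u$ as the Levi-Civita derivative of the parallel (hence covariantly constant along $\gamma_i$) section $u$, so that it genuinely vanishes and only the second-fundamental-form term survives — but this is exactly the computation already carried out in Section~\ref{sec:fiber_integral}, so it can be quoted rather than redone.
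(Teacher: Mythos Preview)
Your proof is correct and follows essentially the same approach as the paper's: both arguments amount to applying the Gauss formula $\overline\nabla_X Y=\nabla_X Y+h(X,Y)$ to a parallel section along a curve to see that the horizontal contribution to $d\theta$ is exactly $h_p(u,\bullet)$, while the vertical contribution is the tangent space to the fiber $S_pM$. The only difference is that you work in the explicit basis \eqref{eq:basis4} from Section~\ref{sec:fiber_integral} whereas the paper uses a general curve $\gamma(t)=(p(t),u(t))$; you also spell out why the sum is direct (one summand lies in $T_pM$, the other in $T_pM^\perp$), a point the paper leaves implicit.
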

\begin{proof}
	In one direction, first observe that $T_u\theta( S_pM)\subset \Image(d_{p,v}\theta)$.   If $v_0\in  T_p M$ is nonzero, $p(t)$ is a curve through $p$ with $p'(0)=v_0$ and $u(t)$ is the parallel transport of $u$ along $p(t)$, then putting $\gamma(t)=(p(t), u(t))$ we find
	\[ h_p(u,v_0)=u'(0)= d_{p,u}\theta(\gamma'(0)).   \]
	 Thus $\Image(h_p(u,\bullet))\subset \Image(d_{p,u}\theta)$. 
	In the other direction, let $\gamma(t)=(p(t), u(t))$ be arbitrary with $u(t)\in SM$  and $\gamma(0)=(p,u)$. Then $d\theta(\gamma'(0))=u'(0)=h_p(u,p'(0))+\nabla_{p'(0)}u(t)$. It remains to note that $\nabla_{p'(0)}u(t)\in T_pM$, and $\langle \nabla_{p'(0)}u(t), u\rangle=0$.
\end{proof}

\begin{corollary}\label{cor:dir_reg}
$M$ is directionally regular at $(p,u)\in SM$ if and only $h_p(u,\bullet)$ has rank $\min(m,n-m)$.
\end{corollary}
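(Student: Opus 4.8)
The plan is to extract everything from Lemma~\ref{lem:theta_vs_h} together with a dimension count. First I would observe that the restriction of $\theta$ to the fiber $S_pM$ is simply the inclusion of $S_pM$ into $S^{n-1}$, since $\theta(x,v)=v$; hence $\theta(S_pM)$ is the unit sphere of the $m$-dimensional subspace $T_pM\subset V$, an $(m-1)$-dimensional submanifold of $S^{n-1}$, so that $T_u\theta(S_pM)=u^\perp\cap T_pM$ has dimension $m-1$. Because the two summands in Lemma~\ref{lem:theta_vs_h} lie in the complementary subspaces $T_pM$ and $T_pM^\perp$, the sum is genuinely direct, and therefore
\[ \dim \Image(d_{p,u}\theta) = (m-1) + \rank h_p(u,\bullet). \]

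Next I would recall that $\dim SM = 2m-1$ and $\dim S^{n-1}=n-1$, so $d_{p,u}\theta$ automatically has rank at most $\min(2m-1,n-1)$; thus $M$ is directionally regular at $(p,u)$ precisely when $\dim\Image(d_{p,u}\theta)$ attains this maximal value. Combined with the displayed formula, directional regularity is equivalent to
\[ \rank h_p(u,\bullet) = \min(2m-1,n-1)-(m-1). \]
Finally, applying the elementary identity $\min(a,b)-c=\min(a-c,b-c)$ with $a=2m-1$, $b=n-1$, $c=m-1$, the right-hand side equals $\min(m,n-m)$, which is exactly the asserted condition. It is worth noting that $\rank h_p(u,\bullet)\le\min(m,n-m)$ always holds, since $h_p(u,\bullet)\colon T_pM\to T_pM^\perp$, so the content of the corollary is that this rank be maximal.

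There is no genuine obstacle here: the whole geometric input is packaged in Lemma~\ref{lem:theta_vs_h}, and what remains is the dimension bookkeeping plus the $\min$ identity. The only two points meriting a word of care are the directness of the sum, which follows at once from the orthogonality of $T_pM$ and $T_pM^\perp$, and the identification of $T_u\theta(S_pM)$ with the $(m-1)$-dimensional space $u^\perp\cap T_pM$.
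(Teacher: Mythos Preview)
Your proof is correct and follows the same approach as the paper, which simply says the corollary follows immediately from Lemma~\ref{lem:theta_vs_h}. You have spelled out the dimension count that the paper leaves implicit; nothing more is needed.
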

\begin{proof} Follows immediately from Lemma~\ref{lem:theta_vs_h}.
\end{proof}

 We now show that most immersions are directionally regular at most points.  Equip $C^\infty(M,V)$ with the smooth Whitney topology, see \cite{guillemin_golubitsky} for details. A \emph{residual subset} is the countable intersection of open dense subsets.
	
\begin{lemma}\label{lem:generic_perturbation}

Let $W\subset C^\infty(M,V)$ be the subset of smooth immersions of the manifold $M^m$ in the linear space $V^n$.  Then there is a  dense residual subset $W'\subset W$ such that for all $f\in W'$, $f$ is directionally regular on a dense open subset of $SM$. 
\end{lemma}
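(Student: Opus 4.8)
The plan is to apply a standard parametric transversality (Thom--Abraham) argument, using the jet transversality theorem to perturb the immersion so that a suitable jet map becomes transverse to a ``degeneracy locus'' inside a jet bundle. By Corollary~\ref{cor:dir_reg}, directional regularity at $(p,u)$ is equivalent to the second fundamental form $h_p(u,\bullet)\colon T_pM\to N_pM$ having maximal rank $\min(m,n-m)$. Since $h_p$ is (the symmetric part of) a piece of the $2$-jet of the immersion, the condition ``$h_p(u,\bullet)$ is not of maximal rank for some $u\in S_pM$'' is a closed condition on the $2$-jet $j^2f(p)$ together with a point of the (compact) fiber $S_pM$. So first I would set up the bundle: over each $p\in M$, the space of $2$-jets of immersions at $p$ determines $h_p\in \Sym^2(T_p^*M)\otimes N_pM$ (after choosing the auxiliary Euclidean splitting $T_pV = T_pM\oplus N_pM$), and I define $\Sigma\subset J^2(M,V)$ to be the set of jets for which there exists $u\in S_pM$ with $\rank h_p(u,\bullet)<\min(m,n-m)$.

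Next I would argue that $\Sigma$ is a closed subset of the jet bundle $J^2(M,V)$ which is contained in a finite union of submanifolds of positive codimension; equivalently, $\Sigma$ is a closed set that is ``thin'' (nowhere dense, contained in a countable union of submanifolds of codimension $\geq 1$). The cleanest way: consider the incidence variety $\widetilde\Sigma\subset J^2(M,V)\times_M SM$ consisting of pairs $(j^2f(p),u)$ with $\rank h_p(u,\bullet) < \min(m,n-m)$. Fixing $u$, the map from the linear space of $h$'s to $\Hom(T_pM, N_pM)$ sending $h\mapsto h(u,\bullet)$ is linear and surjective, so the non-maximal-rank locus in the $h$-space has codimension equal to $\codim$ of the non-maximal-rank matrices in $\Hom(T_pM,N_pM)$, which is $\geq 2$ when $\min(m,n-m)\geq 1$ (the generic corank-one determinantal variety in an $a\times b$ matrix space has codimension $|a-b|+1\ge 1$, and is $\ge 2$ unless $m=1$; here $m\geq 1$ and we are in the range $n\le 2m$ of Theorem~\ref{thm:special_dimension}, but the lemma as stated only needs positive codimension, which holds as soon as $m,n-m\ge 1$). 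Hence $\widetilde\Sigma$ has positive codimension in $J^2(M,V)\times_M SM$, and projecting off the compact fiber $SM$, $\Sigma = \pi(\widetilde\Sigma)$ is a closed subset of $J^2(M,V)$ of positive codimension, stratified by submanifolds.

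Then I would invoke the Thom jet transversality theorem: the set of $f\in C^\infty(M,V)$ whose $2$-jet extension $j^2f\colon M\to J^2(M,V)$ is transverse to each stratum of $\Sigma$ is residual in $C^\infty(M,V)$ in the Whitney topology, and (intersecting with the open set $W$ of immersions, which is open and whose closure/interior behaviour is standard) we get a dense residual $W'\subset W$. For $f\in W'$, since $\Sigma$ has codimension $>\dim M$ along no stratum is forced — wait, rather: transversality to a stratum of codimension $c$ means $j^2f$ meets it in a submanifold of codimension $c$ in $M$; but here I want the stronger conclusion that $f$ is directionally regular on a \emph{dense open} subset of $SM$, not just that the bad set in $M$ is small. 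So instead I would phrase the transversality upstairs, on the manifold $M\times_{\mathrm{pt}}$... more precisely work with the map $\hat f\colon SM\to J^2(M,V)\times_M SM$, $(p,u)\mapsto (j^2f(p),u)$ (which is $j^2f\circ\pi$ paired with the identity on the $SM$-factor), and require $\hat f\pitchfork \widetilde\Sigma$. By the jet transversality theorem (applied with the extra compact parameter $u\in S_pM$, a routine extension), this holds for $f$ in a residual set $W'\subset W$. Then for $f\in W'$, $\hat f^{-1}(\widetilde\Sigma)$ is a submanifold of $SM$ of codimension $= \codim\widetilde\Sigma > 0$ (or empty), hence closed with empty interior, so its complement — exactly the directionally regular locus by Corollary~\ref{cor:dir_reg} — is open and dense in $SM$. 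That completes the proof.

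The main obstacle I anticipate is the bookkeeping needed to make ``jet transversality with an auxiliary compact parameter $u\in S_pM$'' precise: one must either absorb the $SM$-fiber into the source manifold and check that the resulting perturbation statement still follows from the standard Thom theorem, or appeal to a multi-jet/parametric version, and one must verify that the degeneracy locus $\widetilde\Sigma$ is genuinely a (Whitney-stratified, or at least finite union of) submanifold(s) with the codimension claimed — this is the classical determinantal variety computation but needs the dependence on $u$ handled uniformly, e.g.\ by noting the $\mathrm{SO}$-equivariance and that $u\mapsto (h\mapsto h(u,\bullet))$ is a smooth family of surjective linear maps, so $\widetilde\Sigma$ is a smooth fiber bundle of determinantal varieties over $SM$. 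Everything else (openness of immersions, Baire category, the identification of the regular locus) is routine.
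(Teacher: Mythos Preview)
Your approach is essentially correct, and you have correctly identified the key ingredients: directional regularity is a rank condition on the $2$-jet (via Corollary~\ref{cor:dir_reg}), the bad locus is a determinantal variety of positive codimension, and openness of the regular set upgrades ``dense'' to ``open and dense''. A couple of small inaccuracies are harmless: the codimension $|2m-n|+1$ equals $1$ precisely when $n=2m$, not when $m=1$; and your claim that the projection $\Sigma=\pi(\widetilde\Sigma)$ has positive codimension in $J^2(M,V)$ is unjustified (projections can increase dimension relative to the target), but you immediately abandon that route and work upstairs with $\widetilde\Sigma$, so this does not matter.

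The paper, however, takes a different and simpler route that entirely sidesteps the parametric transversality you flag as the main obstacle. Rather than introducing the incidence variety $\widetilde\Sigma$ and the parametrized map $\hat f\colon SM\to J^2(M,V)\times_M SM$, the paper fixes a \emph{countable dense sequence} $(q_i,u_i)$ in $TM\setminus\underline 0$. For each fixed $(q,u)$, the bad set $W_{q,u}=\{j^2_qf:\rank d_{(q,u)}\tilde\theta_f<\min(n,2m)\}$ sits inside the single fiber $J^2_q(M,V)$ and therefore has codimension at least $m+1$ in the total space $J^2(M,V)$. Since $\dim M=m<m+1$, transversality of $j^2f$ to $W_{q,u}$ simply means $j^2f(M)\cap W_{q,u}=\emptyset$, i.e.\ $f$ is directionally regular at $(q,u)$. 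One then applies the \emph{standard} Thom theorem once for each $i$, intersects the resulting residual sets, and uses openness of the regular locus to finish. This buys a cleaner argument with no need for parametric jet transversality or stratification of $\widetilde\Sigma$; your approach, on the other hand, would in principle yield the slightly stronger conclusion that the non-regular set in $SM$ is a stratified subset of positive codimension, not merely nowhere dense.
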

\proof
 For basic notions of jet bundles of smooth functions, we refer to \cite{guillemin_golubitsky}. By \cite{guillemin_golubitsky}*{Lemma 5.5}, $W$ is an open subset of $C^\infty(M,V)$.

For $f\in C^\infty(M,V)$, let $\tilde \theta_f: TM\to  V$ be the composition of $df: TM\to TV$ with the projection to the second factor of $TV=V\times V$, which is just the previously encountered tautological map, extended to arbitrary smooth maps.
 
Denote $k=\min\{n,2m\}$. For $(q,u)\in TM$, $u\neq 0$, define the subset $W_{q,u}:=\{j_q^2f:  \mathrm{rank}(d_{q,u}\tilde \theta_f)<k\}$ 
of the bundle of $2$-jets $J^{2}(M,V)$. Note that $W_{q,u}$ is a finite union of  locally closed submanifolds, each of codimension at least $m+1$.  By Thom's transversality theorem \cite{guillemin_golubitsky}*{Theorem 4.9}, each set $\{f\in C^\infty(M,V): j^{2}f\transv W_{q,u}\}$ is   residual in $C^\infty(M,V)$. Note that $j^2f(M)\subset J^2(M,V)$ is an $m$-dimensional submanifold, and therefore $j^{2}f\transv W_{q,u}\iff j^{2}f(M)\cap W_{q,u}=\emptyset$.
Now choose a dense sequence $(q_i,u_i)\in  TM\setminus \underline 0$. By \cite{guillemin_golubitsky}*{Lemma 3.3}, any residual subset of $C^\infty(M,V)$ is dense, and it follows that the intersection 
$$W':=W\cap \bigcap_{i\geq 1} \{f\in C^\infty(M,V): j^{2}f(M) \cap W_{q_i,u_i}=\emptyset\}$$ is a dense residual subset of $W$. 
Finally, note that the set of directionally regular points of any given immersion is open. Thus any $f\in W'$ is directionally regular at an open subset containing all $(q_i,u_i)$, concluding the proof. 
\endproof

\subsection{When WWP fails.}
 As before, in the following $M\subset V$ is an immersed submanifold, $m=\dim M$ and $n=\dim V$.
We first prove Theorem \ref{thm:false_in_general}, which is implied by the following two statements.
	
	\begin{lemma}\label{lem:hemispherical_equator_essential}
		Consider the hemispherical transform $H:C^\infty(S^{m-1})\to C^\infty(S^{m-1})$, and let $a:S^{m-1}\to S^{m-1}$ be the antipodal map. Assume $u,v\in S^{m-1}$ and $u\perp v$. Let $U$ be any neighborhood of $u$. Then there is an odd function $f\in C^\infty_c(U\cup aU)$ such that $H^{-1}(f)(v)=1$.
	\end{lemma}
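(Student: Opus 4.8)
The plan is to use the known mapping properties of the hemispherical transform $H$ on the odd part of $C^\infty(S^{m-1})$, namely that $H$ is continuously invertible there (by \cite{rubin}), and to exploit the freedom of choosing where the \emph{support} of $H^{-1}(f)$ lies versus where we control the \emph{value} of $H^{-1}(f)$. Concretely, I would not try to prescribe $f$ and compute $H^{-1}(f)$; instead I would start from a conveniently chosen odd function $g$ supported near $v$ (and its antipode), set $f := H(g)$, and then argue that $f$ can be localized near $u \cup aU$ after a small correction. The point is that $g$ is supported away from $u$, so the only obstruction to $f=H(g)$ being supported near $u$ is the ``tail'' of $H(g)$; the strategy is to subtract off a suitable function supported near $u$ to kill that tail without disturbing the value at $v$.

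Here is the sequence of steps. \textbf{Step 1:} Fix a smooth odd bump $g \in C^\infty_c(S^{m-1})$ supported in a small neighborhood $U_v$ of $v$ together with $aU_v$, normalized so that $H^{-1}$ applied later gives value $1$ at $v$; more precisely, choose $g$ odd with $g$ localized near $\pm v$ and $\int$-type normalization arranged so that the eventual function evaluates to $1$ at $v$. \textbf{Step 2:} Consider $\psi := H(g) \in C^\infty_{\mathrm{odd}}(S^{m-1})$. By the integral formula $H(g)(w) = \int_{\langle w, \cdot\rangle \geq 0} g$, and since $g$ is supported near $\pm v$, the values $H(g)(w)$ depend only on which hemisphere centered at $w$ captures the mass of $g$; in particular $H(g)$ is \emph{locally constant} near any $w_0$ for which the great sphere $w_0^\perp$ avoids $\mathrm{supp}(g)$. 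Because $u \perp v$, the equator $u^\perp$ passes \emph{through} $v$, which is exactly the delicate configuration — but near $u$ itself the relevant great sphere is $u^\perp$, and we can choose $U$, $U_v$ small enough that the combinatorics of hemispheres is controlled. \textbf{Step 3:} Write $\psi = \psi_{\mathrm{loc}} + \psi_{\mathrm{tail}}$ where $\psi_{\mathrm{loc}}$ is $\psi$ multiplied by a cutoff supported in $U \cup aU$ and $\psi_{\mathrm{tail}}$ is the rest. Then $H^{-1}(\psi_{\mathrm{loc}}) = g - H^{-1}(\psi_{\mathrm{tail}})$, and we need $H^{-1}(\psi_{\mathrm{tail}})(v)$ to be computable/controllable. \textbf{Step 4:} Iterate or use a fixed-point/Neumann-series argument: since $\psi_{\mathrm{tail}}$ has small $H^{-1}$-norm (if $U$ is chosen appropriately, or after rescaling), one solves $H(f) = $ (something supported in $U \cup aU$) with $f$ close to $g$, hence $f(v)$ close to $1$, then rescale to get exactly $1$.

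Actually, I suspect the cleanest route reverses the roles: take $f$ to be an arbitrary odd function in $C^\infty_c(U \cup aU)$ to be determined, and observe that the map $f \mapsto H^{-1}(f)(v)$ is a continuous linear functional on the (closed) subspace of odd functions supported in $\overline{U \cup aU}$. The claim ``there exists $f$ with $H^{-1}(f)(v) = 1$'' is then equivalent to the assertion that this functional is \emph{not identically zero} — and that amounts to showing $v \notin \mathrm{supp}\big(H^{-1}(\delta_w)\big)$ fails for \emph{no} $w \in U \cup aU$, i.e.\ that the distributional kernel $H^{-1}$ does not annihilate the point $v$ against all test functions localized near $u$. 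Equivalently, one shows $H^{-T}(\delta_v)$, the transpose of $H^{-1}$ applied to the evaluation at $v$ (which is a distribution since $H^{-1}$ is continuous $C^\infty \to C^\infty$, so its transpose maps distributions to distributions and in particular $\delta_v \mapsto$ some distribution $\mu_v$), is \emph{not} supported away from $u$. So the real content is: $u \in \mathrm{supp}(\mu_v)$ whenever $u \perp v$. This is where the orthogonality hypothesis is used crucially: the singular support of $H^{-1}$ (or of the fundamental solution of the relevant degenerate elliptic operator $\Delta$-like symbol that inverts $H$ on odd functions, cf.\ Rubin's inversion formulas) is concentrated precisely on the incidence relation $\{(w,w'): w \perp w'\}$, so $\mu_v$ has $v^\perp$ in its support, and $u \in v^\perp$.

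\textbf{The main obstacle} I expect is making Step 2 / the support analysis rigorous: controlling $\mathrm{supp}\big(H^{-1}(f)\big)$ in terms of $\mathrm{supp}(f)$. The transform $H^{-1}$ is nonlocal, so a priori localizing $f$ near $u$ does nothing to localize $H^{-1}(f)$; one must instead understand the \emph{wavefront set / singular support} of the Schwartz kernel of $H^{-1}$. The right tool is Rubin's explicit inversion: $H^{-1}$ is, up to smoothing operators and constants, a constant-coefficient pseudodifferential-type operator in the spherical harmonic decomposition whose kernel's singularities live exactly on the orthogonality variety $\{w \perp w'\}$; equivalently $H$ and $H^{-1}$ intertwine with powers of the spherical Laplacian and the antipodally-odd Funk–type relations, and the relevant parametrix has kernel singular along $w \perp w'$. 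Granting that, the nonvanishing of $f \mapsto H^{-1}(f)(v)$ on functions supported near $u \perp v$ follows because the transpose distribution $\mu_v$ restricted to a neighborhood of $u$ is a nonzero distribution (it has a genuine singularity at points of $v^\perp$), hence pairs nontrivially with some test function there; a partition-of-unity and normalization argument then upgrades ``nonzero pairing'' to ``value exactly $1$,'' and the oddness is preserved throughout since $H$, $H^{-1}$, and the evaluation-at-$v$ minus evaluation-at-$(-v)$ construction all respect the antipodal parity decomposition.
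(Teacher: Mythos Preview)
Your ``cleanest route'' correctly reduces the lemma to showing that the linear functional $f\mapsto H^{-1}(f)(v)$ is not identically zero on odd test functions supported in $U\cup aU$, equivalently that the distribution $\mu_v=(H^{-1})^T\delta_v$ has $u$ in its support. But the step on which everything rests---that the Schwartz kernel of $H^{-1}$ is genuinely singular along $\{w\perp w'\}$, so that $\mu_v$ is nonzero near every point of $v^\perp$---is asserted, not proved. You invoke ``Rubin's inversion formulas'' and wavefront-set heuristics, but you do not write down an inversion formula, identify the kernel, or verify that the singularity is present rather than cancelled. Since this is exactly the content of the lemma, the proposal as written is circular: you have restated what must be shown in microlocal language without supplying the argument. (Your earlier Steps~1--4 do not help: the Neumann-series idea has no mechanism to make $\psi_{\mathrm{tail}}$ small, and indeed it is not small---$H(g)$ for $g$ a bump near $v$ is essentially a smoothed sign function, large everywhere.)

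The paper's proof avoids all of this with an elementary symmetry argument. One takes $\tilde f$ odd, supported in $U\cup aU$, and \emph{zonal} around $u$ (i.e.\ $\mathrm{Stab}(u)=\SO(m-1)$-invariant), with $d_u\tilde f\neq 0$. Then $\tilde g:=H^{-1}\tilde f$ is also zonal around $u$. If $\tilde g$ vanished near $v\in u^\perp$, zonality would force it to vanish on an entire open band around $u^\perp$; but then for $w$ near $u$ the hemispheres $\{w'\cdot w\geq 0\}$ and $\{w'\cdot u\geq 0\}$ differ only inside that band, so $H\tilde g=\tilde f$ would be constant near $u$, contradicting $d_u\tilde f\neq 0$. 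Hence $\tilde g$ does not vanish identically on $u^\perp$, and a small rotation plus rescaling gives $H^{-1}(f)(v)=1$. This argument is three lines, uses no microlocal analysis, and in effect \emph{proves} the singular-support statement you wanted by a direct contradiction. If you want to salvage your approach, the missing ingredient is exactly this: a concrete reason why $\mu_v$ cannot vanish near $u$, and the zonal-function trick is the cheapest way to supply it.
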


	\begin{proof}
		Let $\tilde f$ be a smooth, odd,  $\mathrm{Stab}(u)=\SO(m-1)$-invariant function supported in $U\cup aU$ with $d_u\tilde f\neq 0$. Set $\tilde g=H^{-1}\tilde f$, which is clearly $\SO(m-1)$-invariant. 
		Note that the hypersphere $u^\perp$ must lie entirely in the support of $\tilde g$, otherwise by $\SO(m-1)$-invariance there is an open band around $u^\perp$ where $\tilde g$ vanishes, but then $\tilde f=H\tilde g$ must be constant in a neighborhood of $u$.
		We may then find a small rotation $R\in\SO(m)$ such that $R\tilde f$ is supported in $U\cup aU$, and $H^{-1}(R\tilde f)(v)=R\tilde g(v)\neq 0$. It remains to set $f= \frac{1}{R\tilde g(v)}R\tilde f$.
	\end{proof}
	
\begin{proposition}\label{prop:thm B}
	Assume $m=\dim M\geq 3$, and  $\theta\colon SM\to S^{n-1}$ has constant rank $r\leq n-2$ near $(p,u)\in SM$. Assume moreover that $\Image(h_p)\neq\Image(h_p(u,\bullet))$. Then there is a neighborhood $U\subset M$ of $p$ such that the coefficients $A^1_{\sigma\tau}$ at $(p,u)$ are not determined by $F|_U$.
\end{proposition}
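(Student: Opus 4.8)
The plan is to exhibit, for a suitably small neighbourhood $U$ of $p$, two norms $F_1,F_2$ on $V$ that agree on $\{T_xM:x\in U\}$ but assign different values to $A^1_{\sigma\tau}$ at $(p,u)$ for some orthonormal frame at $p$ and some indices $\sigma,\tau$. Since by the remark following Corollary~\ref{cor:coefficients_determined} the coefficient $A^1_{\sigma\tau}$ depends linearly on the norm, and since ``being a smooth norm of positive curvature'' is an open condition, it suffices to produce an even $G\in C^\infty(S^{n-1})$ that vanishes on $\theta(SM|_U)=\bigcup_{x\in U}S_xM$ and has $A^1_{\sigma\tau}[G](p,u)\neq0$; then $F_1=F$ and $F_2=F+\epsilon G$ for $|\epsilon|$ small do the job.

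First I would fix the data. By the constant-rank theorem $\theta(SM)$ is near $u$ an embedded submanifold $\Sigma\subset S^{n-1}$ of dimension $r\le n-2$, and by Lemma~\ref{lem:theta_vs_h} its tangent space is $T_u\Sigma=(u^\perp\cap T_pM)\oplus\Image(h_p(u,\bullet))$; shrinking, we fix $U$ and a neighbourhood $\mathcal W$ of $u$ with $\theta(SM|_U)\cap\mathcal W=\Sigma\cap\mathcal W$ and $S_pM\cap\mathcal W\subset\Sigma$. From the polarization identity $\Image(h_p)=\Image(h_p(u,\bullet))+\linspan\{h_p(v,v):v\in u^\perp\cap T_pM\}$ together with the hypothesis $\Image(h_p)\neq\Image(h_p(u,\bullet))$ one gets a unit $w\in u^\perp\cap T_pM$ with $N_0:=h_p(w,w)\notin\Image(h_p(u,\bullet))$; as $N_0\in T_pM^\perp$ this gives $N_0\notin T_u\Sigma$. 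Take the orthonormal basis $e_0=u,\ e_1=w,\ e_2,\dots,e_{m-1}$ of $T_pM$ and indices $\sigma,\tau$ with $\sigma(1)=\tau^c(1)=1$, so that $h_{\sigma(1),\tau^c}=N_0$ and, by \eqref{eq:a1_coefficient},
$$A^1_{\sigma\tau}[G](p,u)=\tfrac12\,H^{-1}\!\bigl(u'\mapsto\tfrac{\partial G}{\partial N_0}(u')\bigr)(u),$$
the hemispherical transform being taken on $S_pM\cong S^{m-1}$ and evaluated at $u=e_0$.

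Next comes the construction of $G$. Lemma~\ref{lem:hemispherical_equator_essential} supplies odd functions with arbitrarily small support near a pair $\pm u_0$, $u_0\perp u$, lying on the equator of $u$, whose inverse hemispherical transform at $u$ is $1$; a minor variant produces such a function that, if one insists it \emph{vanish} on the equator, still has nonzero transverse derivative there and nonzero $H^{-1}$ at $u$. One then wants to realize this odd function as $g:=\frac{\partial G}{\partial N_0}\big|_{S_pM}$ for an even $G$ vanishing on $\theta(SM|_U)$. The point is that at a point $v\in S_pM$ where $\theta$ has locally constant rank near $(p,v)$, the set $\theta(SM|_U)$ is, near $v$, the submanifold $\theta(SM)$ of dimension $(m-1)+\dim\Image(h_p(v,\bullet))$ with tangent space $T_vS_pM\oplus\Image(h_p(v,\bullet))$; if moreover $N_0\notin\Image(h_p(v,\bullet))$ — which in particular forces that dimension to be $<n-1$, since otherwise $\Image(h_p(v,\bullet))=T_pM^\perp\ni N_0$ — then $N_0$ is transverse to $\theta(SM|_U)$ near $v$, and in adapted coordinates $(y,z)$ with $\theta(SM|_U)=\{z=0\}$ and $N_0$ in the $z$-directions one may take $G=\sum_i a_i(y,z)z_i$, choosing the $a_i$ along $S_pM$, to realize any prescribed $g$ supported in this ``good'' region while keeping $G$ zero on $\theta(SM|_U)$. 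Writing $G_{\mathrm{good}}\subset S_pM$ for the open set of such $v$ (it contains $u$), one chooses an odd $g$ supported in $G_{\mathrm{good}}$ near the relevant part of the equator of $u$ (vanishing on the equator itself where $N_0$ happens to be tangent to $\theta(SM|_U)$, but with the right transverse behaviour) and with $H^{-1}(g)(u)\neq0$; realizes it via the above for an even $G$ supported in a tube about that part of $S_pM$ and identically zero away from it (symmetrizing by $G(-v):=G(v)$, consistent since $\theta(SM|_U)$ and the construction are antipodally symmetric); and concludes $A^1_{\sigma\tau}[G](p,u)=\tfrac12 H^{-1}(g)(u)\neq0$.

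The delicate step — and what I expect to be the main obstacle — is this last reconciliation. The inverse hemispherical transform forces the admissible perturbations to concentrate near the equator $\{v\in S_pM:v\perp u\}$, which is ``antipodal'' to the region $(p,u)$ near which $\theta$ is assumed to have constant rank; one must therefore control $\theta(SM|_U)$ at points of $S_pM$ accumulating on that equator — verifying there that $\theta$ has locally constant rank $<n-1$ and that $N_0=h_p(w,w)$ stays transverse to $\theta(SM|_U)$, or, when $N_0$ is only tangent along the equator, that a nonvanishing transverse derivative off the equator is still available — and it is here that the hypothesis $m\geq3$ (so that $\dim(u^\perp\cap T_pM)\geq2$) and the constant-rank assumption get used. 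Everything else (the local solvability of the ``vanishing plus prescribed $N_0$-derivative'' problem, the cut-off, and the antipodal symmetrization) is routine.
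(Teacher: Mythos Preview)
Your overall strategy---linearize, produce an even $G$ vanishing on $\theta(SM|_U)$ with $A^1_{\sigma\tau}[G]\neq 0$---is exactly right, and your polarization argument singling out $e_1=w\perp u$ with $N_0=h_p(w,w)\notin\Image(h_p(u,\bullet))$ matches the paper. The gap is the one you flag yourself as the ``main obstacle'', and it is real. By taking $e_0=u$ you are forced, via \eqref{eq:a1_coefficient} and Lemma~\ref{lem:hemispherical_equator_essential}, to support the perturbation of $\partial G/\partial N_0|_{S_pM}$ near the equator $u^\perp\cap S_pM$. But the hypothesis only gives constant rank of $\theta$ near $(p,u)$ (and by symmetry $(p,-u)$); at equatorial points nothing is assumed, the rank of $\theta$ may jump to $n-1$ there, and $N_0$ need not be transverse to $\theta(SM|_U)$ (indeed at $v=w$ one has $N_0=h_p(w,w)\in\Image(h_p(w,\bullet))\subset T_w\theta(SM)$). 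Your proposed workarounds---checking constant rank near equatorial points, or arranging a ``nonvanishing transverse derivative off the equator''---are not consequences of the hypotheses and cannot be expected to hold.

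The paper resolves this with a single change of viewpoint: it does \emph{not} evaluate the coefficient at $(p,u)$, but chooses $e_0\in S_pM$ with $e_0\perp u$ and $e_0\perp e_1$ (here $m\geq 3$ is used) and evaluates $A^1_{\sigma\tau}$ at the frame beginning with $e_0$. Then \eqref{eq:a1_coefficient} gives $A^1_{\sigma\tau}(e_0)=\tfrac12 H^{-1}\bigl(u'\mapsto \partial F/\partial h_{11}(u')\bigr)(e_0)$, and now Lemma~\ref{lem:hemispherical_equator_essential} allows the perturbation to be supported near $\{u,-u\}\subset S_pM$, which lies on the equator of $e_0$. Precisely there the constant-rank hypothesis makes $Z=\theta(SU)\cap(W\cup aW)$ an embedded submanifold with $T_uZ=T_uS_pM\oplus\Image(h_p(u,\bullet))$, transverse to $h_{11}=N_0$; so $\partial F/\partial h_{11}|_{S_pM}$ can be perturbed freely near $\{u,-u\}$ while keeping $F|_{\theta(SU)}$ fixed. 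This sidesteps entirely the control problem you encountered. (The phrase ``at $(p,u)$'' in the statement should be read as ``at $p$'': what is shown, and what is needed for Theorem~\ref{thm:false_in_general}, is non-determinacy of some $A^1_{\sigma\tau}$ at some frame over $p$.)
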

\begin{proof}
	Since $\Image(h_p)\neq \Image(h_p(u,\bullet))$, we can find  $e_1\in S_pM$, $e_1\perp u$ with $h_p(e_1, e_1)\notin \Image(h_p(u,\bullet))$. Fix some $e_0\in S_pM$ such that $e_0\perp e_1,u$. Then $A_{\sigma\tau}^1(e_0)=\frac12 H^{-1}(\partial_\phi|_{0}F(u',\phi, h_{11}))(e_0)$ with $\sigma=\{1\},\tau=\{2,\dots,m-1\}$ by eq.~\eqref{eq:a1_coefficient}. 
	
	As $\theta$ has constant rank near $(p,u)$, the same is by Lemma~\ref{lem:theta_vs_h} true near $(p,-u)$. Hence there are open neighborhoods $U\subset M$ of $p$ and $W\subset S^{n-1}$ of $u$ such that $Z:=\theta(SU)\cap (W\cup a W)\subset S^{n-1}$ is an embedded submanifold, and $T_{u}Z=T_{u}\theta(S_pM)\oplus \Image(h_p(u,\bullet))$. As  $h_{11}=h_p(e_1,e_1)$ is transversal to $Z$ at $u$, we see that $u'\mapsto \frac{\partial}{\partial h_{11}}F(u')$, $u'\in S_pM$, can be perturbed arbitrarily in a small neighborhood of $\{u, -u\}\subset S_pM$ by perturbing $F$, while keeping $F|_{\theta(SU)}$ fixed and $F$ symmetric. Such a perturbation, if small enough, will not change the convexity of $F$, but by Lemma \ref{lem:hemispherical_equator_essential}, $H^{-1}(u'\mapsto \frac{\partial}{\partial h_{11}}F(u'))(e_0)$ will not be determined by $F|_{\theta(SU)}$.

\end{proof}

\begin{proof}[Proof of Theorem~ \ref{thm:false_in_general}]

	By directional regularity, $\theta$ has constant rank in a neighborhood of $(p,u)$.
	 Clearly, 
	$\dim \Image(h_p(u,\bullet))\leq m$.  Hence if $\dim \mathcal O_p^2 M>2m$, then $\Image(h_p)\neq \Image(h_p(u,\bullet))$. The statement follows now from Proposition~\ref{prop:thm B}
\end{proof}

When $m+3\leq n\leq 2m$, WWP generically holds by Theorem \ref{thm:special_dimension}, but can fail for particular submanifolds, as the following examples show.
\begin{example}\label{exm:M3R6}

Consider a generic surface $\Sigma^2\subset \RR^5$, which has $\dim\Image h_q^\Sigma=3$ for all $q\in\Sigma$, and $\dim \Image h_q^\Sigma(v,\bullet)=2$ for $(q,v)$ in an open dense subset of $ S\Sigma$. Now take $M=\Sigma\times \RR\subset \RR^6$. Writing $u=(v,\xi)\in T_{q,s}M=T_pM\oplus \RR$, we have $h_{q,s}(u_1,u_2)=h^\Sigma_q(v_1,v_2)$. Hence $\dim \Image h_p=3$ everywhere, while $\dim \Image h_p(u,\bullet)=2$  for $(p,u)$ in an open and dense subset of $ SM$. Fixing such $(p,u)$, $d\theta$ has constant rank $4$ in its neighborhood, and Proposition \ref{prop:thm B} shows that WWP fails.  Note that by taking $M= \Sigma\times \RR^{k}$ we can produce higher dimensional examples of codimension $3$ for which WWP fails.
\end{example}
\begin{example}

For a less trivial example, let $M^4\subset \RR^8$ be given by \[M=\{(x_1,x_2,x_3,x_4,x_1x_3,x_1x_4,x_2x_3,x_2x_4)\},\] which is nothing but the Segr\'e embedding $\mathbb P^2\times \mathbb P^2\hookrightarrow\mathbb P^8$ in affine charts. 
The second fundamental form can be represented in a certain frame of the normal bundle by \[h_x(u, v)=(u_1v_3+u_3v_1,u_1v_4+u_4v_1, u_2v_3+u_3v_2,u_2v_4+u_4v_2).\] Hence $\dim \Image h_x=4$ everywhere, while $\dim \Image h_x(u,\bullet)=3$ for generic $u$, and so Proposition \ref{prop:thm B} can be applied to deduce that WWP fails for $M$. By taking products with $\RR^k$  we can promote $M$ to higher dimensional examples  of codimension $4$ for which WWP fails.
\end{example}

\subsection{When WWP holds.}
We now turn to the positive results. The weak Weyl principle ultimately follows from the relationship between the derivatives of $F$ along the tangent and normal directions to the manifold.

\begin{proposition}\label{prop:weyl_equivalent_to_h}
	Assume $M^m\subset V^n$, and  $\theta:SM\to S^{n-1}$ is submersive in a dense subset of $(p,v)\in SM$. Then the coefficients $A^k_{\sigma\tau}(p,u;F)$ on $S_pM$ are determined by $(M, F|_M)$ for all $k\geq 0$. 
	
\end{proposition}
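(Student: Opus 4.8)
The plan is to combine Corollary~\ref{cor:coefficients_determined}, which reduces the claim to recovering certain transverse derivatives of $F$ along the fibers $S_pM$, with the fact that at a submersion point of $\theta$ the Finsler metric $F|_M$ already determines $F$ on an open subset of the ambient sphere. First I would pass to the linear setting: by the Remark following Corollary~\ref{cor:coefficients_determined}, each coefficient $A^k_{\sigma\tau}$ is linear in $F$, formula~\eqref{eq:general_coefficient} is valid for arbitrary smooth $F$, and $A^k_{\sigma\tau}$ is naturally a function on the orthonormal frame bundle $FM$ of $M$. Hence it suffices to prove that if $G\in C^\infty_+(S^{n-1})$ vanishes on $\theta(SM)$ --- the case $G=F_1-F_2$ with two norms satisfying $F_1|_M=F_2|_M$ --- then $A^k_{\sigma\tau}(\,\cdot\,;G)\equiv 0$ on $FM$ for all $k,\sigma,\tau$. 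I would also record, directly from~\eqref{eq:general_coefficient}, that $A^k_{\sigma\tau}(\,\cdot\,;G)$ is a smooth, hence continuous, function on $FM$, since $\mathcal C_n^{-1}G\in C^\infty_+(S^{n-1})$ and the integrand in~\eqref{eq:general_coefficient} depends smoothly on the point, the frame, and the integration variables over the compact domain of integration.

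Next comes the geometric input at submersion points. Let $D\subset SM$ be the set of points at which $\theta$ is submersive; it is open, because rank is lower semicontinuous, and dense by hypothesis. If $(p,u)\in D$, the local submersion theorem gives that $\theta(SM)$ contains an open neighborhood of $u$ in $S^{n-1}$; since $G$ vanishes on $\theta(SM)$, all derivatives of $G$ vanish at $u$. In particular $\frac{\partial^jG}{\partial N^j}(u)=0$ for every $N\in S^{n-1}$ and every $j\ge 0$, since the curve $\phi\mapsto\cos\phi\,u+\sin\phi\,N$ lies in that neighborhood for small $\phi$.

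The main point is then to globalize. Corollary~\ref{cor:coefficients_determined} controls the whole function $A^k_{\sigma\tau}(p,\,\cdot\,;G)|_{S_pM}$ in terms of the normal derivatives of $G$ along the entire fiber $S_pM$, so vanishing of $G$ near a single submersion point is not by itself enough. Here I would apply the Kuratowski--Ulam theorem to the locally trivial bundle $SM\to M$: since $SM\setminus D$ is closed and nowhere dense, the set $P=\{p\in M: D\cap S_pM\text{ is dense in }S_pM\}$ is comeager, hence dense, in $M$. For $p\in P$, $N\in\Image(h_p)$ and $j\le k$, the function $u'\mapsto\frac{\partial^jG}{\partial N^j}(u')$ on $S_pM$ vanishes on the dense set $D\cap S_pM$ by the previous paragraph, hence vanishes identically by continuity; Corollary~\ref{cor:coefficients_determined} then yields $A^k_{\sigma\tau}(p,\,\cdot\,;G)\equiv 0$ on $S_pM$. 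So $A^k_{\sigma\tau}(\,\cdot\,;G)$ vanishes over the dense set $P$, hence, by continuity on $FM$, everywhere, which proves the proposition.

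The hard part is precisely this last globalization step: the formula for the coefficients is inherently non-local along each fiber --- already $A^1_{\sigma\tau}$ involves the inverse hemispherical transform, cf.~\eqref{eq:a1_coefficient} --- so one cannot conclude $A^k_{\sigma\tau}(p,u;G)=0$ from local information near $u$ alone. The interplay of fiberwise density of submersion points, extracted from the global density hypothesis via Kuratowski--Ulam, with continuity of $A^k_{\sigma\tau}$ in the base point is what makes the argument work. If one prefers to avoid Baire-category language, the same conclusion follows from a limiting argument, since every point of $FM$ is a limit of points lying over $P$ and $A^k_{\sigma\tau}(\,\cdot\,;G)$ is continuous and vanishes at the latter.
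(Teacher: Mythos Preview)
Your proof is correct, but it takes a detour that the paper avoids. Both arguments start from Corollary~\ref{cor:coefficients_determined} and the observation that at a submersion point $(p,u)\in D$ the image $\theta(SM)$ contains a full neighborhood of $u$ in $S^{n-1}$, so all normal derivatives $\frac{\partial^j F}{\partial N^j}(u)$ are determined there. The difference lies in how one passes from $D$ to all of $SM$.

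You worry that $A^k_{\sigma\tau}(p,\,\cdot\,)$ depends on the normal derivatives along the \emph{entire} fiber $S_pM$, and therefore introduce Kuratowski--Ulam to secure fiberwise density of $D$ over a comeager set of base points, followed by continuity of $A^k_{\sigma\tau}$ in the base. The paper sidesteps this: the normal derivative $\frac{\partial^k F}{\partial N^k}(u)$ is itself a \emph{pointwise} quantity, continuous in $(p,u,N)$. Hence for arbitrary $(p,u)\in SM$ and $N\in T_pM^\perp$ one simply picks $(p_j,u_j)\in D$ with $(p_j,u_j)\to(p,u)$ and $N_j\in T_{p_j}M^\perp$ with $N_j\to N$, and obtains $\frac{\partial^k F}{\partial N^k}(u)=\lim_j \frac{\partial^k F}{\partial N_j^k}(u_j)$. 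This determines the normal derivatives at \emph{every} point of $SM$ directly, so Corollary~\ref{cor:coefficients_determined} applies fiberwise without any Baire-category step. Your concern about non-locality is thus misplaced: the non-locality is in passing from the derivatives to $A^k_{\sigma\tau}$, but the derivatives themselves are local and can be recovered one point at a time. Your argument is valid, just heavier than needed.
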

\begin{proof}
 By assumption, we have $\Image(h_p)=\Image(h_p(v,\bullet))=T_pM^\perp$ for $(p,v)$ in a dense subset $D\subset SM$.
It suffices by Corollary \ref{cor:coefficients_determined} to show that for any $u\in S_pM$, and any $N\in T_pM^\perp$, $\frac{\partial ^k F}{\partial N^k}(u)$ is determined by $F|_{\theta(SM)}$. 

Assume first that $(p,u)\in D$. Then $\theta(SM)\subset S^{n-1}$ contains a neighborhood of $u$ in $S^{n-1}$, concluding this case.
For general $(p,u)\in SM$ and $N\in T_pM^\perp$, we choose sequences $D\ni(p_j,u_j)\to (p,u)$, $T_{p_j}M^\perp\ni N_j\to N$, and obtain $\frac{\partial ^kF}{\partial N^k}(u)=\lim_{j\to \infty}\frac{\partial ^k F}{\partial N_j^k}(u_j)$.
\end{proof}

\begin{lemma}\label{lem:first_coefficients_determined}
	Assume $M^m\subset V^n$,   and $\Image(h_p)=\Image(h_p(u,\bullet))$ in a dense subset of $(p,u)\in SM$. Then the coefficients $A^1_{\sigma\tau}(p,u;F)$ on $S_pM$ are determined by $(M, F|_M)$. 
\end{lemma}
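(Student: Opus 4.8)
The plan is to reduce everything, via formula~\eqref{eq:a1_coefficient}, to a statement about the first normal derivatives of $F$. By \eqref{eq:a1_coefficient} we have
$$A^1_{\sigma\tau}(p,u) = \frac{1}{2}\, H^{-1}\!\Big(u'\mapsto \frac{\partial F}{\partial h_{\sigma(1),\tau^c}}(u')\Big)(u),$$
where the normal vector $h_{\sigma(1),\tau^c} = h_p(e_{\sigma(1)},e_{\tau^c(1)})$ lies in $\Image(h_p)$ and depends only on the immersion and the chosen frame, not on $F$. Since $H^{-1}$ is a fixed operator (well defined on odd functions, and the argument above is odd because $F$ is even), it suffices to prove: for every $p$ and every $N\in\Image(h_p)$, the function $\Phi_N\colon S_pM\to\RR$, $\Phi_N(u')=\frac{\partial F}{\partial N}(u')$, is determined by $(M,F|_M)$. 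I would first record the elementary observation that, since $u'\perp N$, one has $\Phi_N(u')=d_{u'}F(N)$, the differential of $F$ at $u'$ evaluated on $N\in T_{u'}S^{n-1}$ (using \eqref{eq:directional_derivative} to pass to a unit vector).

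The geometric heart is the same mechanism used in the proof of Proposition~\ref{prop:weyl_equivalent_to_h}, but exploited only at first order. Let $D\subset SM$ be the dense set on which $\Image(h_q)=\Image(h_q(w,\bullet))$. Fix $(q,w)\in D$. By Lemma~\ref{lem:theta_vs_h}, $\Image(h_q)=\Image(h_q(w,\bullet))\subseteq\Image(d_{q,w}\theta)$, so any $N\in\Image(h_q)$ can be written $N=d_{q,w}\theta(\xi)$ for some $\xi\in T_{(q,w)}SM$. Choosing a curve $\gamma$ in $SM$ with $\gamma(0)=(q,w)$ and $\gamma'(0)=\xi$, the curve $\theta\circ\gamma$ lies in $\theta(SM)\subset S^{n-1}$ and has velocity $N$ at $w$, whence
$$d_wF(N)=\frac{d}{dt}\Big|_{t=0}F\big(\theta(\gamma(t))\big).$$
But $t\mapsto F(\theta(\gamma(t)))$ is simply the value of the restricted Finsler norm $F|_M$ along the curve of unit tangent vectors $\gamma\subset SM$, so it, and hence its derivative at $0$, is determined by $(M,F|_M)$. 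This settles the claim at every point of $D$.

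To upgrade this to all of $SM$ I would run the usual density argument, exactly as in the last step of the proof of Proposition~\ref{prop:weyl_equivalent_to_h}. Given arbitrary $(p,u)\in SM$ and $N\in\Image(h_p)$, write $N=\sum_i h_p(v_i,v_i')$ with $v_i,v_i'\in T_pM$; pick $(p_j,u_j)\in D$ with $(p_j,u_j)\to(p,u)$, transport the $v_i,v_i'$ to $T_{p_j}M$ using a fixed coordinate chart about $p$, and set $N_j=\sum_i h_{p_j}(v_i(p_j),v_i'(p_j))\in\Image(h_{p_j})$, so that $N_j\to N$ by smoothness of $h$. Then $\Phi_{N_j}(u_j)=d_{u_j}F(N_j)\to d_uF(N)=\Phi_N(u)$ by continuity of $dF$; since the approximating sequence is chosen independently of $F$ and each $\Phi_{N_j}(u_j)$ is determined by $(M,F|_M)$, so is the limit. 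Feeding the resulting function into $H^{-1}$ then yields the determination of $A^1_{\sigma\tau}$.

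I do not expect a serious obstacle here; the proof is a first-order version of Proposition~\ref{prop:weyl_equivalent_to_h}. The points that need care are purely bookkeeping: checking the parity of the argument of $H^{-1}$ so that the inverse is legitimately applied; transporting the tangent vectors $v_i,v_i'$ along the approximating sequence; and observing that being "determined by $(M,F|_M)$" survives the pointwise limit precisely because the sequence $(p_j,u_j,N_j)$ does not depend on $F$ (in particular one must resist the temptation to work inside a single fiber $S_pM$ — the base point genuinely has to move in the limiting step). The one conceptual gain over Proposition~\ref{prop:weyl_equivalent_to_h} is that one does not need $\theta(SM)$ to be open in $S^{n-1}$: it is enough that its tangent directions contain $\Image(h_p)$, because for $A^1$ only the first-order behaviour of $F$ along $\theta(SM)$ is involved.
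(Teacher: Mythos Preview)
Your proof is correct and follows essentially the same route as the paper's. The only cosmetic difference is that where you invoke Lemma~\ref{lem:theta_vs_h} abstractly to produce a curve in $SM$ whose $\theta$-velocity is $N$, the paper writes this curve down explicitly via parallel transport (which is precisely the construction inside the proof of Lemma~\ref{lem:theta_vs_h}); the density step and the reduction via \eqref{eq:a1_coefficient}/Corollary~\ref{cor:coefficients_determined} are identical.
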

\begin{proof}
	Consider first $(p,u)$ in the given dense set $D\subset SM$. Then for any $N\in \Image (h_p)$ we can find $v\in T_pM$ such that $h_p(u,v)=N$.  Choose a curve $\gamma(t)$ with $\gamma(0)=p$, $\gamma'(0)=v$, and let $u(t)$ be the parallel transport of $u$ along $\gamma(t)$. It follows that 
	\[ \frac{\partial F}{\partial N}(u)=\left.\frac{d}{dt}\right|_{t=0}F(u(t)).\]
	is determined by $F|_M$. We conclude the proof by continuity as in Proposition \ref{prop:weyl_equivalent_to_h}.
\end{proof}

\begin{lemma}\label{lem:codimension2}
	Assume $M^m\subset V^n$ and $n=m+2$. If $\dim \Image h_y=1$ in some open neighborhood of $p\in M$, then either 	
	a neighborhood of $p$ in $M$ lies in a hyperplane, or $h_p:T_pM\times T_pM\to \Image(h_p)$ is a symmetric form of rank one.
\end{lemma}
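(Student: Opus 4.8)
The plan is to analyze the second fundamental form $h_y \colon \Sym^2(T_yM) \to \Image(h_y)$ pointwise under the hypothesis that $\dim\Image h_y=1$ on a neighborhood of $p$. Since $n=m+2$, the normal bundle has rank $2$, but the image of $h_y$ is a line $\ell_y \subset T_yM^\perp$ varying smoothly in $y$. After choosing a smooth unit section $N$ of this line bundle over a neighborhood of $p$, I can write $h_y = h_y^N \cdot N$ where $h_y^N$ is a nonzero symmetric bilinear form on $T_yM$; the content of the lemma is a statement about the rank of $h_p^N$, namely that it is either $0$ in a whole neighborhood (forcing flatness of $M$, hence that $M$ lies in a hyperplane) or exactly $1$ at $p$. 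So the real claim is: \emph{if $h_p^N$ has rank $\ge 2$, we reach a contradiction with $\dim\Image h_y=1$ nearby.}

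**The Codazzi/Ricci obstruction.** The key step is to exploit the Ricci equation (or the Codazzi equation) for submanifolds of Euclidean space. With $\Image h_y$ one-dimensional, the normal bundle splits smoothly (near $p$) as $\ell_y \oplus \ell_y^\perp$, and for $\xi$ a local unit section of $\ell_y^\perp$ we have $h^\xi \equiv 0$ identically. The Ricci equation then reads $\langle R^\perp(X,Y)N,\xi\rangle = \langle [A_N,A_\xi]X,Y\rangle = 0$ since $A_\xi = 0$, so the normal connection on the splitting is flat enough that $\xi$ can be chosen parallel; more usefully, differentiating $h^\xi\equiv 0$ via the Codazzi equation gives $(\nabla^\perp_X h)(Y,Z)$ has no $\xi$-component, which combined with $\nabla^\perp_X \xi \in \ell_y$ being forced to vanish (after suitable choice) yields that $\nabla^\perp_X N$ is tangentially... — the cleaner route is: the Codazzi equation $(\overline\nabla_X h)(Y,Z)$ is symmetric in all three arguments, and projecting onto $\xi$ gives $\langle \nabla^\perp_X N, \xi\rangle\, h^N(Y,Z)$ symmetric in $X,Y,Z$. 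Writing $\omega(X) = \langle\nabla^\perp_X N,\xi\rangle$, this says $\omega(X)h^N(Y,Z) = \omega(Y)h^N(X,Z)$ for all $X,Y,Z$. If $h^N_p$ has rank $\ge 2$, pick $Z$ and then $X,Y$ with $h^N(X,Z)\neq 0$, $h^N(Y,Z)$ not proportional appropriately: one deduces $\omega \equiv 0$ at $p$, i.e. the splitting is parallel at $p$. But then the Gauss equation forces the curvature $R_p$ to equal the curvature operator built from $h^N_p$ alone, and — this is where I'd push — the fact that $h^\xi\equiv 0$ on a \emph{neighborhood}, not just at $p$, combined with $\omega\equiv 0$ near $p$, makes $\ell^\perp$ a parallel, flat, $h$-free normal subbundle on the neighborhood; by the Lemma on reduction of codimension (e.g. as in Spivak vol.~4, or Dajczer's book), $M$ locally lies in an affine hyperplane $\RR^{m+1}\subset\RR^{m+2}$, which is the first alternative.

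**Getting rank one at $p$.** So the remaining case is that $h^\xi\equiv 0$ nearby but $M$ does \emph{not} lie in a hyperplane near $p$; then the reduction-of-codimension argument must fail, which by the above forces $\omega \not\equiv 0$ near $p$, and in particular $h^N$ cannot have rank $\ge 2$ at points where $\omega\neq 0$; by density/openness this pushes to $p$ itself after a limiting argument, giving $\rank h_p^N = 1$ (it is nonzero since $\dim\Image h_p=1$), which is the second alternative. An alternative, more hands-on route that avoids citing reduction of codimension: work directly with the relation $\omega(X)h^N(Y,Z) = \omega(Y)h^N(X,Z)$. If $h^N_p$ has rank $\ge 2$, a short linear-algebra argument (choose $Y=X$ to get $\omega(X)h^N(X,Z)=\omega(X)h^N(X,Z)$ trivially, then vary: for $X,Y$ linearly independent in a $2$-dim'l subspace on which $h^N$ is nondegenerate, the relation with $Z$ ranging over that subspace forces $\omega(X)=\omega(Y)=0$) shows $\omega$ vanishes on the rank-$\ge 2$ locus, hence $\nabla^\perp N = 0$ there, hence $A_N$ is a Codazzi tensor with $\xi$ parallel and $A_\xi=0$, and then the standard argument (integrate the distribution $\ker A_N$, which is then autoparallel and totally geodesic in $M$ \emph{and} in $\RR^{m+2}$) produces the hyperplane.

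**Main obstacle.** The delicate point is the passage from the pointwise algebraic statement to the global ``lies in a hyperplane,'' i.e. correctly invoking (or reproving) the reduction-of-codimension theorem and handling the locus where $h^N$ drops rank — one must be careful that ``$\dim\Image h_y=1$ on an open neighborhood'' is genuinely used (it rules out $h$ vanishing on a dense-but-not-open set and then jumping), and that the dichotomy is clean at $p$ itself rather than merely generically. I expect the cleanest writeup to separate two cases by whether $\nabla^\perp N$ vanishes identically near $p$: if yes, reduction of codimension gives the hyperplane; if no, the Codazzi relation $\omega(X)h^N(Y,Z)=\omega(Y)h^N(X,Z)$ with $\omega\not\equiv 0$ forces $\rank h^N \le 1$ on a dense open set and hence, by continuity of $h$ and lower semicontinuity of rank together with $h_p\neq 0$, exactly $\rank h^N_p = 1$.
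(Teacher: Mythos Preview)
Your argument is correct. The Codazzi identity $\omega(X)\,h^N(Y,Z)=\omega(Y)\,h^N(X,Z)$ (obtained by projecting Codazzi onto $\xi$), the linear-algebra step forcing $\omega=0$ wherever $\rank h^N\ge 2$, and the invocation of Erbacher's reduction-of-codimension theorem when $\omega\equiv 0$ near $p$ all work as you describe; in the complementary case $p$ lies in the closure of $\{\omega\neq 0\}\subset\{\rank h^N\le 1\}$, which is closed, so $\rank h^N_p=1$.

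The paper's proof is the coordinate incarnation of the same idea. It writes $M$ locally as the graph of $(f,g)\colon\RR^m\to\RR^2$, so that the hypothesis $\dim\Image h_y=1$ becomes $\nabla^2 g=\gamma\,\nabla^2 f$ for a smooth function $\gamma$; the dichotomy is whether $\gamma$ is locally constant (then $g-\gamma f$ is affine and $M$ lies in a hyperplane, bypassing Erbacher entirely) or not (then equality of mixed third partials of $g$ yields $\partial_i\gamma\cdot\partial_j^2 f=\partial_j\gamma\cdot\partial_i\partial_j f$, which at points with $\nabla\gamma\neq 0$ in coordinates diagonalizing $\nabla^2 f$ forces $\rank\nabla^2 f\le 1$). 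This relation is precisely your Codazzi identity with $\omega\leftrightarrow d\gamma$ and $h^N\leftrightarrow\nabla^2 f$, and the two dichotomies match. Your route is more invariant and conceptually cleaner, at the price of citing reduction of codimension; the paper's is entirely self-contained and elementary.

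Two minor expository points: your setup paragraph slightly misstates the target (the dichotomy is $\rank h^N_p\ge 2\Rightarrow$ hyperplane versus $\rank h^N_p=1$; rank $0$ never occurs since $\dim\Image h_p=1$, and rank $\ge 2$ does not lead to a contradiction but to the first alternative). And to pass from $\omega_p=0$ to $\omega\equiv 0$ on a neighborhood you should make explicit that $\{\rank h^N\ge 2\}$ is open by lower semicontinuity of rank---you use this implicitly.
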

\begin{proof}
	We may assume $p=0$, and $M$ is the graph of $F=(f,g):\RR^m\to \RR^2$, with $\nabla f(0)=\nabla g(0)=0$.
	Denote $\overline F(x)=(x,F(x)):\RR^m\to \RR^{m+2}$, and for $v\in\ \RR^m$ write
	$$\overline v= \overline F_* v|_x= (v, \pder[F]{v}(x))\in T_{\overline F(x)}M.$$
	Observe that the vector fields
	$$ N_f= (\nabla f, -1,0), \qquad N_g= (\nabla g,0,-1)$$
	span $T_{\overline F(x)}M^\perp$ for every $x\in \RR^m$.
	
	If $y=\overline F(x)$ and $u,v\in \RR^m$, then 
		$$\langle h_y(\overline u, \overline v), N_f\rangle = \frac{\partial^2 f}{\partial u\partial v}(x)$$
	and 
	$$\langle h_y(\overline u, \overline v), N_g\rangle = \frac{\partial^2 g}{\partial u\partial v}(x).$$

	Writing $h_y(u, v)=a(u,v)N_f+b(u,v)N_g$, we find \[a(u,v)=D^{-1}(|N_g|^2 \frac{\partial^2 f}{\partial u\partial v}-\langle N_f, N_g\rangle  \frac{\partial^2 g}{\partial u\partial v}) \]
\[b(u,v)=D^{-1}(-\langle N_f, N_g\rangle  \frac{\partial^2 f}{\partial u\partial v}+|N_f|^2 \frac{\partial^2 g}{\partial u\partial v}), \]
where $D=|N_f|^2|N_g|^2-\langle N_f, N_g\rangle ^2$.

Thus $\dim \Image h_y=1$ implies 
there exist smooth functions $\tilde \alpha,\tilde \beta\colon U\to \RR$ on a neighborhood of $0\in \RR^m$, $(\tilde\alpha,\tilde\beta)\neq (0,0)$, such that for all $u,v\in \RR^m$,
$$\tilde \alpha a(u,v)+\tilde \beta b(u,v) =0.$$

	Plugging in the expressions for $a,b$ we find that there exist smooth functions $\alpha, \beta\colon U\to \RR$, $(\alpha,\beta)\neq (0,0)$ such that $$ \alpha(x) \nabla^2 f(x) + \beta(x) \nabla^2 g(x)=0.$$
			We may assume $\beta(0)\neq 0$, and so setting $\gamma=-\alpha/\beta$ we find \begin{equation}\label{eq:hessians_proportional}\nabla^2g=\gamma \nabla^2f.\end{equation}

	If $\gamma$ is constant near $0$, it follows that $g=\gamma f$ near $0$, so $M$ locally lies in a hyperplane. 
	Otherwise, $0\in \supp(\nabla \gamma)$, and we choose a sequence $\RR^m\ni z_k\to 0$ with $\nabla \gamma(z_k)\neq 0$. Given $k$, we can find Euclidean coordinates $x_i$ on $\RR^m$ such that $\frac{\partial^2 f}{\partial x_i\partial x_j}(z_k)=0$ for all $i\neq j$. By assumption, there is $i=i(k)$ such that $\frac{\partial \gamma}{\partial x_i}(z_k)\neq 0$.  From \eqref{eq:hessians_proportional} we find $$\frac{\partial\gamma}{\partial x_i}\frac{\partial ^2f}{\partial x_j^2}=\frac{\partial\gamma}{\partial x_j}\frac{\partial ^2f}{\partial x_i\partial x_j}, $$ so that for all $j\neq i$ we have $\frac{\partial ^2f}{\partial x_j^2}(z_k)=0$. Thefefore $\rank \nabla f^2(z_k) \leq 1$ for all $k$, and consequently $\rank \nabla f^2(0) \leq 1$. This readily implies that $h_p$ has rank at most one, and since $h_p\neq 0$ we conclude $\rank h_p=1$.
	
	\end{proof}
 We will also need a fact from linear algebra.
 
 \begin{lemma}\label{lem:linear_algebra}
 	Let $h\colon \Sym^2(V)\to W$ be a linear map. If $\dim \Image h\leq  2\leq \dim V$, then there exists an open and dense subset $U\subset V$, such that for every  $u\in U$, the map $V\to W$, $v\mapsto h(u,v)$ is onto $\Image h$.
 \end{lemma}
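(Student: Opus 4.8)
The plan is to take $U$ to be the set of $u\in V$ for which the linear map $h_u\colon V\to\Image h$, $h_u(v)=h(u,v)$, is surjective, i.e.\ has rank equal to $d:=\dim\Image h$, and to show this set is open, dense, and nonempty. Openness is immediate from the lower semicontinuity of rank. For density I would fix a basis of $V$ and a basis of $\Image h$, so that $h_u$ is represented by a $d\times\dim V$ matrix whose entries are linear forms in $u$; then $U$ is the locus where some $d\times d$ minor is nonzero, and each such minor is a homogeneous polynomial of degree $d$ in $u$. Consequently, once it is known that $U\neq\emptyset$, at least one of these minors is not identically zero, its zero locus is a proper algebraic (hence nowhere dense) subset of $V$ containing $V\setminus U$, and therefore $U$ is open and dense. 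It thus remains only to exhibit a single $u_0\in U$.

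For $d=0$ one has $h=0$ and $U=V$. For $d=1$: since $\Sym^2 V$ is spanned by the tensors $z\otimes z$, the space $\Image h$ is spanned by the vectors $h(z,z)$, so one may pick $z\in V$ with $h(z,z)\neq 0$; then $h(z,z)\in\Image h_z$ already spans the one-dimensional space $\Image h$, and $u_0=z$ works.

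The case $d=2$ is the heart of the matter, and the step I expect to require the most care. Again using that $\Image h$ is spanned by the squares $h(z,z)$, I would choose $z_1,z_2\in V$ with $h(z_1,z_1),h(z_2,z_2)$ a basis of $\Image h$, and for $\lambda\in\RR$ set $u_\lambda=z_1+\lambda z_2$ and
\[ \omega(\lambda)=h(u_\lambda,z_1)\wedge h(u_\lambda,z_2)\ \in\ \textstyle\bigwedge^2\Image h\cong\RR. \]
Expanding $h(u_\lambda,z_i)$ bilinearly and using the symmetry $h(z_1,z_2)=h(z_2,z_1)$ --- which annihilates the term $h(z_1,z_2)\wedge h(z_1,z_2)$ --- one finds that $\omega(\lambda)$ is a polynomial in $\lambda$ of degree at most $2$ whose coefficient of $\lambda$ equals $h(z_1,z_1)\wedge h(z_2,z_2)\neq 0$. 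Hence $\omega$ is not identically zero, so for all but at most two values of $\lambda$ the vectors $h(u_\lambda,z_1)$ and $h(u_\lambda,z_2)$ are linearly independent in the two-dimensional space $\Image h$, whence they span it and $h_{u_\lambda}$ is surjective. Any such $u_\lambda$ serves as $u_0$, completing the argument. The only genuinely nonroutine ingredient is this last wedge computation; everything else is bookkeeping about rank and algebraic subsets.
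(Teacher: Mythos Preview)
Your argument is correct. Both you and the paper begin by observing that the set $U$ of $u$ with $h_u$ surjective is the complement of an algebraic set, so it suffices to show $U\neq\emptyset$; and both treat the cases $d\le 1$ and $d=2$ separately. The difference lies in the $d=2$ case. The paper argues by contradiction: assuming $h^1(u,\cdot),h^2(u,\cdot)$ are linearly dependent for every $u$, one finds for each $u$ a nonzero functional $\lambda(u)\in W^*$ with $h(u,V)\subset\ker\lambda(u)$, and then a short case analysis (according to whether $h(v_1,v_2)$ vanishes or not) forces all $h(v,v)$ to be proportional, contradicting $d=2$. Your route is direct: you pick $z_1,z_2$ with $h(z_1,z_1),h(z_2,z_2)$ independent and show that the quadratic polynomial $\omega(\lambda)=h(u_\lambda,z_1)\wedge h(u_\lambda,z_2)$ has linear coefficient $h(z_1,z_1)\wedge h(z_2,z_2)\neq 0$, exhibiting an explicit $u_\lambda\in U$. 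Your approach is slightly more constructive and avoids the case split; the paper's contradiction argument perhaps makes the role of the symmetry $h(v_1,v_2)=h(v_2,v_1)$ a bit more transparent. Either way the key input is the same: symmetry, together with the fact that $\Image h$ is spanned by the values $h(z,z)$.
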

 \begin{proof}
 	
 	Let us first assume that $\dim \Image h=1$. The set of all $u\in V$ such that $h(u,v)=0$ for all $v\in V$ is a linear subspace of $V$. Since $h\neq 0$, this subspace must be different from $V$. Its complement is therefore open and dense. 
 	
 	Let us assume now  that $\dim \Image h=2$. Note that without loss of generality we may assume that $h$ is surjective, that is $\dim W=2$. 
 	Choose a basis $w_1, w_2$ of $W$. With respect to this basis write $h=(h^1,h^2)$. For every 
 	$u\in V$ we have $h^1(u,\bullet),h^2(u,\bullet)\in V^*$. Let $Z$ be the set of all $u\in V$ such that 
 	the functionals $h^1(u,\bullet), h^2(u,\bullet)\in V^*$ are linearly dependent. Clearly $Z$ is the zero set of a system of polynomial equations on $V$.  Let $U$ be its complement.  Then either $U=\emptyset$, or $U$ is open and dense.  In the latter case we are done. 
 	
 	Otherwise, $h^1(u,\bullet), h^2(u,\bullet)\in V^*$ are linearly dependent for every $u\in V$. This implies that for every $u\in V$ there exists a non-zero $\lambda(u)\in W^*$ such that $h(u,v)\in \ker \lambda(u) $ for every $v\in V$. 
 	
 	 	Let $v_1,v_2\in V$. If
 	$h(v_1,v_2)=0$, then since $\{h(v_1+v_2,u)\colon u\in V\}$ is contained in the line $\Ker(\lambda(v_1+v_2))$, we have in particular that $h(v_1+v_2,v_1)=h(v_1,v_1)$, $h(v_1+v_2, v_2)=h(v_2,v_2)$ are proportional.
 	If $h(v_1,v_2)\neq 0$, then since $h(v_1,v_2)=h(v_2,v_1)$, we must have $\ker \lambda(v_1)=\ker \lambda(v_2)$, hence  $h(v_1,v_1)$ and $h(v_2,v_2)$ must again be proportional. We conclude that $\dim \Image  h=1$, in contradiction.

 \end{proof}
 \textit{Proof of Theorem \ref{thm:special_dimension}}.
 Recall that  by Proposition \ref{prop:k=1_is_enough}, we only need to consider $\mu_1^F$. Let us show that all $A^k_{\sigma\tau}(p,u)$ are determined by the restriction of $F$ to a neighborhood of $p$ in $M$.
 
 Assume first $n=m+1$, and consider $p\in M$. If $h_p=0$ then  $\partial^k F/\partial h_p(v_1,v_2)^k=0$ for all $k\geq 1$ and $v_1,v_2\in T_pM$, and we conclude using Corollary \ref{cor:coefficients_determined}.
 If $h_p\neq 0$, it holds by Lemma \ref{lem:linear_algebra} that $\Image h_p(u,\bullet)=\Image h_p=T_pM^\perp$ in a dense set of $(p,u)\in SM$ near $p$. Proposition \ref{prop:weyl_equivalent_to_h} now concludes this case.
 
 Assume now $n=m+2$, and fix $p\in M$. If $h_p=0$, we conclude as in the previous case.
 If $h_p\neq0$, then by the lower semicontinuity of $\dim \Image h_x$ either  $\dim \Image h_x=1$ in a neighborhood of $p$, or $p$ is a limit point of $\{x\in M: \Image h_x=T_xM^\perp\}$.   Hence by Lemma \ref{lem:codimension2}, one of the following three must hold:
 
 \begin{enumerate}
 	\item $p$ is a limit point of $W:=\{x\in M: \text{a neighborhood of }x\text{ lies in a hyperplane} \}$.
 	\item For every $x\in M$ in a neighborhood of $p$, it holds that $\dim \Image(h_x)=1$, and for any $N\in T_xM^\perp$ we have $\rank\langle h_x, N\rangle\leq 1$.
 	\item $p$ is a limit point of $\{x\in M: \Image h_x=T_xM^\perp\}$.
 \end{enumerate} In the first case, we use the previously considered hypersurface case and the linear Weyl principle to conclude that for some neighborhood $U$ of $p$, $A^k_{\sigma\tau}$ is determined in $U\cap W$ by the restriction of $F$ to $U$, and by continuity it is also determined at $p$. In the second case it follows from  \eqref{eq:general_coefficient} that $A^k_{\sigma\tau}(u)=0$ for $u\in S_pM$ and $k\geq 2$. Recall also that $A^0(u)$ is intrinsically determined by Corollary \ref{cor:a0_intrinsic}. By Lemma \ref{lem:linear_algebra}, it holds in a dense set of $(x,u)$ in $SM$ near $p$ that $\Image h_x=\Image h_x(u,\bullet)$; hence by Lemma \ref{lem:first_coefficients_determined} all coefficients $A^1_{\sigma\tau}(u)$ are intrinsically defined, concluding the proof in this case. In the last case, we conclude with Lemma \ref{lem:linear_algebra} and Proposition \ref{prop:weyl_equivalent_to_h}.

 Finally for $m+3\leq n\leq 2m$,  the statement follows from Proposition \ref{prop:weyl_equivalent_to_h} and Lemma \ref{lem:generic_perturbation}.
\qed

\section{Concluding remarks and open problems} \label{sec:prolbems}

One remaining open question is whether the full Weyl principle holds in codimensions one and two, namely whether the restriction of the Holmes-Thompson intrinsic volumes is independent of the isometric immersion $M\looparrowright V$. While at first glance it appears unlikely due to the appearance of the second fundamental form in \eqref{eq:general_coefficient}, rigidity phenomena might have the final say in this matter. 

For instance, Weyl's theorem for generic Riemannian manifolds of dimension at least three that are embedded as hypersurfaces in $\RR^n$ is trivial, since in this case the first fundamental form determines the second fundamental form up to sign, see \cite{spivak5}. Assuming a similar phenomenon occurs in the Finsler setting, the validity of the full Weyl principle appears plausible in light of the weak Weyl principle for hypersurfaces. In codimension two no such rigidity takes place in the Riemannian setting, and consequently we conjecture that the full Weyl principle fails in the Finslerian setting in codimension two.

 In addition, it remains unknown for $m+3\leq n\leq 2m$ whether the set of immersions $M^m\looparrowright V^n$ for which WWP holds contains in fact a dense open set, rather than merely a residual set, as established in Theorem \ref{thm:special_dimension}. 

\begin{bibdiv}
\begin{biblist}

\bib{alesker_mcmullen}{article}{
   author={Alesker, Semyon},
   title={On P. McMullen's conjecture on translation invariant valuations},
   journal={Adv. Math.},
   volume={155},
   date={2000},
   number={2},
   pages={239--263},
   issn={0001-8708},
   review={\MR{1794712}},
}

	\bib{alesker_integral}{article}{
		author={Alesker, Semyon},
		title={Valuations on manifolds and integral geometry},
		journal={Geom. Funct. Anal.},
		volume={20},
		date={2010},
		number={5},
		pages={1073--1143},
	}

\bib{alesker4}{article}{
	author={Alesker, Semyon},
	title={Theory of valuations on manifolds. IV. New properties of the
		multiplicative structure},
	conference={
		title={Geometric aspects of functional analysis},
	},
	book={
		series={Lecture Notes in Math.},
		volume={1910},
		publisher={Springer, Berlin},
	},
	date={2007},
	pages={1--44},
}

\bib{alesker_bernig}{article}{
   author={Alesker, Semyon},
   author={Bernig, Andreas},
   title={The product on smooth and generalized valuations},
   journal={Amer. J. Math.},
   volume={134},
   date={2012},
   number={2},
   pages={507--560},
   issn={0002-9327},
   review={\MR{2905004}},
}

\bib{alesker_bernstein}{article}{
   author={Alesker, Semyon},
   author={Bernstein, Joseph},
   title={Range characterization of the cosine transform on higher
   Grassmannians},
   journal={Adv. Math.},
   volume={184},
   date={2004},
   number={2},
   pages={367--379},
   issn={0001-8708},
   review={\MR{2054020}},
}

\bib{alesker_fu_barcelona}{collection}{
	author={Alesker, Semyon},
	author={Fu, Joseph H. G.},
	title={Integral geometry and valuations},
	series={Advanced Courses in Mathematics. CRM Barcelona},
	note={Lectures from the Advanced Course on Integral Geometry and
		Valuation Theory held at the Centre de Recerca Matem\`atica (CRM),
		Barcelona, September 6--10, 2010;
		Edited by Eduardo Gallego and Gil Solanes},
	publisher={Birkh\"{a}user/Springer, Basel},
	date={2014},
	pages={viii+112},
	review={\MR{3380549}},
}

\bib{Alesker:Book}{book}{
   author={Alesker, Semyon},
   title={Introduction to the theory of valuations},
   series={CBMS Regional Conference Series in Mathematics},
   volume={126},
   publisher={Published for the Conference Board of the Mathematical
   Sciences, Washington, DC; by the American Mathematical Society,
   Providence, RI},
   date={2018},
   pages={vi+83},
   isbn={978-1-4704-4359-7},
   review={\MR{3820854}},
}

\bib{alvarez_problems}{article}{
	author={\'{A}lvarez Paiva, J. C.},
	title={Some problems on Finsler geometry},
	conference={
		title={Handbook of differential geometry. Vol. II},
	},
	book={
		publisher={Elsevier/North-Holland, Amsterdam},
	},
	date={2006},
	pages={1--33},
	review={\MR{2194667}},
}

\bib{AlvarezFernandes:Crofton}{article}{
   author={\'{A}lvarez Paiva, J. C.},
   author={Fernandes, E.},
   title={Crofton formulas in projective Finsler spaces},
   journal={Electron. Res. Announc. Amer. Math. Soc.},
   volume={4},
   date={1998},
   pages={91--100},
   issn={1079-6762},
   review={\MR{1655987}},
}

\bib{alvarez-fernandes}{article}{
	author={\'{A}lvarez Paiva, J. C.},
	author={Fernandes, E.},
	title={Gelfand transforms and Crofton formulas},
	journal={Selecta Math. (N.S.)},
	volume={13},
	date={2007},
	number={3},
	pages={369--390},
	review={\MR{2383600}},
}

\bib{alvarez-thompson}{article}{
   author={\'{A}lvarez Paiva, J. C.},
   author={Thompson, A. C.},
   title={Volumes on normed and Finsler spaces},
   conference={
      title={A sampler of Riemann-Finsler geometry},
   },
   book={
      series={Math. Sci. Res. Inst. Publ.},
      volume={50},
      publisher={Cambridge Univ. Press, Cambridge},
   },
   date={2004},
   pages={1--48},
   review={\MR{2132656}},
}

\bib{bernig_HT}{article}{
	author={Bernig, Andreas},
	title={Valuations with Crofton formula and Finsler geometry},
	journal={Adv. Math.},
	volume={210},
	date={2007},
	number={2},
	pages={733--753},
	review={\MR{2303237}},
}

\bib{bernig-brocker}{article}{
	author={Bernig, Andreas},
	author={Br\"{o}cker, Ludwig},
	title={Valuations on manifolds and Rumin cohomology},
	journal={J. Differential Geom.},
	volume={75},
	date={2007},
	number={3},
	pages={433--457},
	review={\MR{2301452}},
}

\bib{BFaS}{article}{
   author={Bernig, Andreas},
   author={Faifman, Dmitry},
   author={Solanes, Gil},
   title={Curvature Measures of Pseudo-Riemannian manifolds},
   status={preprint},
}

\bib{bfs}{article}{
   author={Bernig, Andreas},
   author={Fu, Joseph H. G.},
   author={Solanes, Gil},
   title={Integral geometry of complex space forms},
   journal={Geom. Funct. Anal.},
   volume={24},
   date={2014},
   number={2},
   pages={403--492},
   issn={1016-443X},
   review={\MR{3192033}},
   doi={10.1007/s00039-014-0251-1},
}

\bib{burago_ivanov_nash}{article}{
	author={Burago, Dmitri},
author={Ivanov, Sergei},
title={Isometric embeddings of Finsler
	manifolds},
journal={Algebra i Analiz},
volume={5},
date={1993},
number={1},
pages={179--192},
}
\bib{burago-ivanov}{article}{
	author={Burago, Dmitri},
	author={Ivanov, Sergei},
	title={On intrinsic geometry of surfaces in normed spaces},
	journal={Geom. Topol.},
	volume={15},
	date={2011},
	number={4},
	pages={2275--2298},
	review={\MR{2862157}},
}

\bib{faifman_contact}{article}{
	author={Faifman, Dmitri},
	title={Contact integral geometry and the Heisenberg algebra},
}

\bib{Fu:Subanalytic}{article}{
   author={Fu, Joseph H. G.},
   title={Curvature measures of subanalytic sets},
   journal={Amer. J. Math.},
   volume={116},
   date={1994},
   number={4},
   pages={819--880},
   issn={0002-9327},
   review={\MR{1287941}},
}

\bib{Fu:IGR}{article}{
   author={Fu, Joseph H. G.},
   title={Integral geometric regularity},
   conference={
      title={Tensor valuations and their applications in stochastic geometry
      and imaging},
   },
   book={
      series={Lecture Notes in Math.},
      volume={2177},
      publisher={Springer, Cham},
   },
   date={2017},
   pages={261--299},
   review={\MR{3702376}},
}

\bib{FPR:WDC}{article}{
   author={Fu, Joseph H. G.},
   author={Pokorn\'{y}, Du\v{s}an},
   author={Rataj, Jan},
   title={Kinematic formulas for sets defined by differences of convex
   functions},
   journal={Adv. Math.},
   volume={311},
   date={2017},
   pages={796--832},
   issn={0001-8708},
   review={\MR{3628231}},
}

\bib{Garrett}{article}{
    author={Garret, Paul},
    title={Harmonic analysis on spheres, II},
    eprint={http://www-users.math.umn.edu/~garrett/m/mfms/notes_c/spheres_II.pdf},
}

\bib{guillemin_golubitsky}{book}{
	author={Golubitsky, M.},
	author={Guillemin, V.},
	title={Stable mappings and their singularities},
	note={Graduate Texts in Mathematics, Vol. 14},
	publisher={Springer-Verlag, New York-Heidelberg},
	date={1973},
	pages={x+209},
	review={\MR{0341518}},
}
\bib{groemer}{book}{
	author={Groemer, H.},
	title={Geometric applications of Fourier series and spherical harmonics},
	series={Encyclopedia of Mathematics and its Applications},
	volume={61},
	publisher={Cambridge University Press, Cambridge},
	date={1996},
	pages={xii+329},
}

\bib{gu}{article}{
	author={Gu, \v{C}ao-Hao},
	title={Imbedding of a Finsler space in a Minkowski space},
	language={Chinese, with Russian summary},
	journal={Acta Math. Sinica},
	volume={6},
	date={1956},
	pages={215--232},
	issn={0583-1431},
	review={\MR{100889}},
}

\bib{Hormander:ALPDO}{book}{
   author={H\"{o}rmander, Lars},
   title={The analysis of linear partial differential operators. I},
   series={Classics in Mathematics},
   note={Distribution theory and Fourier analysis;
   Reprint of the second (1990) edition [Springer, Berlin;  MR1065993
   (91m:35001a)]},
   publisher={Springer-Verlag, Berlin},
   date={2003},
   pages={x+440},
   isbn={3-540-00662-1},
   review={\MR{1996773}},
   doi={10.1007/978-3-642-61497-2},
}

\bib{ivanov}{article}{
	author={Ivanov, Sergei},
	title={Monochromatic Finsler surfaces and a local ellipsoid
		characterization},
	journal={Proc. Amer. Math. Soc.},
	volume={146},
	date={2018},
	number={4},
	pages={1741--1755},
}

\bib{klain_short}{article}{
   author={Klain, Daniel A.},
   title={A short proof of Hadwiger's characterization theorem},
   journal={Mathematika},
   volume={42},
   date={1995},
   number={2},
   pages={329--339},
   issn={0025-5793},
   review={\MR{1376731}},
}

\bib{Ludwig:Areas}{article}{
   author={Ludwig, Monika},
   title={Minkowski areas and valuations},
   journal={J. Differential Geom.},
   volume={86},
   date={2010},
   number={1},
   pages={133--161},
   issn={0022-040X},
   review={\MR{2772547}},
}

\bib{rubin}{article}{
	author={Rubin, Boris},
	title={Inversion and characterization of the hemispherical transform},
	journal={J. Anal. Math.},
	volume={77},
	date={1999},
	pages={105--128},
}

\bib{Schneider:BM}{book}{
   author={Schneider, Rolf},
   title={Convex bodies: the Brunn-Minkowski theory},
   series={Encyclopedia of Mathematics and its Applications},
   volume={151},
   edition={Second expanded edition},
   publisher={Cambridge University Press, Cambridge},
   date={2014},
   pages={xxii+736},
   isbn={978-1-107-60101-7},
   review={\MR{3155183}},
}

\bib{SchneiderWieacker:IGMS}{article}{
   author={Schneider, Rolf},
   author={Wieacker, John Andr\'{e}},
   title={Integral geometry in Minkowski spaces},
   journal={Adv. Math.},
   volume={129},
   date={1997},
   number={2},
   pages={222--260},
   issn={0001-8708},
   review={\MR{1462734}},
}

\bib{shen}{article}{
   author={Shen, Zhongmin},
   title={On Finsler geometry of submanifolds},
   journal={Math. Ann.},
   volume={311},
   date={1998},
   number={3},
   pages={549--576},
   issn={0025-5831},
   review={\MR{1637939}},
}

\bib{SolanesWannerer:Spheres}{article}{
    author={Solanes, Gil},
    author={Wannerer, Thomas},
    title={Integral geometry of exceptional spheres},
    journal={J. Differential Geom.},
    status={in press},
    eprint={arXiv:1708.05861[math.DG]},
}

\bib{spivak4}{book}{
   author={Spivak, Michael},
   title={A comprehensive introduction to differential geometry. Vol. IV},
   edition={2},
   publisher={Publish or Perish, Inc., Wilmington, Del.},
   date={1979},
   pages={viii+561},
   isbn={0-914098-83-7},
   review={\MR{532833}},
}

\bib{spivak5}{book}{
	author={Spivak, Michael},
	title={A comprehensive introduction to differential geometry. Vol. V},
	edition={2},
	publisher={Publish or Perish, Inc., Wilmington, Del.},
	date={1979},
	pages={viii+661},
	isbn={0-914098-83-7},
	review={\MR{532834}},
}

\bib{Thompson:MG}{book}{
   author={Thompson, A. C.},
   title={Minkowski geometry},
   series={Encyclopedia of Mathematics and its Applications},
   volume={63},
   publisher={Cambridge University Press, Cambridge},
   date={1996},
   pages={xvi+346},
   isbn={0-521-40472-X},
   review={\MR{1406315}},
}

\bib{weyl}{article}{
	author={Weyl, Hermann},
	title={On the Volume of Tubes},
	journal={Amer. J. Math.},
	volume={61},
	date={1939},
	number={2},
	pages={461--472},
}

\end{biblist}
\end{bibdiv}
\end{document}